\documentclass[preprint,10pt,3p]{elsarticle}
\usepackage{amsmath,amsthm,amsfonts,amssymb}
\usepackage{xcolor}
\usepackage{algorithm}
\usepackage{multirow}
\usepackage{multicol}
\usepackage{booktabs}% for better rules in the table
\usepackage{algorithm}               %format of the algorithm
\usepackage{algorithmic}
\usepackage{comment}
\usepackage{threeparttable}

\usepackage{relsize}

\usepackage{calligra}
\usepackage{carolmin}
\usepackage[T1]{fontenc}

\usepackage{auncial}
\usepackage[B1]{fontenc}

\usepackage{mathrsfs}
\usepackage{sectsty}
\definecolor{astral}{RGB}{46,116,181}
\sectionfont{\color{astral}}
\newtheorem{theorem}{Theorem}[section]
\newtheorem{lemma}[theorem]{Lemma}
\newtheorem{corollary}[theorem]{Corollary}

\theoremstyle{definition}
\newtheorem{definition}[theorem]{Definition}
\newtheorem{example}[theorem]{Example}

\usepackage{hyperref}
\usepackage{subfigure}

\usepackage{tikz}
\pgfdeclarelayer{bg}
\pgfsetlayers{bg,main}

\usepackage{tikz,xcolor}
\definecolor{lime}{HTML}{A6CE39}
\definecolor{lightblue}{rgb}{0.0, 0.0, 0.5}
\DeclareRobustCommand{\orcidicon}{%
	\begin{tikzpicture}
	\draw[lime, fill=lime] (0,0)
	circle [radius=0.16]
	node[white] {{\fontfamily{qag}\selectfont \tiny ID}};
	\draw[white, fill=white] (-0.0625,0.095)
	circle [radius=0.007];
	\end{tikzpicture}
	\hspace{-2mm}
}
\foreach \x in {A, ..., Z}{%
	\expandafter\xdef\csname orcid\x\endcsname{\noexpand\href{https://orcid.org/\csname orcidauthor\x\endcsname}{\noexpand\orcidicon}}
}

\usepackage{wasysym}

\usepackage{tikz}
\pgfdeclarelayer{bg}
\pgfsetlayers{bg,main}

\usepackage{tikz,xcolor}
\definecolor{lime}{HTML}{A6CE39}
\definecolor{lightblue}{rgb}{0.0, 0.0, 0.5}
\DeclareRobustCommand{\orcidicon}{%
	\begin{tikzpicture}
	\draw[lime, fill=lime] (0,0)
	circle [radius=0.16]
	node[white] {{\fontfamily{qag}\selectfont \tiny ID}};
	\draw[white, fill=white] (-0.0625,0.095)
	circle [radius=0.007];
	\end{tikzpicture}
	\hspace{-2mm}

}
\newcommand{\rra}[1]{\mathrm{rshrank}(#1)}  % rank

\newcommand{\bigrho}{\raisebox{-.0\baselineskip}{\Large\ensuremath{\rho}}}
\newcommand{\bigAlpha}{\raisebox{-.0\baselineskip}{\large\ensuremath{\alpha}}}
\newcommand{\bigbBeta}{\raisebox{-.0\baselineskip}{\large\ensuremath{\beta}}}
\newcommand{\bigDrho}{\bigrho_2}
\newcommand{\bigBrho}{\bigrho_{_{B,*}}}
\newcommand{\bigCrho}{\bigrho_{_{*,C}}}
\newcommand{\bigBCrho}{\bigrho_{_{B,C}}}
\newcommand{\bigdelta}{\raisebox{-.1\baselineskip}{\Large\ensuremath{\delta}}}
\newcommand{\bigDdelta}{\bigdelta_2}
\newcommand{\bigBdelta}{\bigdelta_{_{B,*}}}
\newcommand{\bigCdelta}{\bigdelta_{_{*,C}}}
\newcommand{\bigBCdelta}{\bigdelta_{_{B,C}}}

\numberwithin{equation}{section}

\begin{document}
\begin{frontmatter}
\title{%$M$-product based
Perturbation analysis of tensor $(\mathcal{B},\mathcal{C})$-inverse via Einstein product
}
\author{Daochang Zhang$^{a,1}$, Jingqian Li$^{a,2}$, Dijana Mosi\'c$^{b,3}$, Predrag S. Stanimirovi\' c$^{b,4}$}
\address{
 $^{a}$ College of Sciences, Northeast Electric Power University, Jilin, P.R. China.\\
$^{b}$ Faculty of Sciences and Mathematics, University of Ni\v s, 18000 Ni\v s, Serbia.\\
 \textit{E-mail}$^{1}$: \texttt{daochangzhang@126.com}\\
\textit{E-mail}$^{2}$: \texttt{jingqianli85@163.com}\\
\textit{E-mail}$^{3}$: \texttt{dijana@pmf.ni.ac.rs}\\
\textit{E-mail}$^{4}$: \texttt{pecko@pmf.ni.ac.rs}
}

\begin{abstract}
We investigate the influence of a relatively small perturbation on various generalized inverses functions or quantities derived from a tensor $\mathcal{A}$.
When a small tensor perturbation \(\mathcal{E}\) is introduced, it becomes challenging to analyze generalized inverses of the perturbed tensor \( \mathcal{D} =\mathcal{A}+\mathcal{E}\) and to determine how this perturbation affects a generalized inverse of $\mathcal{A}$.
Our main goal is to understand the relationship between $\mathcal{D}^\Game$ and \( \mathcal{A}^\Game \), where $(\cdot)^\Game$ denotes a specific generalized inverse or a class of generalized inverses.
 In particular, classes of tensor inner, outer, and $(\mathcal{B},\mathcal{C})$ inverses are considered.

\end{abstract}

\begin{keyword}
$(\mathcal{B},\mathcal{C})$-inverse\sep Tensor\sep Einstein Product \sep Perturbation

\MSC[2020] 15A69, 15A10, 15A09, 53A45.
\end{keyword}
\end{frontmatter}

\section{Introduction}

Tensors, as powerful higher-order generalizations of matrices, play a significant role in linear least squares problems, statistical applications, geology, computer vision, and in physics where they effectively describe systems with multiple interactions.
They are also integral to engineering for analyzing various physical phenomena, and data processing \cite{SJMAA344,TMS871,SEG762,FM122,SR512,MRI296}.
Fundamental structures and properties of tensors have been thoroughly explored by researchers, highlighting their significance and versatility across various applications \cite{NLAA293,JAMC682,LAIA4393,SR512,LMA673,UOTW221,AEOTM182}.

Perturbation in tensor calculus refers to examining how a small change in a given tensor affects functions or quantities derived from it.
If we perturb a tensor \(\mathcal{A} \) by a small tensor \(\mathcal{E} \), we consider \( \mathcal{D} =\mathcal{A}+\mathcal{E}\) and analyze how this perturbation affects to a generalized inverse of $\mathcal{A}$.
More precisely, we want to understand the relationship between \( \mathcal{D}^\Game \) and \( \mathcal{A}^\Game \), where $(\cdot)^\Game$ denotes a generalized inverse under consideration.

The following topics have been investigated in relation to the perturbation analysis.

Continuity of generalized inverses.
Under certain conditions, the generalized inverse can vary continuously concerning small perturbations in the tensor.
This continuity implies that small changes in \( \mathcal{A} \) result in small changes in \( \mathcal{A}^\Game \).

Perturbation results.
If \(\mathcal{E}\) is sufficiently small, there are bounds that relate the distance between \(\mathcal{A}^\Game\) and \((\mathcal{A} + \mathcal{E})^\Game\) in terms of the norm of the perturbation \(\mathcal{E}\).

Understanding the effects of perturbations is essential in numerical analysis, data fitting, and control theory, where accurate calculations of generalized inverses can lead to significant differences in outcomes.

Operations on tensors, such as tensor addition, multiplication, and contraction, extend those defined for matrices to accommodate these higher dimensions.
Tensor operations include the Kronecker product, Hadamard product, T-product, and Einstein product \cite{PUPP071,SR512}.
In particular, the research on the Einstein product due to its notable applications in physics.

As the operation for tensors, the Einstein product $*_n $ was introduced by Einstein \cite{PUPP071} and expressed as follows
\begin{equation*}
(\mathcal{A}*_n\mathcal{B})_{i_1\cdot\cdot\cdot i_kj_1\cdot\cdot\cdot j_m}=\sum_{h_1\cdot\cdot\cdot h_{n^\prime}}a_{i_1\cdot\cdot\cdot i_kh_1\cdot\cdot\cdot h_{n}}b_{h_1\cdot\cdot\cdot h_{n} j_1\cdot\cdot\cdot j_m}.
\end{equation*}
In this formulation, tensors $\mathcal{A}$ and $\mathcal{B}$ satisfy $\mathcal{A}\in\mathbb{C}^{I_1\times \cdot\cdot\cdot \times I_k\times H_1\times \cdot\cdot\cdot \times H_{n}}$ and
$\mathcal{B}\in\mathbb{C}^{H_1\times \cdot\cdot\cdot \times H_{n}\times J_1\times \cdot\cdot\cdot \times J_m}$.

Generalized inverse theory occupies a central position in tensor algebra. Scholars have studied various tensor inverses including the Moore-Penrose inverse, Drazin inverse, group inverse, and $(\mathcal{B},\mathcal{C})$-inverse. The $(\mathcal{B},\mathcal{C})$-inverse, as an extension of outer inverses, demonstrates unique advantages in solving Poisson equations and color image denoising problems due to its specific algebraic conditions. In 2018, Stanimirov\' c et al. formally defined the $(\mathcal{B},\mathcal{C})$-inverse under the Einstein product \cite{LMA685}.
In 2020, Ke et al. analyzed one-sided $(\mathcal{B},\mathcal{C})$-inverses over rings \cite{FOSIR343}.
Drazin gave the exploration of the weighted $(\mathcal{B},\mathcal{C})$-inverse properties in semigroups \cite{CIA481}.
In 2021, Zhu {et al.} investigated $(\mathcal{B},\mathcal{C})$-inverse in rings \cite{RRACE1153}.
Wu { et al.} represented the $(\mathcal{B},\mathcal{C})$-inverse and $(\mathcal{C},\mathcal{B})$-inverse, through the group inverse \cite{CIA492}.
Furthermore, Mosi\'c { et al.} gave the algorithm for computing the weighted tensor $(\mathcal{B},\mathcal{C})$-inverse \cite{JCAM3883}.

The Moore-Penrose inverse $\mathcal{X}\in \mathbb{C}^{T(t) \times S(s)}$ of any $\mathcal{A}\in \mathbb{C}^{S(s) \times T(t)}$, denoted as $\mathcal{A}^\dag$, satisfies four specific conditions \cite{LMA643}:
\begin{center}
$(1^T) \ \mathcal{A} \, {*_t} \, \mathcal{X} \, {*_s} \, \mathcal{A} = \mathcal{A}$,\
$(2^T)\  \mathcal{X} \, {*_s} \, \mathcal{A} \, {*_t} \, \mathcal{X} = \mathcal{X}$,\
$(3^T) \ (\mathcal{A} \, {*_t} \, \mathcal{X})^* = \mathcal{A} \, {*_t} \, \mathcal{X}$,\
$(4^T)\ (\mathcal{X} \, {*_s} \, \mathcal{A})^* = \mathcal{X} \, {*_s} \, \mathcal{A}$.
\end{center}

If \( \mathcal{X} \) fulfills condition \((2^T)\), it is an outer inverse of \( \mathcal{A} \).
The symbol \( \mathcal{A}_{\aunclfamily{\bf R}(\mathcal{B}), *}^{(2)} \) (resp. \( \mathcal{A}_{*, \aunclfamily{\bf N}(\mathcal{C})}^{(2)} \)) denotes an outer inverse \(\mathcal{X}\) of \(\mathcal{A}\)
which has a prescribed range \( \aunclfamily{\bf R}(\mathcal{X}) = \aunclfamily{\bf R}(\mathcal{B}) \) (resp. null space \(\aunclfamily{\bf N}(\mathcal{X}) = \aunclfamily{\bf N}(\mathcal{C}) \)).
Furthermore, \( \mathcal{A}_{\aunclfamily{\bf R}(\mathcal{B}), \aunclfamily{\bf N}(\mathcal{C})}^{(2)} \) stands for an arbitrary outer inverse \( \mathcal{X} \) of \( \mathcal{A} \) fulfilling constraints \( \aunclfamily{\bf R}(\mathcal{X}) = \aunclfamily{\bf R}(\mathcal{B}) \) and \( \aunclfamily{\bf N}(\mathcal{X}) = \aunclfamily{\bf N}(\mathcal{C}) \).

Perturbation theory for the Drazin inverse of matrices was initiated by Wei {et al.} in 1997 \cite{LANIA2582}.
Additionally, the relevant research on the Drazin inverse can be found in \cite{ZMT-LAA,DZYZDK,JCAMDYD}.
Subsequent studies expanded the perturbation analysis to various generalized inverses in tensor domain.
In 2019, Ma {et al.} investigated perturbations of the Moore-Penrose inverse for tensors under the Einstein product \cite{CAM383}.
Du {et al.} studied additive perturbations for the tensor core inverse under the Einstein product and provided an upper bound for the perturbation \cite{FPTF333}.
In 2021, Mosi\' c {et al.} investigated the MPCEP inverse perturbations of matrices \cite{JAMC673}.
In addition, Cui {et al.} explored the Moore-Penrose inverse perturbations of the tensors under the T-product \cite{FTPBF352}.
Despite its significance, perturbation analysis for the $(\mathcal{B}, \mathcal{C})$-inverse has not yet been sufficiently developed.
Addressing this gap could lead to significant advancements in the field.

For convenience, we restate the notation
\begin{center}
     $L(k)=L_1\times \cdots \times L_k$, %=\{L_1,\ldots, L_s|\ 1\leq l_j \leq L_j, j=1,\ldots,s\},$
\end{center}
where $L_1,\ldots,L_k$ are positive integers.
The tensor notation $\mathcal{A} =(a_{n_1,\ldots ,n_k})_{1\leq n_j\leq N_j}$ will be simplified by $\mathcal{A} = (a_{l(k)})$, where $j = 1,\ldots,k$.

The motivation of this paper is to investigate additive perturbations of the $(\mathcal{B},\mathcal{C})$-inverse of the tensors under the Einstein product.
The perturbation norm utilizes the consistency norm, which includes both the Frobenius norm and the spectral norm.
The Hermitian inner product of tensors \( \mathcal{A}, \mathcal{B} \in \mathbb{C}^{N(k)} \) is given in \cite{SJMAA344} by
\[
(\mathcal{A}, \mathcal{B}) = \sum_{l_1=1}^{L_1} \sum_{l_2=1}^{L_2} \cdots \sum_{l_k=1}^{L_k}  a_{l_1 l_2 \ldots l_k} \overline{b_{l_1 l_2 \ldots l_k}}.
\]
%In the case \( (\mathcal{A}, \mathcal{B}) = 0 \), \( \mathcal{A} \) is orthogonal to \( \mathcal{B} \).
The Frobenius norm of \( \mathcal{A} \) is defined as \( \Vert\mathcal{A}\Vert_F = \sqrt{(\mathcal{A}, \mathcal{A})} \).

If tensors \( \mathcal{A} \in \mathbb{C}^{T(t) \times T(t)} \) and \( \mathcal{X} \in \mathbb{C}^{T(t)} \) and a complex number \( \lambda \) fulfill
\[
\mathcal{A} *_t \mathcal{X} = \lambda \mathcal{X},
\]
the number \( \lambda \) is an eigenvalue of \( \mathcal{A} \) and \( \mathcal{X} \) is the eigentensor related to \( \lambda \) \cite{LMA673}.

The spectral norm of \( \mathcal{A}\in \mathbb{C}^{S(s) \times T(t)} \) is given by
\[
\|\mathcal{A}\|_2 = \sqrt{\lambda_{\max}(\mathcal{A}^* *_t \mathcal{A})} = \mu_1(\mathcal{A}),
\]
where \( \lambda_{\max}(\mathcal{A}^* *_t \mathcal{A}) \) is the largest eigenvalue of \( \mathcal{A}^* *_t \mathcal{A} \), and \( \mu_1(\mathcal{A}) \) is the largest singular value of \( \mathcal{A} \).

In \cite{LMA685}, the tensor reshaping operation transform was defined as
\(\text{rsh}\), and implemented by means of the standard MATLAB function \(\textit{reshape}\)
$\text{rsh}: \mathbb{C}^{S(s) \times T(t)} \mapsto \mathbb{C}^{\mathfrak{S} \times \mathfrak{T}},$
such that $\mathfrak{S} = \prod_{i=1}^{s} S_i$ and $\mathfrak{T} = \prod_{i=1}^{t} T_i$, as follows
\[
\text{rsh}(\mathcal{A}) = A = \text{reshape}(\mathcal{A}, \mathfrak{S}, \mathfrak{T}),\ \text{for}\ \mathcal{A} \in \mathbb{C}^{S(s)\times T(t)}\ \text{and}\
A \in \mathbb{C}^{\mathfrak{S} \times \mathfrak{T}}.
\]

Then the tensor {\em rshrank} (reshape rank) was defined in \cite{LMA685} following the following algorithm.
For positive integers $s,t$ and $S_1, \ldots, S_s, T_1, \ldots, T_t$, given tensor $\mathcal{A} \in \mathbb{C}^{S(s) \times T(t)}$, consider corresponding matrix $A = \textrm{rsh}(\mathcal{A}) \in \mathbb{C}^{\mathfrak{S} \times \mathfrak{T}}$.
If there exists $E \in \mathbb{C}^{\mathfrak{S} \times \mathfrak{S}}$ and $P \in \mathbb{C}^{\mathfrak{T} \times \mathfrak{T}}$ satisfying
\[ EAP = \begin{bmatrix} I_r & K \\ O & O \end{bmatrix}, \quad K \in \mathbb{C}^{r \times (\mathfrak{T}-r)}, \]
then $\rra{\mathcal{A}} = r$.

\smallskip
The research presented in this paper addresses the following perturbation problems.

{\bf Problem 1.}
Given a small perturbation $\mathcal{E}$ in $\mathcal{A}$, satisfying $\|\mathcal{A}^{(1)}\|\|\mathcal{E}\| < 1$.
Research question focuses on understanding how $\mathcal{E}$ affects the computation of $\mathcal{A}^{(1)}$.
Specifically, research task is to investigate relationships between $ (\mathcal{A} + \mathcal{E})^{(1)}$ and $\mathcal{A}^{(1)}$ defined by the multiplicative factors denoted as
$\bigrho =(\mathcal{I} + \mathcal{E}*_t \mathcal{A}^{(1)})^{-1}$ and $\bigdelta =(\mathcal{I} + \mathcal{A}^{(1)}*_s\mathcal{E})^{-1}$.
Essentially, research goal is to explore the relationship of the form $ (\mathcal{A} + \mathcal{E})^{(1)}=\mathcal{A}^{(1)} *_s\bigrho=\bigdelta *_t\mathcal{A}^{(1)}$ under specified conditions.

{\bf Problem 2.}
Given a small perturbation $\mathcal{E}$ in $\mathcal{A}$, satisfying the condition $\|\mathcal{E}*_t\mathcal{A}^{(2)}\|< 1$ along with some specified requirements.
Research question seeks to understand how $\mathcal{E}$ affects the computation of $\mathcal{A}^{(2)}$.
More specifically, the research task is to investigate relationship between $ (\mathcal{A} + \mathcal{E})^{(2)}$ and $\mathcal{A}^{(2)}$ defined by the multiplicative factors
$\bigDdelta   =(\mathcal{I} + \mathcal{A}^{(2)}*_s\mathcal{E})^{-1}$ and $\bigDrho  =(\mathcal{I} + \mathcal{E}*_t \mathcal{A}^{(2)})^{-1}$.
The main goal is to explore the relationship of the form $ (\mathcal{A} + \mathcal{E})^{(2)}=\mathcal{A}^{(2)} *_s\bigDrho =\bigDdelta   *_t\mathcal{A}^{(2)}$ under specified conditions.

{\bf Problem 3.} This research problem aims to generalize both {\bf Problem 1} and {\bf Problem 2} regarding outer inverses with prescribed ranges and/or null spaces.

\smallskip
The research is organized in the following sections.
Section 2 introduces the key lemmas and essential definitions.
Section 3 outlines equivalent conditions necessary for \( \mathcal{H} = \mathcal{A}^{(1)}*_s \bigrho \) to serve as an inner inverse of \( \mathcal{D} = \mathcal{A} + \mathcal{E} \), and additionally derives the inner inverse elated to the perturbation of the tensor.
Section 4 develops the perturbation computation and the error analysis for the $(\mathcal{B},\mathcal{C})$-inverse under the Einstein product through operational formulas, with illustrative examples provided for each main result.

\section{Key lemma}

This section presents the main lemmas and definitions utilized in this paper.
Stanimirovi\'c {et al.} defined the range and null space of tensors in \cite{LMA685} as follows.
An appropriate zero tensor is marked by \(\mathcal{O}\).

\begin{definition}{\rm \cite{LMA685}}
The null space and range of \(\mathcal{A} \in \mathbb{C}^{S(s) \times T(t)}\), respectively, are introduced by
\[
\aunclfamily{\bf N}(\mathcal{A}) = \left\{ \mathcal{Z} = (z_{q_1, \ldots, q_q}) \in \mathbb{C}^{T(t)} \mid \mathcal{A} *_t \mathcal{Z} = \mathcal{O} \in \mathbb{C}^{S(s)} \right\} \subseteq \mathbb{C}^{T(t)}.
\]
and
\[
\aunclfamily{\bf R}(\mathcal{A}) = \left\{ \mathcal{A} *_t \mathcal{Z} \mid \mathcal{Z} = (z_{q_1, \ldots, q_q}) \in \mathbb{C}^{T(t)} \right\} \subseteq \mathbb{C}^{S(s)}
\]
\end{definition}

Subsequent definitions for tensors restate the notions of \((\mathcal{B})\)-inverse, \((\mathcal{C})\)-inverse and \( (\mathcal{B}, \mathcal{C}) \)-inverse.

\begin{definition}\cite{LMA685}
For \( \mathcal{B} \in \mathbb{C}^{T(t) \times U(u)} \), \( \mathcal{C} \in \mathbb{C}^{V(v) \times S(s)} \) and \( \mathcal{A} \in \mathbb{C}^{S(s) \times T(t)} \), \( \mathcal{X} \in \mathbb{C}^{T(t) \times S(s)} \) is called:
\begin{itemize}
\item[-] a \((\mathcal{B})\)-inverse of \(\mathcal{A}\) if it satisfies equations \\
$(\text{1TB}) \ \mathcal{X} *_s \mathcal{A} *_t \mathcal{B} = \mathcal{B}  $ and
    $(\text{2TB})\ \mathcal{X} \in \mathcal{B} *_u \mathbb{C}^{U(u) \times T(t)} *_t \mathcal{X};$

\item[-] a \((\mathcal{C})\)-inverse of \(\mathcal{A}\) if it satisfies equations \\
$
    (\text{1TC}) \ \mathcal{C} *_s\mathcal{A} *_t \mathcal{X} =\mathcal{C}$ and
    $(\text{2TC})\ \mathcal{X} \in \mathcal{X} *_s \mathbb{C}^{S(s) \times V(v)} *_v\mathcal{C};
$

\item[-] the \( (\mathcal{B}, \mathcal{C}) \)-inverse of \( \mathcal{A}\) if fulfills
\begin{center}
 $(1T) \quad  \mathcal{C} *_s \mathcal{A} *_t \mathcal{X} = \mathcal{C},\ \mathcal{X} *_s \mathcal{A} *_t \mathcal{B} = \mathcal{B}$,\\
 $(2T) \quad  \mathcal{X} \in \left( \mathcal{X} *_s \mathbb{C}^{S(s) \times V(v)} *_v \mathcal{C} \right) \cap \left( \mathcal{B} *_u \mathbb{C}^{U(u) \times T(t)} *_t \mathcal{X} \right)$.
\end{center}
\end{itemize}
\end{definition}

Lemma \ref{RBinverse} establishes the equivalence between an outer inverse of $\mathcal{A}$ with a prescribed range $\aunclfamily{\bf R}(\mathcal{B})$ (resp. null space $\aunclfamily{\bf N}(\mathcal{C})$) and the corresponding $(\mathcal{B})$-inverse (resp. $(\mathcal{C})$-inverse).
\begin{lemma}\label{RBinverse}{\rm \cite{LMA685}}
Let \(\mathcal{A} \in \mathbb{C}^{S(s) \times T(t)} \), \(\mathcal{X} \in \mathbb{C}^{T(t) \times S(s)} \), \( \mathcal{B} \in \mathbb{C}^{T(t) \times U(u)} \), \( C \in \mathbb{C}^{V(v) \times S(s)} \).
The tensor $\mathcal{A}_{\aunclfamily{\bf R}(\mathcal{B}),*}^{(2)}$ is an outer inverse of $\mathcal{A}$ with predefined range $\aunclfamily{\bf R}(\mathcal{B})$ and $\mathcal{A}_{*,\aunclfamily{\bf N}(\mathcal{C})}^{(2)}$ denotes the outer inverse of $\mathcal{A}$ with prescribed null space $\aunclfamily{\bf N}(\mathcal{C})$.
Then
\[
\mathcal{X} := \mathcal{A}_{\aunclfamily{\bf R}(\mathcal{B}),*}^{(2)} \quad \Longleftrightarrow \quad \mathcal{X} \text{ is } (\mathcal{B})\text{-inverse of }\mathcal{A};
\]
and
\[
\mathcal{X} := \mathcal{A}_{*,\aunclfamily{\bf N}(\mathcal{C})}^{(2)} \quad \Longleftrightarrow \quad \mathcal{X} \text{ is } (C)\text{-inverse of }\mathcal{A}.
\]
\end{lemma}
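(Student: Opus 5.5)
We treat the two equivalences separately; the second is the mirror image of the first under swapping left and right multiplication. For the first, we must show: $\mathcal{X}$ satisfies $\mathcal{X}*_s\mathcal{A}*_t\mathcal{X}=\mathcal{X}$ and $\aunclfamily{\bf R}(\mathcal{X})=\aunclfamily{\bf R}(\mathcal{B})$ if and only if $\mathcal{X}$ satisfies (1TB) and (2TB). A basic tool, used throughout, is the factorization of ranges: $\aunclfamily{\bf R}(\mathcal{X})\subseteq\aunclfamily{\bf R}(\mathcal{B})$ holds exactly when $\mathcal{X}=\mathcal{B}*_u\mathcal{W}$ for some tensor $\mathcal{W}$. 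This follows columnwise from the definition of $\aunclfamily{\bf R}$. One can also pass through $\text{rsh}$, which turns Einstein products into ordinary matrix products.

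($\Rightarrow$) Assume $\mathcal{X}$ is an outer inverse with $\aunclfamily{\bf R}(\mathcal{X})=\aunclfamily{\bf R}(\mathcal{B})$.
\begin{itemize}
\item Since $\aunclfamily{\bf R}(\mathcal{B})\subseteq\aunclfamily{\bf R}(\mathcal{X})$, we can write $\mathcal{B}=\mathcal{X}*_s\mathcal{Y}$. Then $\mathcal{X}*_s\mathcal{A}*_t\mathcal{B}=\mathcal{X}*_s\mathcal{A}*_t\mathcal{X}*_s\mathcal{Y}=\mathcal{X}*_s\mathcal{Y}=\mathcal{B}$, which is (1TB).
\item Since $\aunclfamily{\bf R}(\mathcal{X})\subseteq\aunclfamily{\bf R}(\mathcal{B})$, we can write $\mathcal{X}=\mathcal{B}*_u\mathcal{W}$. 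Then $\mathcal{X}=\mathcal{X}*_s\mathcal{A}*_t\mathcal{X}=\mathcal{B}*_u(\mathcal{W}*_s\mathcal{A})*_t\mathcal{X}$ with $\mathcal{W}*_s\mathcal{A}\in\mathbb{C}^{U(u)\times T(t)}$, which is (2TB).
\end{itemize}

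($\Leftarrow$) Assume (1TB) and (2TB), and write $\mathcal{X}=\mathcal{B}*_u\mathcal{Z}*_t\mathcal{X}$.
\begin{itemize}
\item By (1TB), $\mathcal{X}*_s\mathcal{A}*_t\mathcal{X}=(\mathcal{X}*_s\mathcal{A}*_t\mathcal{B})*_u\mathcal{Z}*_t\mathcal{X}=\mathcal{B}*_u\mathcal{Z}*_t\mathcal{X}=\mathcal{X}$, so $\mathcal{X}$ is an outer inverse.
\item From (2TB) directly, $\aunclfamily{\bf R}(\mathcal{X})\subseteq\aunclfamily{\bf R}(\mathcal{B})$.
\item From (1TB), $\mathcal{B}=\mathcal{X}*_s(\mathcal{A}*_t\mathcal{B})$, so $\aunclfamily{\bf R}(\mathcal{B})\subseteq\aunclfamily{\bf R}(\mathcal{X})$. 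Hence the ranges are equal.
\end{itemize}

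For the null-space statement the argument is dual, with one extra translation step: null-space inclusion must be converted into a left factorization. Specifically, $\aunclfamily{\bf N}(\mathcal{C})\subseteq\aunclfamily{\bf N}(\mathcal{X})$ holds iff $\mathcal{X}=\mathcal{W}*_v\mathcal{C}$ for some $\mathcal{W}$. This is the main technical point. We would prove it by passing to $\text{rsh}$, where it becomes the matrix fact $N(C)\subseteq N(X)\iff R(X^*)\subseteq R(C^*)\iff X=WC$, and then reshaping back.

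With this in hand, the steps run as follows.
\begin{itemize}
\item ($\Rightarrow$) An outer inverse with $\aunclfamily{\bf N}(\mathcal{X})=\aunclfamily{\bf N}(\mathcal{C})$ gives $\mathcal{C}=\mathcal{Y}*_s\mathcal{X}$, hence $\mathcal{C}*_s\mathcal{A}*_t\mathcal{X}=\mathcal{Y}*_s\mathcal{X}*_s\mathcal{A}*_t\mathcal{X}=\mathcal{C}$, which is (1TC).
\item It also gives $\mathcal{X}=\mathcal{W}*_v\mathcal{C}$, hence $\mathcal{X}=\mathcal{X}*_s(\mathcal{A}*_t\mathcal{W})*_v\mathcal{C}$, which is (2TC).
\item ($\Leftarrow$) Conversely, (2TC) with (1TC) gives $\mathcal{X}*_s\mathcal{A}*_t\mathcal{X}=\mathcal{X}$ exactly as before.
\item (2TC) gives $\aunclfamily{\bf N}(\mathcal{C})\subseteq\aunclfamily{\bf N}(\mathcal{X})$.
\item $\mathcal{C}=(\mathcal{C}*_s\mathcal{A})*_t\mathcal{X}$ gives $\aunclfamily{\bf N}(\mathcal{X})\subseteq\aunclfamily{\bf N}(\mathcal{C})$.
\end{itemize}
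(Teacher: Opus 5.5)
Your argument is correct. This paper does not prove the lemma itself; it is quoted from the earlier work of Stanimirovi\'c et al. Your proof is the standard one: convert the range inclusion and the null-space inclusion into right and left factorizations, then combine them with $\mathcal{X}*_s\mathcal{A}*_t\mathcal{X}=\mathcal{X}$ and with (1TB) or (1TC).

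There is one index slip in the null-space part. The inclusion of the null space of $\mathcal{X}$ in that of $\mathcal{C}$ gives $\mathcal{C}=\mathcal{Y}*_t\mathcal{X}$ with $\mathcal{Y}\in\mathbb{C}^{V(v)\times T(t)}$, not $\mathcal{Y}*_s\mathcal{X}$. With that correction, $\mathcal{C}*_s\mathcal{A}*_t\mathcal{X}=\mathcal{Y}*_t(\mathcal{X}*_s\mathcal{A}*_t\mathcal{X})=\mathcal{C}$ goes through unchanged.

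The factorization you call the main technical point also follows without reshaping. Take any inner inverse $\mathcal{C}^{(1)}\in\mathbb{C}^{S(s)\times V(v)}$, for example the Moore--Penrose inverse. Then $\mathcal{C}*_s(\mathcal{I}-\mathcal{C}^{(1)}*_v\mathcal{C})=\mathcal{O}$, so the null-space inclusion gives $\mathcal{X}=\mathcal{X}*_s\mathcal{C}^{(1)}*_v\mathcal{C}$ directly. Dually, a range inclusion gives $\mathcal{X}=\mathcal{B}*_u\mathcal{B}^{(1)}*_t\mathcal{X}$.
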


Lemma \ref{ABLeqB1A1} gives the conditions for the reverse order law as they apply to the inner inverse of tensors.
\begin{lemma}\label{ABLeqB1A1}{\rm \cite{LMA652}}
Let $\mathcal{P}\in\mathbb{C}^{S(s)\times T(t)}$ and $\mathcal{Q}\in\mathbb{C}^{T(t)\times U(u)}$.
Then
$(\mathcal{P}*_t\mathcal{Q})^{(1)}=\mathcal{Q}^{(1)}*_t\mathcal{P}^{(1)}$ holds true if and only if
\begin{equation}\label{0.1}
(\mathcal{P}^{(1)}*_s\mathcal{P}*_t\mathcal{Q}*_u\mathcal{Q}^{(1)})^2=\mathcal{P}^{(1)}*_s\mathcal{P}*_t\mathcal{Q}*_u\mathcal{Q}^{(1)}.
\end{equation}
\end{lemma}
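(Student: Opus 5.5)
The plan is to move the statement to matrices and quote the known matrix result, since the reverse order law for inner inverses is classical there (Werner's criterion). Set $P=\text{rsh}(\mathcal{P})\in\mathbb{C}^{\mathfrak{S}\times\mathfrak{T}}$ and $Q=\text{rsh}(\mathcal{Q})\in\mathbb{C}^{\mathfrak{T}\times\mathfrak{U}}$. The reshaping map is a bijection that carries the Einstein product to ordinary matrix multiplication:
\[
\text{rsh}(\mathcal{A}*_t\mathcal{B})=\text{rsh}(\mathcal{A})\,\text{rsh}(\mathcal{B}).
\]
Hence $\mathcal{X}$ is an inner inverse of $\mathcal{P}$ exactly when $\text{rsh}(\mathcal{X})$ is a $\{1\}$-inverse of $P$. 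So the classes $\mathcal{P}\{1\}$, $\mathcal{Q}\{1\}$ and $(\mathcal{P}*_t\mathcal{Q})\{1\}$ correspond bijectively to $P\{1\}$, $Q\{1\}$ and $(PQ)\{1\}$. Condition (\ref{0.1}) also transfers verbatim, with every tensor replaced by its reshaping.

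First I will fix the meaning of the statement, interpreting it as the inclusion
\[
\mathcal{Q}\{1\}*_t\mathcal{P}\{1\}\subseteq(\mathcal{P}*_t\mathcal{Q})\{1\}.
\]
In (\ref{0.1}), $\mathcal{P}^{(1)},\mathcal{Q}^{(1)}$ are then arbitrary inner inverses. The classical fact is that $P^-PQQ^-$ is idempotent either for all choices of inner inverses or for none, so the same holds for (\ref{0.1}).

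For the direct argument:

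\emph{Sufficiency.} Suppose $M=P^-PQQ^-$ is idempotent. Then
\[
PQ(Q^-P^-)PQ=P(P^-PQQ^-)(P^-PQQ^-)Q=P\,P^-PQQ^-\,Q=PQ,
\]
using $PP^-P=P$ and $QQ^-Q=Q$. So $Q^-P^-$ is an inner inverse of $PQ$.

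\emph{Necessity.} Assume $PQQ^-P^-PQ=PQ$ for the given inner inverses. Multiplying on the left by $P^-$ and on the right by $Q^-$ gives
\[
P^-PQQ^-P^-PQQ^-=P^-PQQ^-,
\]
which is exactly $M^2=M$. The same computation can be done directly with $*_s,*_t,*_u$, since only associativity of the Einstein product is used, so reshaping is needed only to justify the setting.

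The main obstacle is the quantifier issue. If the lemma is read as ``for all inner inverses,'' necessity with fixed $P^-,Q^-$ is immediate as above. The difficulty is showing that idempotency for one pair forces $Q^{(1)}*_t\mathcal{P}^{(1)}$ to be an inner inverse for every pair. Here I would use the general forms of inner inverses:
\[
P^{=}=P^-+(I-P^-P)Y+Z(I-PP^-),
\]
and similarly for $Q^{=}$. Substituting these, the extra terms vanish in $PQQ^{=}P^{=}PQ$ because $P(I-P^-P)=0$ and $(I-QQ^-)Q=0$. What remains are cross terms of the form
\[
P(I-P^-P)\cdots \quad\text{and}\quad \cdots(I-QQ^-)Q,
\]
which vanish as well. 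This reduces the general case to the fixed pair, so it suffices to verify the identity for one pair.
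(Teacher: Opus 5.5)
Your two displayed computations (sufficiency and necessity for a \emph{fixed} pair $\mathcal{P}^{(1)},\mathcal{Q}^{(1)}$) are correct. They already prove the lemma in the form the paper uses. The paper quotes the result without proof. In Theorems \ref{TheoremBInverse}--\ref{TheoremBCInverse} it imposes idempotency on the specific inner inverse $\mathcal{D}^{(1)}=\mathcal{A}^{(1)}*_s\bigrho$ and concludes only about that specific product.

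What fails is your quantifier paragraph. The ``classical fact'' that $P^-PQQ^-$ is idempotent either for all choices of inner inverses or for none is false. Your cancellation argument breaks because the annihilating identities are applied to factors that are not adjacent. Write $P^{=}=P^-+(I-P^-P)Y+Z(I-PP^-)$ and $Q^{=}=Q^-+(I-Q^-Q)U+V(I-QQ^-)$. Only the $Z$- and $U$-terms vanish in $PQQ^{=}P^{=}PQ$, via $(I-PP^-)P=0$ and $Q(I-Q^-Q)=0$. The following terms survive:
\[
PQQ^-(I-P^-P)Y\,PQ,\qquad PQ\,V(I-QQ^-)P^-PQ,\qquad PQ\,V(I-QQ^-)(I-P^-P)Y\,PQ .
\]
They survive because $(I-P^-P)$ is preceded by $PQQ^{=}$, not by $P$, and $(I-QQ^-)$ is followed by $P^{=}PQ$, not by $Q$.

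Here is a concrete counterexample with $s=t=u=1$, so the tensors are matrices. Take $P=\begin{bmatrix}1&1\end{bmatrix}$ and $Q=\begin{bmatrix}1&0\end{bmatrix}^T$, so $PQ=[1]$. For $P^-=\begin{bmatrix}1&0\end{bmatrix}^T$ and $Q^-=\begin{bmatrix}1&0\end{bmatrix}$, the product $P^-PQQ^-=\begin{bmatrix}1&0\\0&0\end{bmatrix}$ is idempotent and $Q^-P^-=[1]\in(PQ)\{1\}$. But $P^{=}=\begin{bmatrix}0&1\end{bmatrix}^T$ is also in $P\{1\}$, and $P^{=}PQQ^-=\begin{bmatrix}0&0\\1&0\end{bmatrix}$ is nilpotent, while $Q^-P^{=}=[0]\notin(PQ)\{1\}$. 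Here $P^{=}=P^-+(I-P^-P)Y$ with $Y=\begin{bmatrix}0&1\end{bmatrix}^T$, and the first surviving term above equals $-1$.

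So under the set-inclusion reading you chose, the step ``it suffices to verify the identity for one pair'' would establish a false statement. Idempotency for one pair does not give $\mathcal{Q}\{1\}*_t\mathcal{P}\{1\}\subseteq(\mathcal{P}*_t\mathcal{Q})\{1\}$. The all-pairs inclusion is a strictly stronger property: it holds exactly when $PQ=0$ or the null space of $P$ is contained in the range of $Q$, and this fails in the example above.

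The fix is to delete the invariance claim altogether. Either state and prove the lemma pairwise, which your computation does completely (directly with $*_s,*_t,*_u$, using only associativity, so the reshaping step is not needed). Or, if you want the inclusion version, require \eqref{0.1} for \emph{every} pair of inner inverses and apply the pairwise equivalence pair by pair.
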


Result given in Lemma \ref{Chai1Inverse2T} extends the perturbation of inner inverses from a Banach space, derived in \cite[Theorem 2.1]{LAA94}, to analogous result in the tensor environment.
More precisely, Lemma \ref{Chai1Inverse2T} investigates an aspect of the {\bf Problem 1.}
The results obtained in Lemma \ref{Chai1Inverse2T} show that a small perturbation $\mathcal{E}$ in $\mathcal{A}$, satisfying $\|\mathcal{A}^{(1)}\|\|\mathcal{E}\| < 1$, causes the perturbation in computing $\mathcal{A}^{(1)}$ defined by the multiplicative factors $\bigdelta =(\mathcal{I} +\mathcal{A}^{(1)}*_s \mathcal{E})^{-1}$ and $\bigrho =(\mathcal{I} + \mathcal{E}*_t \mathcal{A}^{(1)})^{-1}$ under the conditions (i) and (ii) in Lemma \ref{Chai1Inverse2T}.
In the essence, Lemma \ref{Chai1Inverse2T} investigates the relationship $ (\mathcal{A} + \mathcal{E})^{(1)}=\mathcal{A}^{(1)} *_s\bigrho =\bigdelta*_t\mathcal{A}^{(1)}$ under the specified conditions (i) and (ii).

The notations $\bigdelta =(\mathcal{I} +\mathcal{A}^{(1)}*_s \mathcal{E})^{-1}$ and $\bigrho =(\mathcal{I} + \mathcal{E}*_t \mathcal{A}^{(1)})^{-1}$ will be used in further text.

\begin{lemma}\label{Chai1Inverse2T}
 Let \( \mathcal{A} \in \mathbb{C}^{S(s) \times T(t)} \) with an inner inverse \( \mathcal{A}^{(1)} \in \mathbb{C}^{T(t)\times S(s) } \) and \( \mathcal{E} \in \mathbb{C}^{S(s) \times T(t)} \)
be a sufficiently small perturbation such that \( \|\mathcal{A}^{(1)}\|\|\mathcal{E}\| < 1 \).
 Then
\begin{equation}\label{EquHlr}
\mathcal {H} = \mathcal{A}^{(1)}*_s \bigrho = \bigdelta *_t \mathcal{A}^{(1)}
\end{equation}
represents an inner inverse of \( \mathcal{D} = \mathcal{A} + \mathcal{E} \) if and only if the subsequent statements are valid:
\begin{enumerate}
    \item[$(i)$] \( \aunclfamily{\bf R}(\mathcal{D}) \cap \aunclfamily{\bf N}(\mathcal{A}^{(1)}) = \{0\}; \)
    \item[$(ii)$] \( \bigrho *_s \mathcal{D}*_t \aunclfamily{\bf N}(\mathcal{A}) \subseteq \aunclfamily{\bf N}(\mathcal{A}^{(1)}*_s \mathcal{A}*_t \mathcal{A}^{(1)} - \mathcal{A}^{(1)}). \)
\end{enumerate}
\end{lemma}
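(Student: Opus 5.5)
The plan is to reduce the inner-inverse equation $\mathcal{D}*_t\mathcal{H}*_s\mathcal{D}=\mathcal{D}$ to a single explicit identity, using only the algebra of the factors $\bigrho$ and $\bigdelta$. Write $\mathcal{G}=\mathcal{A}^{(1)}$. The hypothesis $\|\mathcal{G}\|\|\mathcal{E}\|<1$ makes $\mathcal{I}+\mathcal{E}*_t\mathcal{G}$ and $\mathcal{I}+\mathcal{G}*_s\mathcal{E}$ invertible via Neumann series. The push-through identity $\mathcal{G}*_s(\mathcal{I}+\mathcal{E}*_t\mathcal{G})^{-1}=(\mathcal{I}+\mathcal{G}*_s\mathcal{E})^{-1}*_t\mathcal{G}$ then shows that $\mathcal{H}$ in (\ref{EquHlr}) is well defined.

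The key observation is that $\mathcal{E}=\mathcal{D}-\mathcal{A}$, so
\[
\mathcal{D}*_t\mathcal{G}=(\mathcal{I}+\mathcal{E}*_t\mathcal{G})-(\mathcal{I}-\mathcal{A}*_t\mathcal{G}).
\]
Multiplying on the right by $\bigrho$ gives
\begin{equation*}
\mathcal{D}*_t\mathcal{H}=\mathcal{I}-(\mathcal{I}-\mathcal{A}*_t\mathcal{G})*_s\bigrho ,
\end{equation*}
and therefore
\[
\mathcal{D}-\mathcal{D}*_t\mathcal{H}*_s\mathcal{D}=(\mathcal{I}-\mathcal{A}*_t\mathcal{G})*_s\bigrho*_s\mathcal{D}.
\]
Next I split $\mathcal{D}=\mathcal{D}*_t\mathcal{G}*_s\mathcal{A}+\mathcal{D}*_t(\mathcal{I}-\mathcal{G}*_s\mathcal{A})$. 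For the first piece, the displayed formula gives $\bigrho*_s\mathcal{D}*_t\mathcal{G}=\mathcal{I}-\bigrho*_s(\mathcal{I}-\mathcal{A}*_t\mathcal{G})$. Since $(\mathcal{I}-\mathcal{A}*_t\mathcal{G})*_s\mathcal{A}=\mathcal{O}$, the first piece contributes $(\mathcal{I}-\mathcal{A}*_t\mathcal{G})*_s\mathcal{A}=\mathcal{O}$. Because $\aunclfamily{\bf R}(\mathcal{I}-\mathcal{G}*_s\mathcal{A})=\aunclfamily{\bf N}(\mathcal{A})$, I obtain the reduction
\[
\mathcal{H}\ \text{is an inner inverse of}\ \mathcal{D}\iff (\mathcal{I}-\mathcal{A}*_t\mathcal{G})*_s\bigrho*_s\mathcal{D}*_t\aunclfamily{\bf N}(\mathcal{A})=\{0\}.\qquad(\star)
\]

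\emph{Sufficiency.} Take $z\in\aunclfamily{\bf N}(\mathcal{A})$ and put $w=\bigrho*_s\mathcal{D}*_t z$ and $y=(\mathcal{I}-\mathcal{A}*_t\mathcal{G})*_s w$.
\begin{itemize}
\item Condition (ii) says $(\mathcal{G}*_s\mathcal{A}*_t\mathcal{G}-\mathcal{G})*_s w=0$, that is $\mathcal{G}*_s y=0$, so $y\in\aunclfamily{\bf N}(\mathcal{A}^{(1)})$.
\item From the formula for $\mathcal{D}*_t\mathcal{H}$ we have $y=\mathcal{D}*_t z-\mathcal{D}*_t\mathcal{H}*_s\mathcal{D}*_t z$, so $y\in\aunclfamily{\bf R}(\mathcal{D})$.
\end{itemize}
Condition (i) then forces $y=0$, and $(\star)$ gives the conclusion.

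\emph{Necessity.} If $\mathcal{H}$ is an inner inverse, then $(\star)$ holds. For each $w$ as above this gives $(\mathcal{I}-\mathcal{A}*_t\mathcal{G})*_s w=0$, hence $\mathcal{G}*_s(\mathcal{I}-\mathcal{A}*_t\mathcal{G})*_s w=0$, which is exactly (ii). For (i), let $y=\mathcal{D}*_t x$ with $\mathcal{G}*_s y=0$. Then $\mathcal{H}*_s y=\bigdelta*_t\mathcal{G}*_s y=0$, using the second expression for $\mathcal{H}$. Hence
\[
y=\mathcal{D}*_t x=\mathcal{D}*_t\mathcal{H}*_s\mathcal{D}*_t x=\mathcal{D}*_t\mathcal{H}*_s y=0 .
\]

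The step I expect to be the main obstacle is the reduction $(\star)$. In particular, one must verify carefully that the range component $\mathcal{D}*_t\mathcal{G}*_s\mathcal{A}$ drops out, and recognize that $y$ lies in $\aunclfamily{\bf R}(\mathcal{D})$ through the identity for $\mathcal{D}*_t\mathcal{H}$. The rest is bookkeeping with Einstein products, which behave associatively exactly like matrix products via $\mathrm{rsh}$.
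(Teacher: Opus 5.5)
Your proof is correct. Its outer skeleton matches the paper's. Necessity of (i) comes from $\mathcal{H}*_s y=\bigdelta*_t\mathcal{G}*_s y$. Sufficiency comes from showing that the residual lies both in the range of $\mathcal{D}$ and in the null space of $\mathcal{A}^{(1)}$, and then invoking (i).

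The engine differs, though, because the paper never isolates the residual. For sufficiency, it pushes $\mathcal{H}*_s(\mathcal{D}*_t\mathcal{H}*_s\mathcal{D}-\mathcal{D})*_t\mathcal{X}$ through a long chain built on $\mathcal{H}*_s\mathcal{D}-\mathcal{I}=\bigdelta*_t(\mathcal{G}*_s\mathcal{A}-\mathcal{I})$ and $\bigdelta-\mathcal{I}=-\bigdelta*_t\mathcal{G}*_s\mathcal{E}$. The chain ends at $\bigdelta*_t(\mathcal{G}*_s\mathcal{A}*_t\mathcal{G}-\mathcal{G})*_s\bigrho*_s\mathcal{D}*_t(\mathcal{I}-\mathcal{G}*_s\mathcal{A})*_t\mathcal{X}$. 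For necessity of (ii), the paper uses idempotency of $\mathcal{H}*_s\mathcal{D}$ together with the same formula for $\mathcal{H}*_s\mathcal{D}-\mathcal{I}$.

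Your exact factorization
\[
\mathcal{D}-\mathcal{D}*_t\mathcal{H}*_s\mathcal{D}=(\mathcal{I}-\mathcal{A}*_t\mathcal{G})*_s\bigrho*_s\mathcal{D}*_t(\mathcal{I}-\mathcal{G}*_s\mathcal{A})
\]
subsumes that computation: left-multiplying it by $-\mathcal{H}$ gives precisely the paper's end point. It also yields more:
\begin{itemize}
\item The single condition $(\star)$ is already equivalent to $\mathcal{H}$ being an inner inverse of $\mathcal{D}$. So (i) and (ii) appear as a transparent splitting of $(\star)$ into a range part and an annihilated-by-$\mathcal{A}^{(1)}$ part.
\item Both directions become two-line arguments.
\item It explains why (ii) need only be imposed on the null space of $\mathcal{A}$: the component $\mathcal{D}*_t\mathcal{G}*_s\mathcal{A}$ is killed automatically.
\end{itemize}
The paper's chain reaches the same conclusion without splitting $\mathcal{D}$. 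The cost is length, and it is harder to see where each hypothesis is used.
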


\begin{proof}
The tensor $\mathcal {H}$ defined by \eqref{EquHlr}
%From  \( \mathcal {H} = \mathcal{A}^{(1)}*_s (\mathcal{I} + \mathcal{E}*_t \mathcal{A}^{(1)})^{-1} = (\mathcal{I} + \mathcal{A}^{(1)}*_s \mathcal{E})^{-1}*_t \mathcal{A}^{(1)} \)  is easy to see that
satisfies \( \aunclfamily{\bf R}(\mathcal {H}) = \aunclfamily{\bf R}(\mathcal{A}^{(1)}) \) and \( \aunclfamily{\bf N}(\mathcal {H}) = \aunclfamily{\bf N}(\mathcal{A}^{(1)}) \).
The notations $\bigAlpha = \mathcal{I} - \mathcal{A}^{(1)}*_s \mathcal{A}$ and $\bigbBeta=\mathcal{A}^{(1)}*_s \mathcal{A} - \mathcal{I}$  will be used in further proof.

\smallskip
\textbf{Necessity.} Let $\mathcal X \in \mathbb{C}^{T(t)}$ and $\mathcal{Y} \in \mathbb{C}^{N(m)}$.
If \( \mathcal {H} \) is an inner inverse of \( \mathcal{D} \), then for any \(  \mathcal{Y} \in \aunclfamily{\bf R}(\mathcal{D}) \cap \aunclfamily{\bf N}(\mathcal{A}^{(1)}) \)
it follows that \(  \mathcal{Y} \in \aunclfamily{\bf R}(\mathcal{D}) \cap \aunclfamily{\bf N}(\mathcal {H})\).
Consequently, there exists an  $\mathcal X  \in \mathbb{C}^{T(t)}$ satisfying \(  \mathcal{Y} = \mathcal{D}*_t \mathcal X \) and \( \mathcal {H}*_s \mathcal{D}*_t \mathcal X = 0 \).
Thus \(  \mathcal{Y} = \mathcal{D}*_t \mathcal X = \mathcal{D}*_t \mathcal {H}*_s \mathcal{D}*_t \mathcal X = 0 \).
This implies \( \aunclfamily{\bf R}(\mathcal{D}) \cap \aunclfamily{\bf N}(\mathcal{A}^{(1)}) = \{0\} \).
Next, for all \(  \mathcal X \in \aunclfamily{\bf N}(\mathcal{A}) \), the identity \( \mathcal {H}*_s \mathcal{D}*_t \mathcal {H}*_s \mathcal{D}*_t \mathcal X = \mathcal {H}*_s \mathcal{D}*_t \mathcal X \)
implies \( (\mathcal {H}*_s \mathcal{D} - \mathcal{I})*_t \mathcal {H}*_s \mathcal{D}*_t \mathcal X = 0 \).
Hence, we conclude
\[
\left(\bigdelta *_t\mathcal{A}^{(1)} *_s \mathcal{D} - \mathcal{I}\right)*_t \mathcal {H}*_s \mathcal{D}*_t \mathcal X = 0
\]
and
\[
\bigdelta*_t \left[\mathcal{A}^{(1)}*_s \mathcal{D} - (\mathcal{I} + \mathcal{A}^{(1)}*_s \mathcal{E})\right]*_t \mathcal {H}*_s \mathcal{D}*_t \mathcal X = 0,
\]
i.e., \( \bigbBeta*_t \mathcal {H}*_s \mathcal{D}*_t \mathcal X = 0 \).
Applying definition of $\mathcal{H}$ in \eqref{EquHlr} it can be concluded
\[
(\mathcal{A}^{(1)}*_s \mathcal{A}*_t \mathcal{A}^{(1)} - \mathcal{A}^{(1)})*_s \bigrho *_s \mathcal{D}*_t \mathcal X = \bigbBeta*_t \mathcal{A}^{(1)}*_s \bigrho*_s \mathcal{D}*_t \mathcal X
\]
\[
= \bigbBeta*_t \mathcal {H}*_s \mathcal{D}*_t \mathcal X = 0.
\]
Therefore, $(ii)$ is verified.
%\( (\mathcal{I} + \mathcal{E}*_t \mathcal{A}^{(1)})^{-1}*_s \mathcal{D}*_t \aunclfamily{\bf N}(\mathcal{A}) \subseteq \aunclfamily{\bf N}(\mathcal{A}^{(1)}*_s \mathcal{A}*_t \mathcal{A}^{(1)} - \mathcal{A}^{(1)}) \).

\smallskip
\textbf{Sufficiency.} For all $\mathcal X \in \mathbb{C}^{T(t)}$,  it follows \(\bigAlpha *_t \mathcal X \in \aunclfamily{\bf N}(\mathcal{A}) \).
%Next, let us write $\mathcal{D}^{(1)'}=\mathcal{A}^{(1)}*_s (\mathcal{I} + \mathcal{E}*_t \mathcal{A}^{(1)})^{-1} = (\mathcal{I} + \mathcal{A}^{(1)}*_s \mathcal{E})^{-1}*_t \mathcal{A}^{(1)}$.
Then by $(ii)$, we can obtain
\begin{align*}
\mathcal {H}*_s &(\mathcal{D}*_t\mathcal {H}*_s \mathcal{D}*_t \mathcal X - \mathcal{D}*_t \mathcal X) =\mathcal {H}*_s \mathcal{D}*_t (\mathcal {H}*_s \mathcal{D} - \mathcal{I})*_t \mathcal X \\
&= \bigdelta *_t \mathcal{A}^{(1)}*_s \mathcal{D}*_t (\mathcal {H}*_s \mathcal{D} - \mathcal{I})*_t \mathcal X \\
&= \bigdelta *_t \mathcal{A}^{(1)}*_s \mathcal{D}*_t \bigdelta *_t [\mathcal{A}^{(1)}*_s \mathcal{D} - (\mathcal{I} + \mathcal{A}^{(1)}*_s \mathcal{E})] *_t \mathcal X \\
&= \bigdelta *_t \mathcal{A}^{(1)}*_s \mathcal{D}*_t \bigdelta *_t \bigbBeta*_t \mathcal X \\
&= \bigdelta *_t \mathcal{A}^{(1)}*_s (\mathcal{A}+\mathcal{E})*_t \bigdelta *_t \bigbBeta*_t \mathcal X \\
&= \bigdelta *_t (\bigbBeta + \mathcal{I} + \mathcal{A}^{(1)}*_s \mathcal{E})*_t \bigdelta *_t \bigbBeta*_t \mathcal X \\
&= \bigdelta *_t \left[\bigbBeta*_t \bigdelta *_t \bigbBeta*_t \mathcal X + \bigbBeta*_t \mathcal X\right] \\
&= \bigdelta *_t \bigbBeta*_t \left(\bigdelta - \mathcal{I}\right)*_t \bigbBeta*_t \mathcal X \\
&= \bigdelta *_t \bigbBeta*_t \bigdelta *_t \left[\mathcal{I} - (\mathcal{I} + \mathcal{A}^{(1)}*_s \mathcal{E})\right]*_t \bigbBeta*_t \mathcal X \\
&= \bigdelta *_t \bigbBeta*_t \bigdelta *_t \mathcal{A}^{(1)}*_s \mathcal{E}*_t \bigAlpha *_t \mathcal X \\
&= \bigdelta *_t \bigbBeta*_t \mathcal{A}^{(1)}*_s \bigrho *_s \mathcal{E}*_t \bigAlpha *_t \mathcal X \\
&= \bigdelta *_t \left(\mathcal{A}^{(1)}*_s \mathcal{A}*_t \mathcal{A}^{(1)} - \mathcal{A}^{(1)}\right)*_s \bigrho *_s \mathcal{D}*_t\bigAlpha *_t \mathcal X \\
&= \bigdelta *_t 0 = 0.
\end{align*}

Thus, \( \mathcal{D}*_t\mathcal {H}*_s \mathcal{D}*_t \mathcal X - \mathcal{D}*_t \mathcal X \in \aunclfamily{\bf N}(\mathcal {H}) \cap \aunclfamily{\bf R}(\mathcal{D}) \).
Therefore by $(i)$ and \( \aunclfamily{\bf N}(\mathcal {H}) = \aunclfamily{\bf N}(\mathcal{A}^{(1)}) \), it can be concluded
\( \mathcal{D}*_t\mathcal {H}*_s \mathcal{D}*_t \mathcal X = \mathcal{D}*_t \mathcal X \), for arbitrary $\mathcal X \in \mathbb{C}^{T(t)}$.
Therefore, $\mathcal {H}$ defined by \eqref{EquHlr}
%\(\mathcal {H}=\mathcal{A}^{(1)}*_s (\mathcal{I} + \mathcal{E}*_t \mathcal{A}^{(1)})^{-1} = (\mathcal{I} + \mathcal{A}^{(1)}*_s \mathcal{E})^{-1}*_t \mathcal{A}^{(1)} \)
is an inner inverse of \( \mathcal{D} \).
\end{proof}

\begin{lemma}\label{BInverseEquation}{\rm \cite{LMA685}}
If $\mathcal{A}\in\mathbb{C}^{S(s)\times T(t)}$and $\mathcal{B}\in\mathbb{C}^{T(t)\times K(k)}$
satisfy $\rra{\mathcal{A}*_t\mathcal{B}}=\rra{\mathcal{B}}$, then
\begin{equation}\label{0.31}
\mathcal{A}^{(2)}_{\aunclfamily{\bf R}(\mathcal{B}),*}=\mathcal{B}*_k(\mathcal{A}*_t\mathcal{B})^{(1)}.
\end{equation}
\end{lemma}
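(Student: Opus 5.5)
Set $\mathcal{X}=\mathcal{B}*_k(\mathcal{A}*_t\mathcal{B})^{(1)}$ and check directly that it is a $(\mathcal{B})$-inverse of $\mathcal{A}$. Lemma \ref{RBinverse} then identifies it with $\mathcal{A}^{(2)}_{\aunclfamily{\bf R}(\mathcal{B}),*}$. So the work reduces to verifying (1TB) $\mathcal{X}*_s\mathcal{A}*_t\mathcal{B}=\mathcal{B}$ and (2TB) $\mathcal{X}\in\mathcal{B}*_k\mathbb{C}^{K(k)\times T(t)}*_t\mathcal{X}$.

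Condition (2TB) is the easy half. I will use the factorization $\mathcal{X}=\mathcal{B}*_k\bigl((\mathcal{A}*_t\mathcal{B})^{(1)}*_s\mathcal{A}\bigr)*_t\mathcal{X}$. Expanding the right-hand side gives $\mathcal{B}*_k(\mathcal{A}*_t\mathcal{B})^{(1)}*_s(\mathcal{A}*_t\mathcal{B})*_k(\mathcal{A}*_t\mathcal{B})^{(1)}$. This equals $\mathcal{X}$ once we know $\mathcal{X}*_s\mathcal{A}*_t\mathcal{B}=\mathcal{B}$, since then $\mathcal{X}*_s\mathcal{A}*_t\mathcal{X}=\mathcal{B}*_k(\mathcal{A}*_t\mathcal{B})^{(1)}=\mathcal{X}$. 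So (2TB), and indeed the outer-inverse property, follows from (1TB), and (1TB) is the real content.

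For (1TB) I will pass to matrices via $\mathrm{rsh}$. Einstein products correspond to ordinary matrix products of the reshaped matrices, and $\rra{\cdot}$ is the ordinary rank of the reshaped matrix. Writing $A,B$ for the reshapes, the hypothesis becomes $\operatorname{rank}(AB)=\operatorname{rank}(B)$. Since $\mathbf N(B)\subseteq\mathbf N(AB)$, equal dimensions force $\mathbf N(AB)=\mathbf N(B)$. Now let $G$ be any inner inverse of $AB$. From $AB\,G\,AB=AB$ we get $AB(G\,AB-I)=0$, so every column of $G\,AB-I$ lies in $\mathbf N(AB)=\mathbf N(B)$. Hence $B(G\,AB-I)=0$, i.e. $BGAB=B$, which is (1TB) after reshaping back.

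The main obstacle is the transfer between the tensor and matrix settings. One must confirm that $\mathrm{rsh}$ is an algebra isomorphism for the Einstein product, that inner inverses correspond, and that $\rra{\cdot}$ equals the matrix rank. All of this follows from the definitions in \cite{LMA685}. If one prefers to stay in the tensor world, the same argument works with $\aunclfamily{\bf N}(\mathcal{A}*_t\mathcal{B})=\aunclfamily{\bf N}(\mathcal{B})$, which can be deduced from the rank equality through the rank–nullity theorem applied to the reshaped matrices.
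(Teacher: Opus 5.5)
Your proof is correct. The paper does not prove this lemma; it imports it from \cite{LMA685}, so there is no in-paper argument to compare yours with. What you give is the standard Urquhart-type verification, and each step holds:
\begin{itemize}
\item Because $B$ and $AB$ have the same number of columns, the rank equality gives $\mathbf N(AB)=\mathbf N(B)$.
\item This yields $BGAB=B$ for every inner inverse $G$ of $AB$, which is (1TB).
\item (2TB) and $\mathcal{X}*_s\mathcal{A}*_t\mathcal{X}=\mathcal{X}$ then follow by pure algebra, so Lemma~\ref{RBinverse} applies.
\end{itemize}

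Two remarks.

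First, state explicitly that your argument works for an \emph{arbitrary} inner inverse $(\mathcal{A}*_t\mathcal{B})^{(1)}$, and that the rank hypothesis is used only for (1TB). This is exactly how the lemma is invoked in the proof of Theorem~\ref{TheoremBInverse}, where the particular inner inverse $\mathcal{B}^{(1)}*_t\mathcal{D}^{(1)}$ of $\mathcal{D}*_t\mathcal{B}$ is substituted.

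Second, if the equality in the statement is read as an equality of sets, the reverse inclusion takes one line. Let $\mathcal{X}=\mathcal{B}*_k\mathcal{Z}$ be an outer inverse of $\mathcal{A}$ whose range is that of $\mathcal{B}$. Then $\mathcal{X}*_s\mathcal{A}$ is idempotent with that same range, so $\mathcal{X}*_s\mathcal{A}*_t\mathcal{B}=\mathcal{B}$. Hence $(\mathcal{A}*_t\mathcal{B})*_k\mathcal{Z}*_s(\mathcal{A}*_t\mathcal{B})=\mathcal{A}*_t\mathcal{X}*_s\mathcal{A}*_t\mathcal{B}=\mathcal{A}*_t\mathcal{B}$, so $\mathcal{Z}$ is an inner inverse of $\mathcal{A}*_t\mathcal{B}$.
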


\begin{lemma}\label{CInverseEquation}{\rm \cite{LMA685}}
Let $\mathcal{A}\in\mathbb{C}^{S(s)\times T(t)}$ and $\mathcal{C}\in\mathbb{C}^{L(l)\times S(s)}$.
Under the constraint $\rra{\mathcal{C}*_s\mathcal{A}}=\rra{\mathcal{C}}$, it can be obtained
\begin{equation}\label{0.32}
\mathcal{A}^{(2)}_{*,\aunclfamily{\bf N}(\mathcal{C})}=(\mathcal{C}*_s\mathcal{A})^{(1)}*_l\mathcal{C}.
\end{equation}
\end{lemma}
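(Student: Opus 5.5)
We prove the formula for $\mathcal{A}^{(2)}_{*,\aunclfamily{\bf N}(\mathcal{C})}$ by mirroring Lemma \ref{BInverseEquation}: we combine the characterization of outer inverses with prescribed null space from Lemma \ref{RBinverse} with a rank argument on the reshaped matrices. We set $\mathcal{X}=(\mathcal{C}*_s\mathcal{A})^{(1)}*_l\mathcal{C}$, where $(\mathcal{C}*_s\mathcal{A})^{(1)}\in\mathbb{C}^{T(t)\times L(l)}$ is any inner inverse. By Lemma \ref{RBinverse}, it suffices to show that $\mathcal{X}$ is a $(\mathcal{C})$-inverse of $\mathcal{A}$, i.e. that it satisfies (1TC) and (2TC).

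First, (2TC) is immediate. Taking $\mathcal{Y}=\mathcal{A}*_t(\mathcal{C}*_s\mathcal{A})^{(1)}\in\mathbb{C}^{S(s)\times L(l)}$, we get
\[
\mathcal{X}*_s\mathcal{Y}*_l\mathcal{C}=(\mathcal{C}*_s\mathcal{A})^{(1)}*_l\mathcal{C}*_s\mathcal{A}*_t(\mathcal{C}*_s\mathcal{A})^{(1)}*_l\mathcal{C},
\]
which equals $\mathcal{X}$ once we know $\mathcal{C}*_s\mathcal{A}*_t(\mathcal{C}*_s\mathcal{A})^{(1)}*_l\mathcal{C}=\mathcal{C}$. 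That identity is exactly (1TC). So everything reduces to proving
\[
\mathcal{C}*_s\mathcal{A}*_t(\mathcal{C}*_s\mathcal{A})^{(1)}*_l\mathcal{C}=\mathcal{C}.
\]

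Proving (1TC) is the main step. Write $\mathcal{G}=\mathcal{C}*_s\mathcal{A}$ and $\mathcal{P}=\mathcal{G}*_t\mathcal{G}^{(1)}$. Then $\mathcal{P}$ is idempotent and acts as the identity on $\aunclfamily{\bf R}(\mathcal{G})$, since $\mathcal{P}*_l\mathcal{G}=\mathcal{G}$. We also have $\aunclfamily{\bf R}(\mathcal{G})\subseteq\aunclfamily{\bf R}(\mathcal{C})$. The hypothesis $\rra{\mathcal{G}}=\rra{\mathcal{C}}$ should upgrade this inclusion to equality. To justify this we pass through $\text{rsh}$, using that the Einstein product corresponds to the matrix product of reshaped tensors, $\text{rsh}(\mathcal{C}*_s\mathcal{A})=\text{rsh}(\mathcal{C})\,\text{rsh}(\mathcal{A})$, and that rshrank is the matrix rank. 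Equal ranks of the column spaces, with one space contained in the other, force $\aunclfamily{\bf R}(\mathcal{G})=\aunclfamily{\bf R}(\mathcal{C})$. Hence $\mathcal{C}=\mathcal{G}*_t\mathcal{W}$ for some $\mathcal{W}\in\mathbb{C}^{T(t)\times S(s)}$, obtained by applying the range equality to each column slice of $\mathcal{C}$. Then
\[
\mathcal{P}*_l\mathcal{C}=\mathcal{G}*_t\mathcal{G}^{(1)}*_l\mathcal{G}*_t\mathcal{W}=\mathcal{G}*_t\mathcal{W}=\mathcal{C},
\]
which is (1TC).

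The expected obstacle is making the range-equality step rigorous in tensor language. We need the facts that the rsh map is an isomorphism compatible with $*_s$, and that the rshrank definition via $EAP$ coincides with the ordinary matrix rank. We also need care in turning column-space equality into the factorization $\mathcal{C}=\mathcal{G}*_t\mathcal{W}$. Once (1TC) holds, (2TC) follows as shown above, and Lemma \ref{RBinverse} then gives $\mathcal{X}=\mathcal{A}^{(2)}_{*,\aunclfamily{\bf N}(\mathcal{C})}$. As a sanity check, $\aunclfamily{\bf N}(\mathcal{X})=\aunclfamily{\bf N}(\mathcal{C})$ can also be seen directly: $\mathcal{C}*_s\mathcal{Z}=\mathcal{O}$ gives $\mathcal{X}*_s\mathcal{Z}=\mathcal{O}$, and conversely $\mathcal{X}*_s\mathcal{Z}=\mathcal{O}$ gives $\mathcal{C}*_s\mathcal{Z}=\mathcal{G}*_t\mathcal{X}*_s\mathcal{Z}=\mathcal{O}$ by (1TC).
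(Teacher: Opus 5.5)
The paper gives no proof of this lemma; it is quoted from \cite{LMA685}, so there is no internal argument to compare yours with. Your proof is correct and self-contained, provided the standard facts you flag hold:
$\text{rsh}$ is a linear isomorphism with $\text{rsh}(\mathcal{C}*_s\mathcal{A})=\text{rsh}(\mathcal{C})\,\text{rsh}(\mathcal{A})$, and the matrices $E,P$ in the rshrank definition are nonsingular. Under these, $\rra{\cdot}$ is the ordinary rank and equals $\dim\aunclfamily{\bf R}(\cdot)$. The rank hypothesis enters exactly once, to upgrade $\aunclfamily{\bf R}(\mathcal{C}*_s\mathcal{A})\subseteq\aunclfamily{\bf R}(\mathcal{C})$ to equality, i.e.\ $\mathcal{C}=\mathcal{C}*_s\mathcal{A}*_t\mathcal{W}$. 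Everything after that is algebra.

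\textbf{Your ``sanity check'' is already a complete ending.} It bypasses (2TC) and Lemma \ref{RBinverse}. Once (1TC) holds,
$\mathcal{X}*_s\mathcal{A}*_t\mathcal{X}=(\mathcal{C}*_s\mathcal{A})^{(1)}*_l(\mathcal{C}*_s\mathcal{A}*_t\mathcal{X})=\mathcal{X}$.
Together with your computation $\aunclfamily{\bf N}(\mathcal{X})=\aunclfamily{\bf N}(\mathcal{C})$, this is exactly the definition of $\mathcal{A}^{(2)}_{*,\aunclfamily{\bf N}(\mathcal{C})}$.

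\textbf{Which direction you prove.} You show that every $(\mathcal{C}*_s\mathcal{A})^{(1)}*_l\mathcal{C}$ is an outer inverse with null space $\aunclfamily{\bf N}(\mathcal{C})$. That is the only direction the paper uses, e.g.\ in Theorem \ref{TheoremCInverse}.

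If the formula is read, as in the matrix literature, as describing all such outer inverses, the converse takes two lines:
\begin{enumerate}
\item For a $(\mathcal{C})$-inverse $\mathcal{X}$, (2TC) gives $\mathcal{X}=\mathcal{Y}*_l\mathcal{C}$ with $\mathcal{Y}=\mathcal{X}*_s\mathcal{Z}$ for some $\mathcal{Z}$.
\item Right-multiplying (1TC) by $\mathcal{A}$ gives $(\mathcal{C}*_s\mathcal{A})*_t\mathcal{Y}*_l(\mathcal{C}*_s\mathcal{A})=\mathcal{C}*_s\mathcal{A}$, so $\mathcal{Y}$ is an inner inverse of $\mathcal{C}*_s\mathcal{A}$.
\end{enumerate}
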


\begin{lemma}\label{BCInverseEquation}{\rm \cite{LMA685}}
Let $\mathcal{A}\in\mathbb{C}^{S(s)\times T(t)}$, $\mathcal{B}\in\mathbb{C}^{T(t)\times K(k)}$ and $\mathcal{C}\in\mathbb{C}^{L(l)\times S(s)}$. If $\rra{\mathcal{C}*_s\mathcal{A}*_t\mathcal{B}}=\rra{\mathcal{C}}=\rra{\mathcal{B}}$, it follows
\begin{equation}\label{0.33}
\mathcal{A}^{(2)}_{\aunclfamily{\bf R}(\mathcal{B}),\aunclfamily{\bf N}(\mathcal{C})}=\mathcal{B}*_k(\mathcal{C}*_s\mathcal{A}*_t\mathcal{B})^{(1)}*_l\mathcal{C}.
\end{equation}
\end{lemma}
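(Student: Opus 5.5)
We prove Lemma \ref{BCInverseEquation} by setting $\mathcal{X}=\mathcal{B}*_k(\mathcal{C}*_s\mathcal{A}*_t\mathcal{B})^{(1)}*_l\mathcal{C}$ and checking three things: $\mathcal{X}$ is an outer inverse of $\mathcal{A}$, $\aunclfamily{\bf R}(\mathcal{X})=\aunclfamily{\bf R}(\mathcal{B})$, and $\aunclfamily{\bf N}(\mathcal{X})=\aunclfamily{\bf N}(\mathcal{C})$. Throughout we pass to the reshaped matrices $B=\text{rsh}(\mathcal{B})$, $C=\text{rsh}(\mathcal{C})$, $A=\text{rsh}(\mathcal{A})$. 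Reshaping turns the Einstein product into the ordinary matrix product, and $\mathrm{rshrank}$ is the matrix rank. So every rank and range statement can be read off from the corresponding matrix statement and transported back.

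\textbf{Step 1 (rank facts give cancellation).} Write $\mathcal{M}=\mathcal{C}*_s\mathcal{A}*_t\mathcal{B}$. We have the chain
\[
\rra{\mathcal{M}}\le\rra{\mathcal{A}*_t\mathcal{B}}\le\rra{\mathcal{B}},
\]
so the hypothesis $\rra{\mathcal{M}}=\rra{\mathcal{B}}$ forces equality throughout. Hence $\aunclfamily{\bf N}(\mathcal{M})=\aunclfamily{\bf N}(\mathcal{B})$, since one null space contains the other and their dimensions agree. Symmetrically, $\rra{\mathcal{M}}=\rra{\mathcal{C}}$ gives $\aunclfamily{\bf R}(\mathcal{M})=\aunclfamily{\bf R}(\mathcal{C})$.

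From these we derive the two cancellation identities
\[
\mathcal{B}*_k\mathcal{M}^{(1)}*_l\mathcal{M}=\mathcal{B},\qquad \mathcal{M}*_k\mathcal{M}^{(1)}*_l\mathcal{C}=\mathcal{C}.
\]
For the first: $\mathcal{I}-\mathcal{M}^{(1)}*_l\mathcal{M}$ maps into $\aunclfamily{\bf N}(\mathcal{M})=\aunclfamily{\bf N}(\mathcal{B})$, so multiplying it on the left by $\mathcal{B}$ gives zero. For the second: $\mathcal{C}=\mathcal{M}*_k\mathcal{W}$ for some $\mathcal{W}$, and $\mathcal{M}*_k\mathcal{M}^{(1)}*_l\mathcal{M}=\mathcal{M}$.

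\textbf{Step 2 (the three properties).} For the outer-inverse property, the first identity collapses $\mathcal{X}*_s\mathcal{A}*_t\mathcal{X}$:
\[
\mathcal{X}*_s\mathcal{A}*_t\mathcal{X}=\mathcal{B}*_k\mathcal{M}^{(1)}*_l\mathcal{M}*_k\mathcal{M}^{(1)}*_l\mathcal{C}=\mathcal{B}*_k\mathcal{M}^{(1)}*_l\mathcal{C}=\mathcal{X}.
\]
For the range, $\aunclfamily{\bf R}(\mathcal{X})\subseteq\aunclfamily{\bf R}(\mathcal{B})$ is immediate. Conversely,
\[
\mathcal{X}*_s\mathcal{A}*_t\mathcal{B}=\mathcal{B}*_k\mathcal{M}^{(1)}*_l\mathcal{M}=\mathcal{B},
\]
so $\aunclfamily{\bf R}(\mathcal{B})\subseteq\aunclfamily{\bf R}(\mathcal{X})$. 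For the null space, $\aunclfamily{\bf N}(\mathcal{C})\subseteq\aunclfamily{\bf N}(\mathcal{X})$ is immediate. Conversely,
\[
\mathcal{C}*_s\mathcal{A}*_t\mathcal{X}=\mathcal{M}*_k\mathcal{M}^{(1)}*_l\mathcal{C}=\mathcal{C},
\]
so $\mathcal{X}*_s\mathcal{Z}=\mathcal{O}$ implies $\mathcal{C}*_s\mathcal{Z}=\mathcal{O}$. These same computations verify (1T), and (2T) follows from them as well, which ties the result to the $(\mathcal{B},\mathcal{C})$-inverse of Definition 2 and is consistent with Lemma \ref{RBinverse}.

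\textbf{Step 3 (independence of the inner inverse).} Since the formula is stated with an arbitrary $\mathcal{M}^{(1)}$, we should also note that $\mathcal{X}$ does not depend on which inner inverse is chosen. An outer inverse with prescribed range and null space is unique. Indeed, if $\mathcal{X}_1,\mathcal{X}_2$ are two such inverses, then $\mathcal{X}_1*_s\mathcal{A}$ and $\mathcal{X}_2*_s\mathcal{A}$ act as the identity on the common range, and $\mathcal{X}_i=\mathcal{X}_i*_s\mathcal{A}*_t\mathcal{X}_j$ for all $i,j$; this gives $\mathcal{X}_1=\mathcal{X}_2$.

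The main obstacle is Step 1: justifying that equal rshrank together with a one-sided inclusion yields equality of the tensor null spaces and ranges. We handle this through $\text{rsh}$, where it is the standard matrix dimension argument, and then translate back using the fact that $\text{rsh}$ is a linear bijection compatible with the Einstein product.
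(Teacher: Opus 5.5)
The paper gives no proof of this lemma; it is quoted from \cite{LMA685}, so there is no in-paper argument to compare against. Your proof is correct and self-contained, and it is the standard Urquhart-type verification. The rank hypotheses become the two cancellation identities $\mathcal{B}*_k\mathcal{M}^{(1)}*_l\mathcal{M}=\mathcal{B}$ and $\mathcal{M}*_k\mathcal{M}^{(1)}*_l\mathcal{C}=\mathcal{C}$. From these, each of $\mathcal{X}*_s\mathcal{A}*_t\mathcal{X}=\mathcal{X}$, $\mathcal{X}*_s\mathcal{A}*_t\mathcal{B}=\mathcal{B}$ and $\mathcal{C}*_s\mathcal{A}*_t\mathcal{X}=\mathcal{C}$ follows in one line, and these also give (1T) and (2T) directly. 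Transporting ranks, ranges and null spaces through $\text{rsh}$ is legitimate, because $\text{rsh}$ is a linear bijection that turns Einstein products into matrix products, and $\mathrm{rshrank}$ is the rank of the reshaped matrix.

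In the paper, the symbol for an outer inverse with prescribed range and null space denotes any such outer inverse, so Steps 1--2 already prove the statement. Step 3 (uniqueness) is an extra, and it has one unjustified claim. The fact that $\mathcal{X}_i*_s\mathcal{A}$ acts as the identity on the common range only yields $\mathcal{X}_i*_s\mathcal{A}*_t\mathcal{X}_j=\mathcal{X}_j$. The other identity you assert, $\mathcal{X}_i=\mathcal{X}_i*_s\mathcal{A}*_t\mathcal{X}_j$, needs the common null space, which you never invoke.

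The missing argument is short. First, $\mathcal{A}*_t\mathcal{X}_j$ has the same null space as $\mathcal{X}_j$: if $\mathcal{A}*_t\mathcal{X}_j*_s\mathcal{Z}=\mathcal{O}$, then $\mathcal{X}_j*_s\mathcal{Z}=\mathcal{X}_j*_s\mathcal{A}*_t\mathcal{X}_j*_s\mathcal{Z}=\mathcal{O}$. That null space equals the null space of $\mathcal{X}_i$. Next, $\mathcal{I}-\mathcal{A}*_t\mathcal{X}_j$ maps into the null space of $\mathcal{A}*_t\mathcal{X}_j$, since $\mathcal{A}*_t\mathcal{X}_j$ is idempotent. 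Hence $\mathcal{X}_i*_s(\mathcal{I}-\mathcal{A}*_t\mathcal{X}_j)=\mathcal{O}$. Combining both identities gives $\mathcal{X}_1=\mathcal{X}_1*_s\mathcal{A}*_t\mathcal{X}_2=\mathcal{X}_2$.
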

\begin{lemma}{\rm \cite{FTPBF352}}\label{le-inv}
Let $\mathcal{F}\in\mathbb{C}^{S(s)\times S(s)}$ satisfy $\Vert \mathcal{F} \Vert <1$.
In this case, $\mathcal{I}+\mathcal{F}$ is invertible and
\begin{equation}\label{0.4}
\Vert (\mathcal{I}+\mathcal{F})^{-1}\Vert \leq \frac{1}{1-\Vert \mathcal{F}\Vert}.
 \end{equation}
\end{lemma}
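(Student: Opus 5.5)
The plan is to move the whole statement to matrices through the reshaping isomorphism $\text{rsh}$. Under this map the Einstein product becomes ordinary matrix multiplication, and the identity tensor $\mathcal{I}\in\mathbb{C}^{S(s)\times S(s)}$ becomes $I_{\mathfrak{S}}$. The norm on tensors is consistent (the Frobenius or spectral norm) and is transported to a consistent matrix norm by $\text{rsh}$. Consequently the claim reduces to the classical Neumann-series lemma for $F=\text{rsh}(\mathcal{F})\in\mathbb{C}^{\mathfrak{S}\times\mathfrak{S}}$ with $\|F\|<1$. The argument can equally be run directly at the tensor level, which is what I would write down.

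The first step is invertibility. Suppose $(\mathcal{I}+\mathcal{F})*_s\mathcal{X}=\mathcal{O}$ for some $\mathcal{X}\in\mathbb{C}^{S(s)}$ (or for a square tensor $\mathcal{X}$). Then $\mathcal{X}=-\mathcal{F}*_s\mathcal{X}$, so $\|\mathcal{X}\|\le\|\mathcal{F}\|\,\|\mathcal{X}\|$. Since $\|\mathcal{F}\|<1$, this forces $\mathcal{X}=\mathcal{O}$. Hence $\aunclfamily{\bf N}(\mathcal{I}+\mathcal{F})=\{0\}$, and after reshaping, $I+F$ is a square matrix with trivial kernel and is therefore invertible. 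Alternatively, set $\mathcal{S}_N=\sum_{k=0}^{N}(-\mathcal{F})^k$. Consistency gives $\|(-\mathcal{F})^k\|\le\|\mathcal{F}\|^k$, so the partial sums form a Cauchy sequence in the finite-dimensional space $\mathbb{C}^{S(s)\times S(s)}$ and converge to some $\mathcal{S}$. Telescoping gives $(\mathcal{I}+\mathcal{F})*_s\mathcal{S}_N=\mathcal{I}-(-\mathcal{F})^{N+1}\to\mathcal{I}$, and the same holds on the other side, so $\mathcal{S}=(\mathcal{I}+\mathcal{F})^{-1}$.

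The second step is the bound \eqref{0.4}. Write $\mathcal{G}=(\mathcal{I}+\mathcal{F})^{-1}$. From $\mathcal{G}+\mathcal{F}*_s\mathcal{G}=\mathcal{I}$ we get $\mathcal{G}=\mathcal{I}-\mathcal{F}*_s\mathcal{G}$, hence $\|\mathcal{G}\|\le\|\mathcal{I}\|+\|\mathcal{F}\|\,\|\mathcal{G}\|$. Rearranging gives $\|\mathcal{G}\|\le \|\mathcal{I}\|/(1-\|\mathcal{F}\|)$. Equivalently, from the Neumann series, $\|\mathcal{G}\|\le\sum_k\|\mathcal{F}\|^k=1/(1-\|\mathcal{F}\|)$, where the $k=0$ term is counted as $\|\mathcal{I}\|$.

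The only delicate point, and the step I expect to be the main obstacle, is the normalization $\|\mathcal{I}\|=1$. This holds for the spectral norm, the operator norm induced on $\text{rsh}$. It fails for the Frobenius norm, where $\|\mathcal{I}\|_F=\sqrt{\mathfrak{S}}$. I would therefore state explicitly that $\|\cdot\|$ is a consistent norm with $\|\mathcal{I}\|=1$, for instance the spectral norm or any norm induced by a vector norm on $\mathbb{C}^{S(s)}$. The Frobenius case is then handled by the cruder bound $\|\mathcal{I}\|/(1-\|\mathcal{F}\|)$, or by the standard dominance of the spectral norm by the Frobenius norm when the bound is applied later to products such as $\mathcal{A}^{(1)}*_s\bigrho$.
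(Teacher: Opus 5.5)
The paper gives no proof of this lemma; it is quoted from the cited T-product reference, so there is no in-paper argument to compare against. Your argument is the standard one: transfer via $\mathrm{rsh}$, then either a trivial-kernel argument or the Neumann series for invertibility, then $\mathcal{G}=\mathcal{I}-\mathcal{F}*_s\mathcal{G}$ for the bound. It is correct for every consistent norm with $\|\mathcal{I}\|=1$, in particular the spectral norm. The invertibility part needs no normalization at all. Your square-tensor variant of the kernel argument also correctly covers consistent norms that are not compatible with any vector norm on $\mathbb{C}^{S(s)}$.

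Your caveat is the substantive point, and it exposes a defect in the statement, not in your proof. The paper says its norm is a consistent norm that includes the Frobenius norm, and every example evaluates the bounds with $\|\cdot\|_F$. Yet for $\mathcal{F}=\mathcal{O}$ the claimed inequality reads $\sqrt{\mathfrak{S}}\le 1$, which is false whenever $\mathfrak{S}>1$. So the lemma has to be restricted as you propose.

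Your repair for the later results is also right. Using $\|\mathcal{X}*_s\mathcal{Y}\|_F\le\|\mathcal{X}\|_F\|\mathcal{Y}\|_2$ and $\|\cdot\|_2\le\|\cdot\|_F$, one gets for example
$\|\mathcal{A}^{(1)}*_s(\mathcal{I}+\mathcal{E}*_t\mathcal{A}^{(1)})^{-1}\|_F\le\|\mathcal{A}^{(1)}\|_F/(1-\|\mathcal{E}*_t\mathcal{A}^{(1)}\|_F)$.
Hence the Frobenius-norm bounds in Theorem~\ref{APlusB1} and in the outer-inverse theorems of Section 4 remain valid, even though Lemma~\ref{le-inv} itself fails for $\|\cdot\|_F$.

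One small slip: if the $k=0$ term of the Neumann series is counted as $\|\mathcal{I}\|$, the sum is $\|\mathcal{I}\|-1+\frac{1}{1-\|\mathcal{F}\|}$, not $\frac{1}{1-\|\mathcal{F}\|}$. The two agree only when $\|\mathcal{I}\|=1$. In the Frobenius case this expression is actually slightly sharper than your bound $\|\mathcal{I}\|/(1-\|\mathcal{F}\|)$.
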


\section{Perturbation of inner inverse}

In this section, we give the equivalent condition under which \( \mathcal{H} = \mathcal{A}^{(1)}*_s (\mathcal{I} + \mathcal{E}*_t \mathcal{A}^{(1)})^{-1} =\mathcal{A}^{(1)}*_s \bigrho\) serves as an inner inverse of
\( \mathcal{D} = \mathcal{A} + \mathcal{E} \).
Additionally, we provide the expression for the inner inverse of the tensor perturbation.
Perturbation of such type is examined in \cite[Theorem 2.2]{LAA94} for Banach spaces.

\begin{lemma}\label{Chai1Inverse5T}
Assume that \( \mathcal{A} \in \mathbb{C}^{S(s) \times T(t)} \) and \( \mathcal{D} \in \mathbb{C}^{S(s) \times T(t)} \) with an inner inverse
\( \mathcal {H} \in \mathbb{C}^{ T(t) \times S(s)}\in \mathcal{D}\{1\} \).
Let \(\mathcal{E} \in  \mathbb{C}^{S(s) \times T(t)}  \) satisfy \( \|\mathcal{A}^{(1)}\|\|\mathcal{E}\| < 1 \).
The next statements are equivalent:
\begin{enumerate}
    \item[$(i)$] \( \mathcal {H} = \mathcal{A}^{(1)}*_s \bigrho \) is an inner inverse of \( \mathcal{D} = \mathcal{A} + \mathcal{E} \);
    \item[$(ii)$] \( \aunclfamily{\bf N}(\mathcal{A}*_t \mathcal{A}^{(1)}) \cap  \aunclfamily{\bf R}(\mathcal{D})= \{0\} \);
    \item[$(iii)$] \( \bigdelta *_t \aunclfamily{\bf N}(\mathcal{A})  = \aunclfamily{\bf N}(\mathcal{D}) \).
\end{enumerate}
\end{lemma}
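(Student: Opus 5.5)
The plan is to reduce all three statements to one identity for $\mathcal{I}-\mathcal{H}*_s\mathcal{D}$, using the notation $\bigAlpha=\mathcal{I}-\mathcal{A}^{(1)}*_s\mathcal{A}$ from the proof of Lemma \ref{Chai1Inverse2T}.

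\emph{Preliminaries.} Since $\|\mathcal{A}^{(1)}*_s\mathcal{E}\|\le\|\mathcal{A}^{(1)}\|\|\mathcal{E}\|<1$ and similarly for $\mathcal{E}*_t\mathcal{A}^{(1)}$, Lemma \ref{le-inv} shows that $\bigdelta$ and $\bigrho$ exist. The push-through identity $(\mathcal{I}+\mathcal{A}^{(1)}*_s\mathcal{E})*_t\mathcal{A}^{(1)}=\mathcal{A}^{(1)}*_s(\mathcal{I}+\mathcal{E}*_t\mathcal{A}^{(1)})$ then gives $\mathcal{H}=\mathcal{A}^{(1)}*_s\bigrho=\bigdelta*_t\mathcal{A}^{(1)}$. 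Expanding $\mathcal{D}=\mathcal{A}+\mathcal{E}$ gives
\[
\mathcal{A}^{(1)}*_s\mathcal{D}=(\mathcal{I}+\mathcal{A}^{(1)}*_s\mathcal{E})-\bigAlpha .
\]
Multiplying on the left by $\bigdelta$ yields the key formula $\mathcal{I}-\mathcal{H}*_s\mathcal{D}=\bigdelta*_t\bigAlpha$. We also use $\aunclfamily{\bf R}(\bigAlpha)=\aunclfamily{\bf N}(\mathcal{A})$: the inclusion $\subseteq$ follows from $\mathcal{A}*_t\bigAlpha=\mathcal{O}$, and $\supseteq$ from $\bigAlpha*_t\mathcal{X}=\mathcal{X}$ whenever $\mathcal{X}\in\aunclfamily{\bf N}(\mathcal{A})$.

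\emph{(i)$\Leftrightarrow$(iii).} Statement (i) means $\mathcal{D}*_t(\mathcal{I}-\mathcal{H}*_s\mathcal{D})=\mathcal{O}$, i.e. $\mathcal{D}*_t\bigdelta*_t\bigAlpha=\mathcal{O}$. By the range identity this is exactly $\bigdelta*_t\aunclfamily{\bf N}(\mathcal{A})\subseteq\aunclfamily{\bf N}(\mathcal{D})$.

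The reverse inclusion holds unconditionally. Take $\mathcal{X}\in\aunclfamily{\bf N}(\mathcal{D})$. Then $\mathcal{A}^{(1)}*_s\mathcal{D}*_t\mathcal{X}=\mathcal{O}$, which by the expansion above gives $(\mathcal{I}+\mathcal{A}^{(1)}*_s\mathcal{E})*_t\mathcal{X}=\bigAlpha*_t\mathcal{X}$. Hence $\mathcal{X}=\bigdelta*_t\bigAlpha*_t\mathcal{X}\in\bigdelta*_t\aunclfamily{\bf N}(\mathcal{A})$. So (i) is equivalent to the equality in (iii).

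\emph{(ii)$\Rightarrow$(i).} For any $\mathcal{X}$, put $\mathcal{Z}=\mathcal{D}*_t\bigdelta*_t\bigAlpha*_t\mathcal{X}\in\aunclfamily{\bf R}(\mathcal{D})$. From the expansion, $\mathcal{A}^{(1)}*_s\mathcal{D}*_t\bigdelta=\mathcal{I}-\bigAlpha*_t\bigdelta$, so
\[
\mathcal{A}*_t\mathcal{A}^{(1)}*_s\mathcal{Z}=\mathcal{A}*_t(\bigAlpha-\bigAlpha*_t\bigdelta*_t\bigAlpha)*_t\mathcal{X}=\mathcal{O},
\]
because $\mathcal{A}*_t\bigAlpha=\mathcal{O}$. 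Thus $\mathcal{Z}\in\aunclfamily{\bf N}(\mathcal{A}*_t\mathcal{A}^{(1)})\cap\aunclfamily{\bf R}(\mathcal{D})$, and (ii) forces $\mathcal{Z}=\mathcal{O}$. Hence $\mathcal{D}*_t\bigdelta*_t\bigAlpha=\mathcal{O}$, which is (i).

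\emph{(i)$\Rightarrow$(ii).} This is the step that needs the most care. It is not simply the condition $\aunclfamily{\bf N}(\mathcal{A}^{(1)})\cap\aunclfamily{\bf R}(\mathcal{D})=\{0\}$ of Lemma \ref{Chai1Inverse2T}, because $\aunclfamily{\bf N}(\mathcal{A}*_t\mathcal{A}^{(1)})$ may be larger than $\aunclfamily{\bf N}(\mathcal{A}^{(1)})$. The argument I would use is as follows. Let $\mathcal{Y}=\mathcal{D}*_t\mathcal{X}$ satisfy $\mathcal{A}*_t\mathcal{A}^{(1)}*_s\mathcal{D}*_t\mathcal{X}=\mathcal{O}$. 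Then $\mathcal{A}^{(1)}*_s\mathcal{D}*_t\mathcal{X}\in\aunclfamily{\bf N}(\mathcal{A})$, so by (iii), already known to follow from (i), $\bigdelta*_t\mathcal{A}^{(1)}*_s\mathcal{D}*_t\mathcal{X}\in\aunclfamily{\bf N}(\mathcal{D})$. Therefore
\[
\mathcal{Y}=\mathcal{D}*_t\mathcal{H}*_s\mathcal{D}*_t\mathcal{X}=\mathcal{D}*_t\bigdelta*_t\mathcal{A}^{(1)}*_s\mathcal{D}*_t\mathcal{X}=\mathcal{O},
\]
which proves (ii).
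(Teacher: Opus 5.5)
Your proof is correct, and it is organized differently from the paper's. The paper proves (i)$\Leftrightarrow$(ii) and (ii)$\Leftrightarrow$(iii). For (i)$\Rightarrow$(ii) it takes $\mathcal{Y}\in\aunclfamily{\bf N}(\mathcal{A}*_t\mathcal{A}^{(1)})$ and invokes condition (ii) of Lemma \ref{Chai1Inverse2T} to get $\aunclfamily{\bf N}(\mathcal{A}*_t\mathcal{A}^{(1)})\subseteq\aunclfamily{\bf N}(\mathcal{D}*_t\mathcal{H})$. It then uses that $\mathcal{D}*_t\mathcal{H}$ is an idempotent with range $\aunclfamily{\bf R}(\mathcal{D})$, so its null space meets $\aunclfamily{\bf R}(\mathcal{D})$ only in $\{0\}$.

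You instead isolate the identity $\mathcal{I}-\mathcal{H}*_s\mathcal{D}=\bigdelta*_t\bigAlpha$ together with $\aunclfamily{\bf R}(\bigAlpha)=\aunclfamily{\bf N}(\mathcal{A})$. This shows that (i) is exactly the inclusion $\bigdelta*_t\aunclfamily{\bf N}(\mathcal{A})\subseteq\aunclfamily{\bf N}(\mathcal{D})$, while the reverse inclusion holds for every admissible $\mathcal{E}$. So (i)$\Leftrightarrow$(iii) is immediate, and your (i)$\Rightarrow$(ii) becomes essentially the paper's (iii)$\Rightarrow$(ii) computation. Your (ii)$\Rightarrow$(i) and your unconditional inclusion $\aunclfamily{\bf N}(\mathcal{D})\subseteq\bigdelta*_t\aunclfamily{\bf N}(\mathcal{A})$ coincide with the paper's (ii)$\Rightarrow$(i) and the first half of its (ii)$\Rightarrow$(iii).

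Your route gives a shorter, self-contained argument that never uses Lemma \ref{Chai1Inverse2T}, and it makes clear that only one inclusion in (iii) carries content. The paper's route ties the result back to Lemma \ref{Chai1Inverse2T}, at the cost of a longer detour, and yields the inclusion $\aunclfamily{\bf N}(\mathcal{A}*_t\mathcal{A}^{(1)})\subseteq\aunclfamily{\bf N}(\mathcal{D}*_t\mathcal{H})$ as a by-product. Your remark that (ii) here is stronger than condition (i) of Lemma \ref{Chai1Inverse2T}, since $\aunclfamily{\bf N}(\mathcal{A}^{(1)})\subseteq\aunclfamily{\bf N}(\mathcal{A}*_t\mathcal{A}^{(1)})$ can be strict, is accurate.
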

\begin{proof}
The notations $\bigAlpha = \mathcal{I} - \mathcal{A}^{(1)}*_s \mathcal{A}$ and $\bigbBeta=\mathcal{A}^{(1)}*_s \mathcal{A} - \mathcal{I}$  will be used in further proof.

$(i)$\(\Rightarrow\) $(ii)$. The condition \( \aunclfamily{\bf R}(\mathcal{D}) \cap \aunclfamily{\bf N}(\mathcal {H}*_s \mathcal{D}) = \{0\} \) is satisfied for arbitrary inner inverse \( \mathcal {H} \) of \( \mathcal{D}\).
For any \(  \mathcal{Y} \in \aunclfamily{\bf N}(\mathcal{A}*_t \mathcal{A}^{(1)}) \), it follows \( \mathcal{A}^{(1)}*_s \mathcal{Y} \in \aunclfamily{\bf N}(\mathcal{A}) \) and
\(  \mathcal{Y} + \mathcal{E}*_t \mathcal{A}^{(1)}*_s \mathcal{Y} =  \mathcal{Y} +\mathcal{D}*_t \mathcal{A}^{(1)}*_s \mathcal{Y} \).
Hence,
\[
 \mathcal{Y} = \bigrho*_s \mathcal{Y} + \bigrho *_s \mathcal{D}*_t \mathcal{A}^{(1)}*_s \mathcal{Y}.
\]
Thus, by Lemma \ref{Chai1Inverse2T}, we obtain
\[
(\mathcal{A}^{(1)}*_s \mathcal{A}*_t \mathcal{A}^{(1)} - \mathcal{A}^{(1)})*_s \mathcal{Y} = (\mathcal{A}^{(1)}*_s \mathcal{A}*_t \mathcal{A}^{(1)} - \mathcal{A}^{(1)})*_s \bigrho *_s \mathcal{Y},
\]
i.e., \( \mathcal{A}^{(1)}*_s \mathcal{Y} = (\mathcal{I} - \mathcal{A}^{(1)}*_s \mathcal{A})*_t \mathcal {H}*_s \mathcal{Y} \).
Therefore,
\begin{align*}
\mathcal{A}^{(1)}*_s \mathcal{D}*_t \mathcal {H}*_s \mathcal{Y} &= \mathcal{A}^{(1)}*_s (\mathcal{A} + \mathcal{E})*_t \mathcal {H}*_s \mathcal{Y} \\
&= (\bigbBeta + \mathcal{I} + \mathcal{A}^{(1)}*_s \mathcal{E})*_t \mathcal {H}*_s \mathcal{Y} \\
&= \bigbBeta*_t \mathcal {H}*_s \mathcal{Y} + \mathcal{A}^{(1)}*_s \mathcal{Y} = 0
\end{align*}
and so \( \mathcal {H}*_s \mathcal{D}*_t \mathcal {H}*_s \mathcal{Y} = 0 \).
Hence, \( \mathcal{D}*_t \mathcal {H}*_s \mathcal{Y} = \mathcal{D}*_t \mathcal {H}*_s \mathcal{D}*_t \mathcal {H}*_s \mathcal{Y} = 0 \), i.e.,
\(  \mathcal{Y} \in \aunclfamily{\bf N}(\mathcal{D}*_t \mathcal {H}) \).
This implies \( \aunclfamily{\bf N}(\mathcal{A}*_t \mathcal{A}^{(1)}) \subseteq \aunclfamily{\bf N}(\mathcal{D}*_t \mathcal {H}) \).
Utilizing \(\aunclfamily{\bf R}(\mathcal{D}) \cap \aunclfamily{\bf N}(\mathcal{D}*_t \mathcal {H}) = \{0\} \), we obtain
\( \aunclfamily{\bf N}(\mathcal{A}*_t \mathcal{A}^{(1)}) \cap\aunclfamily{\bf R}(\mathcal{D})= \{0\} \).

$(ii)$ \(\Rightarrow\) $(i)$. For all \(\mathcal{X}\in \mathbb{C}^{ T(t)} \), it can be concluded
\begin{align*}
\mathcal{A}*_t \mathcal{A}^{(1)}*_s (\mathcal{D}*_t \mathcal {H}*_s \mathcal{D}*_t \mathcal X - \mathcal{D}*_t \mathcal X) &= \mathcal{A}*_t \mathcal{A}^{(1)}*_s (\mathcal{A} + \mathcal{E})*_t (\mathcal {H}*_s \mathcal{D} - \mathcal{I})*_t \mathcal X \\
% &= \mathcal{A}*_t \left(\bigbBeta + \mathcal{I} + \mathcal{A}^{(1)}*_s \mathcal{E}\right)*_t \bigdelta  *_t \left(\mathcal{A}^{(1)}*_s \mathcal{D} - \mathcal{I} - \mathcal{A}^{(1)}*_s \mathcal{E}\right)*_t \mathcal X \\
&= \mathcal{A}*_t (\mathcal{I} + \mathcal{A}^{(1)}*_s \mathcal{E})*_t\bigdelta*_t\bigbBeta*_t \mathcal X = 0,
\end{align*}
which further leads to
\( \mathcal{D}*_t \mathcal {H}*_s \mathcal{D}*_t \mathcal X - \mathcal{D}*_t \mathcal X \in \aunclfamily{\bf N}(\mathcal{A}*_t \mathcal {H}) \).
Combining the last identity with \( \mathcal{D}*_t \mathcal {H}*_s \mathcal{D}*_t \mathcal X - \mathcal{D}*_t \mathcal X \in \aunclfamily{\bf R}(\mathcal{D}) \),
by $(ii)$, it can be obtained
\( \mathcal{D}*_t \mathcal {H}*_s \mathcal{D}*_t \mathcal X = \mathcal{D}*_t \mathcal X \), i.e., \( \mathcal {H} \) is an inner inverse of \( \mathcal{D} \).

\smallskip
\noindent $(iii)$ \(\Rightarrow\) $(ii)$.
The assumption \(  \mathcal{Y} \in \aunclfamily{\bf R}(\mathcal{D}) \cap \aunclfamily{\bf N}(\mathcal{A}*_t \mathcal{A}^{(1)}) \) ensures the existence of
\(\mathcal{X}\in \mathbb{C}^{ T(t)} \) with the properties \(  \mathcal{Y} = \mathcal{D}*_t \mathcal X \) and \( \mathcal{A}*_t \mathcal{A}^{(1)}*_s \mathcal{D}*_t \mathcal X = 0 \).
Hence,
\begin{align*}
\mathcal{A}*_t (\mathcal{I} + \mathcal{A}^{(1)}*_s \mathcal{E})*_t \mathcal X &= \mathcal{A}*_t \mathcal X + \mathcal{A}*_t \mathcal{A}^{(1)}*_s \mathcal{E}*_t \mathcal X \\
&= \mathcal{A}*_t \mathcal{A}^{(1)}*_s \mathcal{D}*_t \mathcal X = 0.\end{align*}

This means \( (\mathcal{I} + \mathcal{A}^{(1)}*_s \mathcal{E})*_t \mathcal X \in \aunclfamily{\bf N}(\mathcal{A}) \).
By $(iii)$, \(  \mathcal X \in \aunclfamily{\bf N}(\mathcal{D}) \), and so \(  \mathcal{Y} = \mathcal{D}*_t \mathcal X = 0 \).
Now we get \( \aunclfamily{\bf R}(\mathcal{D}) \cap \aunclfamily{\bf N}(\mathcal{A}*_t \mathcal{A}^{(1)}) = \{0\} \).

\smallskip
\noindent $(ii)$ \(\Rightarrow\) $(iii)$.\ \ On the one hand, let $\mathcal{Y} \in \aunclfamily{\bf N} (\mathcal{D})$ and  $\mathcal X=(\mathcal{I}+\mathcal{A}^{(1)}*_s \mathcal{E})*_t \mathcal{Y}$.
Hence, $\mathcal{D}*_t \mathcal{Y}=(\mathcal{A}+\mathcal{E})*_t \mathcal{Y}=0$,\ \ $\mathcal{A}*_t \mathcal{Y}=-\mathcal{E}*_t \mathcal{Y}$. Then
\begin{align*}
\mathcal{A}*_t \mathcal X
&=\mathcal{A}*_t \mathcal{Y}+\mathcal{A}*_t \mathcal{A}^{(1)}*_s \mathcal{E}*_t \mathcal{Y}\\
&=(\mathcal{I}-\mathcal{A}*_t \mathcal{A}^{(1)})*_s \mathcal{A}*_t \mathcal{Y}=0.
\end{align*}
So, $\mathcal X\in \aunclfamily{\bf N}(\mathcal{A})$, and then $\mathcal{Y}=\bigdelta*_t \mathcal X$ and $\bigdelta *_t \aunclfamily{\bf N}(\mathcal{A})\supseteq \aunclfamily{\bf N}(\mathcal{D}).$

\smallskip
On the other, the assumption $\mathcal X \in \aunclfamily{\bf N}(\mathcal{A})$ and $\mathcal{Y}=\bigdelta *_t \mathcal X$ leads to $(\mathcal{I}+\mathcal{A}^{(1)}*_s \mathcal{E})*_t \mathcal{Y}=\mathcal X$, $\mathcal{Y}=\mathcal X-\mathcal{A}^{(1)}*_s \mathcal{E}*_t \mathcal{Y}$, which implies
\[
\mathcal{A}*_t \mathcal{Y}=\mathcal{A}*_t \mathcal X-\mathcal{A}*_t \mathcal{A}^{(1)}*_s \mathcal{E}*_t \mathcal{Y}=-\mathcal{A}*_t \mathcal{A}^{(1)}*_s \mathcal{E}*_t \mathcal{Y}.
\]
Consider
$\mathcal{D}*_t \mathcal{Y}=(\mathcal{A}+\mathcal{E})*_t \mathcal{Y}=\mathcal{E}*_t \mathcal{Y}-\mathcal{A}*_t \mathcal{A}^{(1)}*_s \mathcal{E}*_t \mathcal{Y}=(\mathcal{I}-\mathcal{A}*_t \mathcal{A}^{(1)})*_s \mathcal{E}*_t \mathcal{Y}$,
i.e., $\mathcal{D}*_t \mathcal{Y}\in \aunclfamily{\bf N}(\mathcal{A}*_t \mathcal{A}^{(1)})$.
In this case, based on $(ii)$, it can be concluded that  $\mathcal{D}*_t \mathcal{Y}=0$.
Accordingly, $\mathcal{Y} \in \aunclfamily{\bf N}(\mathcal{D})$, and further
$\bigdelta *_t \aunclfamily{\bf N}(\mathcal{A})\subseteq \aunclfamily{\bf N}(\mathcal{D})$,
which means \( \bigdelta *_t \aunclfamily{\bf N}(\mathcal{A}) = \aunclfamily{\bf N}(\mathcal{D}) \).
\end{proof}

The following theorem is studied in the context of Banach spaces in \cite[Corollary 2.2]{LAA94}.
We have especially renewed our approach to prove this result using a new method.

\begin{theorem}\label{Chai1Inverse1Trank}(Finite Rshrank Theorem)
Let \( \mathcal{A} \in \mathbb{C}^{S(s) \times T(t)} \) be a tensor with an inner inverse \( \mathcal{A}^{(1)} \in \mathbb{C}^{T(t) \times S(s)} \) satisfy \(\rra{\mathcal{A}} < +\infty\).
If \(\mathcal{E} \in \mathbb{C}^{S(s) \times T(t)}\) satisfies the condition \(\|\mathcal{A}^{(1)}\|\|\mathcal{E}\| < 1\), then
 \( \mathcal {H} = \mathcal{A}^{(1)}*_s \bigrho \) is an inner inverse of \( \mathcal{D} = \mathcal{A} + \mathcal{E} \) if and only if
\[
\rra{\mathcal{D}} =\rra{\mathcal{A}} < +\infty.
\]
\end{theorem}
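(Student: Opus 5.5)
The plan is to pass to the reshaped matrices, where rshrank is the ordinary rank, and to combine Lemma \ref{Chai1Inverse5T} with a dimension count. Write $A=\mathrm{rsh}(\mathcal{A})$, $D=\mathrm{rsh}(\mathcal{D})$, $A^{(1)}=\mathrm{rsh}(\mathcal{A}^{(1)})$, $E=\mathrm{rsh}(\mathcal{E})$. Since rsh turns the Einstein product into the matrix product, all range and null space statements transfer verbatim. In particular, $\rra{\mathcal{A}}=\dim \aunclfamily{\bf R}(\mathcal{A})=\mathfrak{T}-\dim\aunclfamily{\bf N}(\mathcal{A})$, and similarly for $\mathcal{D}$. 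The hypothesis $\|\mathcal{A}^{(1)}\|\|\mathcal{E}\|<1$ together with Lemma \ref{le-inv} makes $\bigdelta=(\mathcal{I}+\mathcal{A}^{(1)}*_s\mathcal{E})^{-1}$ and $\bigrho$ well defined and invertible.

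\textbf{Necessity.} If $\mathcal{H}$ is an inner inverse of $\mathcal{D}$, then Lemma \ref{Chai1Inverse5T} $(i)\Rightarrow(iii)$ gives $\bigdelta*_t\aunclfamily{\bf N}(\mathcal{A})=\aunclfamily{\bf N}(\mathcal{D})$. Because $\bigdelta$ is invertible, it is a linear isomorphism of $\mathbb{C}^{T(t)}$, so $\dim\aunclfamily{\bf N}(\mathcal{D})=\dim\aunclfamily{\bf N}(\mathcal{A})$. Rank–nullity on the reshaped matrices then yields $\rra{\mathcal{D}}=\rra{\mathcal{A}}$, and finiteness is automatic in this finite-dimensional setting.

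\textbf{Sufficiency.} Assume $\rra{\mathcal{D}}=\rra{\mathcal{A}}=r$. By Lemma \ref{Chai1Inverse5T} it suffices to verify $(iii)$, namely $\bigdelta*_t\aunclfamily{\bf N}(\mathcal{A})=\aunclfamily{\bf N}(\mathcal{D})$. The $(ii)\Rightarrow(iii)$ part of that proof already contains an unconditional inclusion. For $\mathcal{Y}\in\aunclfamily{\bf N}(\mathcal{D})$, set $\mathcal{X}=(\mathcal{I}+\mathcal{A}^{(1)}*_s\mathcal{E})*_t\mathcal{Y}$. The computation $\mathcal{A}*_t\mathcal{X}=(\mathcal{I}-\mathcal{A}*_t\mathcal{A}^{(1)})*_s\mathcal{A}*_t\mathcal{Y}=0$ uses only $\mathcal{A}*_t\mathcal{A}^{(1)}*_s\mathcal{A}=\mathcal{A}$, so $\aunclfamily{\bf N}(\mathcal{D})\subseteq\bigdelta*_t\aunclfamily{\bf N}(\mathcal{A})$ always holds. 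Both spaces have dimension $\mathfrak{T}-r$: the left side by the rank assumption, and the right side because $\bigdelta$ is injective. Hence the inclusion is an equality, which gives $(iii)$, and therefore $(i)$.

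The main obstacle is making the dimension bookkeeping rigorous in tensor language. One must check that $\rra{\cdot}$ as defined via $EAP=\begin{bmatrix} I_r & K\\ O & O\end{bmatrix}$ equals $\dim\aunclfamily{\bf R}(\mathcal{A})$. One must also check that the tensor null space $\aunclfamily{\bf N}(\mathcal{A})\subseteq\mathbb{C}^{T(t)}$ corresponds under rsh to the null space of $A$ in $\mathbb{C}^{\mathfrak{T}}$, so that rank–nullity applies. Both facts follow because rsh is a linear bijection that is multiplicative for $*_t$, and I will state this correspondence explicitly before running the argument above.
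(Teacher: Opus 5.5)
Your proof is correct. The necessity half matches the paper's. The paper writes that $\bigrho$ is invertible, but it is $\bigdelta$ that acts on $\aunclfamily{\bf N}(\mathcal{A})$ in $(iii)$, which is how you use it.

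Your sufficiency half takes a different and tighter route. The paper targets condition $(ii)$ of Lemma \ref{Chai1Inverse5T}. It observes that $\aunclfamily{\bf N}(\mathcal{A}*_t\mathcal{A}^{(1)})$ is a complement of $\aunclfamily{\bf R}(\mathcal{A})$. It then argues that a nonzero $\mathcal{Y}\in\aunclfamily{\bf R}(\mathcal{D})\cap\aunclfamily{\bf N}(\mathcal{A}*_t\mathcal{A}^{(1)})$ would give $\rra{\mathcal{D}}\geq\rra{\mathcal{A}}$, which as written is not a contradiction. To close that argument one needs an extra fact: $\mathcal{A}*_t\mathcal{A}^{(1)}*_s\mathcal{D}=\mathcal{A}*_t(\mathcal{I}+\mathcal{A}^{(1)}*_s\mathcal{E})$ has range $\aunclfamily{\bf R}(\mathcal{A})$. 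Hence $\mathcal{A}*_t\mathcal{A}^{(1)}$ maps $\aunclfamily{\bf R}(\mathcal{D})$ onto $\aunclfamily{\bf R}(\mathcal{A})$, and any nonzero kernel on $\aunclfamily{\bf R}(\mathcal{D})$ forces $\rra{\mathcal{D}}>\rra{\mathcal{A}}$.

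You sidestep this by targeting $(iii)$ instead. You combine two facts:
\begin{itemize}
\item the inclusion $\aunclfamily{\bf N}(\mathcal{D})\subseteq\bigdelta*_t\aunclfamily{\bf N}(\mathcal{A})$, which you correctly extract from the paper's $(ii)\Rightarrow(iii)$ step and which uses only $\mathcal{A}*_t\mathcal{A}^{(1)}*_s\mathcal{A}=\mathcal{A}$;
\item equality of the two dimensions.
\end{itemize}
Together these give $(iii)$ at once.

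Your approach makes both directions the same null-space dimension count through the isomorphism $\bigdelta$, and it needs no contradiction argument. The paper's route, once repaired, gives instead the range-side picture that $\aunclfamily{\bf R}(\mathcal{D})$ is a complement of $\aunclfamily{\bf N}(\mathcal{A}*_t\mathcal{A}^{(1)})$. Both arguments rest only on finite dimensionality, so the clause $\rra{\mathcal{A}}<+\infty$ is vacuous, as you note.
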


\begin{proof}
The definition $\mathfrak{N}=\prod_{i=1}^n N_i$ and the definition $\mathrm{RSH}\left(\aunclfamily{\bf N}(A)\right)=\left\{\mathrm{rsh}(Q)\mid Q\in \aunclfamily{\bf N}(A)\right\}$ of the RSH operator from \cite{LMA685} will be useful.

Necessity follows from the part $(iii)$ of Lemma \ref{Chai1Inverse5T}.
Indeed, since $\bigrho$ is invertible, its effect does not change the dimension of the space.
Therefore, it follows
$\mathrm{dim}(\aunclfamily{\bf N}(\mathcal{D}))=\mathrm{dim}(\aunclfamily{\bf N}(\mathcal{A}))=\mathrm{dim}(RSH(\aunclfamily{\bf N}(A)))$.
After that, it follows $\mathrm{dim}(\aunclfamily{\bf N}(\mathcal{A}))=\mathrm{dim}(\aunclfamily{\bf N}(A))=\mathfrak{N}-\mathrm{dim}(\aunclfamily{\bf R}(\mathcal{A}))$.
As a result, we conclude $\mathrm{dim}(\aunclfamily{\bf R}(\mathcal{D}))=\mathrm{dim}(\aunclfamily{\bf R}(\mathcal{A}))$, or equivalently $\rra{\mathcal{D}}=\rra{\mathcal{A}}$.

% $\mathcal{A}$   and $\mathcal{B}$ have the same size, then  $rshrank(\mathcal{B})=rshrank(\mathcal{A})$.

% Assume there exists a nonzero  \( \mathcal{Y} \in \aunclfamily{\bf R}(\mathcal{B}) \cap \aunclfamily{\bf N}(\mathcal{A}*_t \mathcal{A}^{(1)}) \). Then there exists \( \mathcal X \) such that \( \mathcal{Y} = \mathcal{B}*_s \mathcal X = (\mathcal{A} + \mathcal{E})*_t \mathcal X \), and \( \mathcal{A}*_t \mathcal{A}^{(1)}*_s \mathcal{Y} = 0 \).

% So we obtain:
% \[
% \mathcal{A}*_t \mathcal{A}^{(1)}*_s \mathcal{Y} =\mathcal{A}*_t \mathcal{A}^{(1)}*_s (\mathcal{A} + \mathcal{E})*_t \mathcal X = \mathcal{A}*_s \mathcal X + \mathcal{A}*_t \mathcal{A}^{(1)}*_s \mathcal{E}*_s \mathcal X = 0.
% \]

% Then, \(\mathcal{A}*_s \mathcal X = -\mathcal{A}*_t \mathcal{A}^{(1)}*_s \mathcal{E}*_s \mathcal X \) indicates that \( \mathcal{A}*_s \mathcal X \in \aunclfamily{\bf R}(\mathcal{A}) \), then  \( \mathcal{A}*_t \mathcal{A}^{(1)}*_s \mathcal{E}*_s \mathcal X \) also belongs to \( \aunclfamily{\bf R}(\mathcal{A}) \), i.e., $\aunclfamily{\bf R}(\mathcal{A}*_t\mathcal{A}^{(1)})\subseteq \aunclfamily{\bf R}(\mathcal{A})$.

Next, we shall show the sufficiency.
Given that \(\rra{\mathcal{A}}=\rra{\mathcal{D}}\), it follows $\mathrm{dim}(\aunclfamily{\bf R}(\mathcal{A}))=\mathrm{dim}(\aunclfamily{\bf R}(\mathcal{D}))$.
Since $\mathcal{A}*_t \mathcal{A}^{(1)}$ is a projection onto $\aunclfamily{\bf R} (\mathcal{A})$, it follows that $\aunclfamily{\bf R}(\mathcal{A})=\aunclfamily{\bf R}(\mathcal{A}*_t\mathcal{A}^{(1)})$ and
$\aunclfamily{\bf N} (\mathcal{A}*_t \mathcal{A}^{(1)})$ is the complement space of $\aunclfamily{\bf R} (\mathcal{A})$.
Assume there exists a nonzero element \( \mathcal{Y} \in \aunclfamily{\bf R}(\mathcal{D}) \cap \aunclfamily{\bf N}(\mathcal{A}*_t \mathcal{A}^{(1)}) \).
This means that \( \aunclfamily{\bf R}(\mathcal{D}) \) contains a nonzero element intersecting with \( \aunclfamily{\bf N}(\mathcal{A}*_t \mathcal{A}^{(1)}) \).
Consequently, we would have $\rra{\mathcal{D}}=\mathrm{dim}(\aunclfamily{\bf R}(\mathcal{D}))\geq \mathrm{dim}(\aunclfamily{\bf R}(\mathcal{A}))=\rra{\mathcal{A}}$, which contradicts the assumed reshape rank equality.
Therefore, the only possibility is $\mathcal{Y}=0$.
Hence, $\aunclfamily{\bf R}(\mathcal{D})\cap\aunclfamily{\bf N}(\mathcal{A}*_t \mathcal{A}^{(1)}) = \{0\} $.
According to Lemma \ref{Chai1Inverse5T}, the tensor \( \mathcal {H} = \mathcal{A}^{(1)}*_s \bigrho \) is an inner inverse of \( \mathcal{D} = \mathcal{A} + \mathcal{E} \).
\end{proof}

Algorithm \ref{Alg1} defines an effective procedure to compute $(\mathcal{A} + \mathcal{E})^{(1)}$ by Theorem \ref{Chai1Inverse1Trank}.

\begin{algorithm}[H] \caption{Computation of $(\mathcal{A} + \mathcal{E})^{(1)}$ based on Theorem \ref{Chai1Inverse1Trank}} \label{Alg1}
\begin{algorithmic}[1]
\smallskip
\REQUIRE  $\mathcal{A} \in \mathbb{C}^{S(s) \times T(t)}$ and  $\mathcal{E} \in \mathbb{C}^{S(s) \times T(t)}$

\STATE Compute $ \Vert \mathcal{A}^{(1)}\Vert$ and $\Vert \mathcal{E}\Vert$
\STATE  Compute $\rra{\mathcal{D}}$ and $\rra{\mathcal{A}}$ using Algorithm 3 from \cite{LMA685}
\IF { $ \Vert \mathcal{A}^{(1)}\Vert\Vert \mathcal{E}\Vert<1$ and $\rra{\mathcal{D}}= \rra{\mathcal{A}}$}
\STATE Compute $\mathcal{A}^{(1)}$ using Algorithm 4 from \cite{LMA685}
\STATE  Compute  $\mathcal{A}^{(1)}*_s\bigrho$
\RETURN $(\mathcal{A} + \mathcal{E})^{(1)} =\mathcal{A}^{(1)}*_s\bigrho$
\ENDIF
\end{algorithmic}
\end{algorithm}

\begin{example}\label{expAplusE1inverse}
Assume  $\mathcal{A}\in\mathbb{R}^{(2\times 2)\times (2\times 2)}$, with entries
\begin{center}
$\mathcal{A}(:,:,1,1) = \begin{bmatrix} \ 1 & 1 \\ 1 & 1\ \end{bmatrix}; \quad \mathcal{A}(:,:,1,2) = \begin{bmatrix} \ 0 & 0 \\ 0 & -1\ \end{bmatrix}$\\
$\mathcal{A}(:,:,2,1) = \begin{bmatrix} \ 0 & 0 \\ 0 & -1\ \end{bmatrix}; \quad \mathcal{A}(:,:,2,2) = \begin{bmatrix} \ 0 & 0 \\ 0 & -1\ \end{bmatrix}$.

\end{center}
Consider perturbation tensor $\mathcal{E}_1\in\mathbb{R}^{(2\times 2)\times (2\times 2)}$ with
\begin{center}
$\mathcal{E}_1(:,:,1,1) = \begin{bmatrix} \ 0 & 0.1 \\ 0 & -0.5\ \end{bmatrix}; \quad \mathcal{E}_1(:,:,1,2) = \begin{bmatrix} \ 0 & 0 \\ 0 & 0\ \end{bmatrix}$\\
$\mathcal{E}_1(:,:,2,1) = \begin{bmatrix} \ 0 & 0.5 \\ 0 & 0\ \end{bmatrix}; \quad \mathcal{E}_1(:,:,2,2) = \begin{bmatrix} \ 0 & 0 \\ 0 & 0.5\ \end{bmatrix}$.

\end{center}
Next, applying  Algorithm 4 from \cite{LMA685}, we calculate $\mathcal{A}^{(1)}$:
\begin{center}
$\mathcal{A}^{(1)}(:,:,1,1) = \begin{bmatrix} \ 0.3333 & 0.1111 \\ 0.1111 & 0.1111\ \end{bmatrix}; \quad \mathcal{A}^{(1)}(:,:,1,2) = \begin{bmatrix} \ 0.3333 & 0.1111 \\ 0.1111 & 0.1111\ \end{bmatrix}$\\
$\mathcal{A}^{(1)}(:,:,2,1) = \begin{bmatrix} \ 0.3333 & 0.1111 \\ 0.1111 & 0.1111\ \end{bmatrix}; \quad \mathcal{A}^{(1)}(:,:,2,2) = \begin{bmatrix} \ 0 & -0.3333 \\ -0.3333 & -0.3333\ \end{bmatrix}$.
\end{center}
Under the F-norm, the norms of $\mathcal{A}^{(1)}$ and $\mathcal{E}_1$ can be calculated respectively, and the results are
\begin{center}
$\Vert \mathcal{A}^{(1)} \Vert_F=0.8819$, \ \ \ $\Vert \mathcal{E}_1 \Vert_F=0.8718$, \ \ \ $\Vert \mathcal{A}^{(1)}\Vert_F\Vert \mathcal{E}_1 \Vert_F=0.7688<1$.
\end{center}
Then $\rra{\mathcal{A}}=2,\ \  \rra{\mathcal{A}+\mathcal{E}_1}=3$, and calculation based on  Algorithm \ref{Alg1} gives
\begin{center}$\mathcal{D}^{(1)}(:,:,1,1) = \begin{bmatrix} \ 0.3125 & 0.0625 \\ 0.0625 & 0.0625\ \end{bmatrix}; \quad \mathcal{D}^{(1)}(:,:,1,2) = \begin{bmatrix} \ 0.3125 & 0.0625 \\ 0.0625 & 0.0625\ \end{bmatrix}$\\
$\mathcal{D}^{(1)}(:,:,2,1) = \begin{bmatrix} \ 0.3125 & 0.0625 \\ 0.0625 & 0.0625\ \end{bmatrix}; \quad \mathcal{D}^{(1)}(:,:,2,2) = \begin{bmatrix} \ 0.0625 & -0.3875 \\ -0.3875 & -0.3875\ \end{bmatrix}$.
 \end{center}
However $(\mathcal{A}+\mathcal{E}_1)*_2\mathcal{D}^{(1)}*_2(\mathcal{A}+\mathcal{E}_1) \neq \mathcal{A}+\mathcal{E}_1$, thus $\mathcal{D}^{(1)}=\mathcal{A}^{(1)}*_2(\mathcal{I} + \mathcal{E}_1*_2\mathcal{A}^{(1)})^{-1}$ is not the inner inverse of $\mathcal{A}+\mathcal{E}_1$.

Next consider perturbation tensor $\mathcal{E}_2\in\mathbb{R}^{(2\times 2)\times (2\times 2)}$ with entries
\begin{center}
$\mathcal{E}_2(:,:,1,1) = \begin{bmatrix} \ 0 & 0 \\ 0 & -0.5\ \end{bmatrix}; \quad \mathcal{E}_2(:,:,1,2) = \begin{bmatrix} \ 0 & 0 \\ 0 & 0\ \end{bmatrix};$
$\mathcal{E}_2(:,:,2,1) = \begin{bmatrix} \ 0 & 0 \\ 0 & 0\ \end{bmatrix}; \quad \mathcal{E}_2(:,:,2,2) = \begin{bmatrix} \ 0 & 0 \\ 0 & 0.5\ \end{bmatrix}$.
\end{center}
Further calculion gives
\begin{center}
$\Vert \mathcal{E}_2 \Vert_F=0.7071$,\ \ \
$\Vert \mathcal{A}^{(1)}\Vert_F\Vert \mathcal{E}_2 \Vert_F=0.6236<1$.
\end{center}
Then  $\rra{\mathcal{A}}=2,\ \  \rra{\mathcal{A}+\mathcal{E}_2}=2$, meet the conditions of  Algorithm \ref{Alg1}, we can note
\begin{center}$(\mathcal{A} +\mathcal{E}_2)^{(1)}(:,:,1,1) = \begin{bmatrix} \ 0.3333 & 0.0667 \\ 0.0667 & 0.0667\ \end{bmatrix}; \quad (\mathcal{A} +\mathcal{E}_2)^{(1)}(:,:,1,2) = \begin{bmatrix} \ 0.3333 & 0.0667 \\ 0.0667 & 0.0667\ \end{bmatrix}$;\\
$(\mathcal{A} +\mathcal{E}_2)^{(1)}(:,:,2,1) = \begin{bmatrix} \ 0.3333 & 0.0667 \\ 0.0667 & 0.0667\ \end{bmatrix}; \quad (\mathcal{A} +\mathcal{E}_2)^{(1)}(:,:,2,2) = \begin{bmatrix} \ 0 & -0.4 \\ -0.4 & -0.4\ \end{bmatrix}$.
\end{center}
It can be verified $(\mathcal{A}+\mathcal{E}_2)*_2 (\mathcal{A} +\mathcal{E}_2)^{(1)}*_2(\mathcal{A}+\mathcal{E}_2) = \mathcal{A}+\mathcal{E}_2$.
\end{example}
Next, Theorem \ref{APlusB1} gives the perturbation result for the inner inverse.
\begin{theorem}\label{APlusB1}
Let $\mathcal{A}, \mathcal{E}\in\mathbb{C}^{S(s)\times T(t)}$, $\mathcal{A}^{(1)}\in\mathbb{C}^{T(t)\times S(s) }$ be the inner inverse of $\mathcal{A}$, and consider $ \mathcal{D} = \mathcal{A}+ \mathcal{E}$.
If  \(\|\mathcal{A}^{(1)}\|\|\mathcal{E}\| < 1\) and $\rra{\mathcal{A}}= \rra{\mathcal{D}}$, then
$$  \mathcal{D}^{(1)} - \mathcal{A}^{(1)} =\mathcal{A}^{(1)}*_s (\bigrho- \mathcal{I}),$$
  $$ \frac{\Vert\mathcal{D}^{(1)} - \mathcal{A}^{(1)}\Vert}{\Vert\mathcal{A}^{(1)}\Vert} \leq \frac{2 - \Vert\mathcal{E} *_t \mathcal{A}^{(1)}\Vert}{1 - \Vert\mathcal{E} *_t \mathcal{A}^{(1)} \Vert}.$$

\end{theorem}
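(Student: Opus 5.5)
We identify $\mathcal{D}^{(1)}$ with the specific inner inverse $\mathcal{H}=\mathcal{A}^{(1)}*_s\bigrho$, and then bound $\mathcal{H}-\mathcal{A}^{(1)}$ directly.

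First, the hypotheses $\|\mathcal{A}^{(1)}\|\|\mathcal{E}\|<1$ and $\rra{\mathcal{A}}=\rra{\mathcal{D}}$ are exactly those of Theorem \ref{Chai1Inverse1Trank}. By that theorem, $\mathcal{H}=\mathcal{A}^{(1)}*_s\bigrho$ is an inner inverse of $\mathcal{D}$, and we set $\mathcal{D}^{(1)}:=\mathcal{H}$. We also need $\bigrho$ to exist. Submultiplicativity of the consistent norm gives
$$\|\mathcal{E}*_t\mathcal{A}^{(1)}\|\le\|\mathcal{E}\|\|\mathcal{A}^{(1)}\|<1,$$
so Lemma \ref{le-inv} applied with $\mathcal{F}=\mathcal{E}*_t\mathcal{A}^{(1)}$ shows that $\bigrho=(\mathcal{I}+\mathcal{F})^{-1}$ exists and $\|\bigrho\|\le 1/(1-\|\mathcal{F}\|)$. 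The identity $\mathcal{D}^{(1)}-\mathcal{A}^{(1)}=\mathcal{A}^{(1)}*_s\bigrho-\mathcal{A}^{(1)}*_s\mathcal{I}=\mathcal{A}^{(1)}*_s(\bigrho-\mathcal{I})$ is then immediate.

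For the bound, write $\|\mathcal{D}^{(1)}-\mathcal{A}^{(1)}\|\le\|\mathcal{A}^{(1)}\|\,\|\bigrho-\mathcal{I}\|$ and estimate $\|\bigrho-\mathcal{I}\|$ by the triangle inequality:
$$\|\bigrho-\mathcal{I}\|\le\|\bigrho\|+\|\mathcal{I}\|\le\frac{1}{1-\|\mathcal{F}\|}+1=\frac{2-\|\mathcal{F}\|}{1-\|\mathcal{F}\|}.$$
This is exactly the stated right-hand side, provided $\|\mathcal{I}\|=1$ as for the spectral norm. Dividing by $\|\mathcal{A}^{(1)}\|$, which is nonzero unless the trivial case $\mathcal{A}=\mathcal{O}$ occurs, gives the claim.

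The sharper estimate $\bigrho-\mathcal{I}=-\bigrho*_s\mathcal{F}$, giving $\|\bigrho-\mathcal{I}\|\le\|\mathcal{F}\|/(1-\|\mathcal{F}\|)$, is also available and implies the stated one. I would note it but present the cruder version, since it matches the form of the statement.

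The main obstacle is not computational but interpretive. The term $\|\mathcal{I}\|$ equals $1$ only for norms such as the spectral norm; for the Frobenius norm it is $\sqrt{\mathfrak{S}}$. If the triangle-inequality route is meant to cover all consistent norms, I would switch to the sharper estimate above. That route uses only submultiplicativity and Lemma \ref{le-inv}, and avoids $\|\mathcal{I}\|$ entirely, since $\|\mathcal{F}\|/(1-\|\mathcal{F}\|)\le(2-\|\mathcal{F}\|)/(1-\|\mathcal{F}\|)$. One caveat remains: Lemma \ref{le-inv} as used for the Frobenius norm itself implicitly relies on a normalized identity, so I would accept it as stated in the paper.
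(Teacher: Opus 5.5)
Your proof is correct and follows the paper's argument: Theorem~\ref{Chai1Inverse1Trank} to take $\mathcal{D}^{(1)}=\mathcal{A}^{(1)}*_s\bigrho$, Lemma~\ref{le-inv} for $\bigrho$, then the triangle inequality. The one difference matters for the Frobenius norm. The paper applies the triangle inequality before factoring, $\|\mathcal{A}^{(1)}*_s\bigrho-\mathcal{A}^{(1)}\|\le\|\mathcal{A}^{(1)}\|\,\|\bigrho\|+\|\mathcal{A}^{(1)}\|$, so $\|\mathcal{I}\|$ never enters and the stated ``$+1$'' form holds for every consistent norm. Your factor-first variant genuinely needs $\|\mathcal{I}\|=1$ and does not give the stated bound for the Frobenius norm used in all the examples. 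If you factor first, keep your sharper estimate via $\bigrho-\mathcal{I}=-\bigrho*_s\mathcal{F}$ instead.

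Your caveat about Lemma~\ref{le-inv} is well founded: for $\mathcal{F}=\mathcal{O}$ one gets $\|\bigrho\|_F=\|\mathcal{I}\|_F=\sqrt{\mathfrak{S}}>1$ once $\mathfrak{S}>1$. It affects the paper's intermediate bound on $\|\mathcal{D}^{(1)}\|$ too. Both final bounds survive if you estimate the Neumann series termwise, $\|\mathcal{A}^{(1)}*_s\bigrho\|\le\|\mathcal{A}^{(1)}\|\sum_{k\ge0}\|\mathcal{F}\|^k$ and $\|\bigrho*_s\mathcal{F}\|\le\sum_{k\ge1}\|\mathcal{F}\|^k$, which uses only submultiplicativity.
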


\begin{proof}
Utilizing the given condition \( \|\mathcal{A}^{(1)}\| \|\mathcal{E}\| < 1 \), it can be obtained
\[
\|\mathcal{E} *_t \mathcal{A}^{(1)}\| \leq \|\mathcal{A}^{(1)}\| \|\mathcal{E}\| < 1.
\]
By Corollary \ref{Chai1Inverse1Trank}, the inner inverse of \( \mathcal{D} = \mathcal{A} + \mathcal{E} \) is
\[
\mathcal{D}^{(1)} = \mathcal{A}^{(1)} *_s \bigrho,
\]
which implies
\[
   \mathcal{D}^{(1)} - \mathcal{A}^{(1)} =\mathcal{A}^{(1)}*_s (\bigrho- 1).
\]
From \eqref{0.4}, we estimate
\[
\|\bigrho\| \leq \frac{1}{1 - \|\mathcal{E} *_t \mathcal{A}^{(1)}\|},
\]
which initiates
\[
\|\mathcal{D}^{(1)}\| \leq \|\mathcal{A}^{(1)}\| \cdot \frac{1}{1 - \|\mathcal{E} *_t \mathcal{A}^{(1)}\|}.
\]
The application of the triangle inequality results in
\[
\|\mathcal{D}^{(1)} - \mathcal{A}^{(1)}\| \leq \|\mathcal{D}^{(1)}\| + \|\mathcal{A}^{(1)}\| \leq \|\mathcal{A}^{(1)}\| \left( \frac{1}{1 - \|\mathcal{E} *_t \mathcal{A}^{(1)}\|} + 1 \right).
\]
By simplifying the expression on the right-hand side, we obtain the following result:
\[
\frac{\|\mathcal{D}^{(1)} - \mathcal{A}^{(1)}\|}{\|\mathcal{A}^{(1)}\|} \leq \frac{1}{1 - \|\mathcal{E} *_t \mathcal{A}^{(1)}\|} + 1 = \frac{2 - \|\mathcal{E} *_t \mathcal{A}^{(1)}\|}{1 - \|\mathcal{E} *_t \mathcal{A}^{(1)}\|},
\]
which is just desired inequality.
\end{proof}

\begin{example}
Denote $\mathcal{A}$ and $\mathcal{E}_2$ from Example \ref{expAplusE1inverse},  then $\mathcal{D} = \mathcal{A}+\mathcal{E}_2$ and $\mathcal{D}^{(1)}$ is the inner inverse of $\mathcal{D}$.  $\mathcal{D}^{(1)}=(\mathcal{A} +\mathcal{E}_2)^{(1)}$ and $\mathcal{A}^{(1)}$ have been calculated in the Example \ref{expAplusE1inverse}.
Thus,
   $$  \|\mathcal{D}^{(1)} - \mathcal{A}^{(1)}\|_F =0.1764,\ \ \ \Vert\mathcal{E}_2 *_{2} \mathcal{A}^{(1)}\Vert_F=0.2546,$$
which is transformed into
$$\frac{\|\mathcal{D}^{(1)} - \mathcal{A}^{(1)}\|_F}{\|\mathcal{A}^{(1)}\|_F}=0.2000,\ \ \ \frac{2 - \Vert\mathcal{E}_2 *_{2} \mathcal{A}^{(1)}\Vert_F}{1 - \Vert\mathcal{E}_2 *_{2} \mathcal{A}^{(1)} \Vert_F}=2.3415.$$
Therefore, the inequality of Theorem \ref{APlusB1} holds.
\end{example}

\begin{theorem}
Assume that \( \mathcal{A} \in \mathbb{C}^{S(s) \times T(t)} \) with inner inverse $\mathcal{A}^{(1)}$ and $\mathcal{D}=\mathcal{A}+\mathcal{E}$. If $\Vert \mathcal{E}*_t\mathcal{A}^{(1)}\Vert<1 $ and $\mathcal{E}=\mathcal{E}*_t\mathcal{A}^{(1)}*_s\mathcal{A}$ (or $\mathcal{E}=\mathcal{A} *_t\mathcal{A}^{(1)}*_s\mathcal{E}$), then
$$\mathcal{D}^{(1)}=\mathcal{A}^{(1)}*_s\bigrho    =\bigdelta*_t\mathcal{A}^{(1)}.$$
\end{theorem}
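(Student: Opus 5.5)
The plan is to verify directly that $\mathcal{H}=\mathcal{A}^{(1)}*_s\bigrho$ satisfies $\mathcal{D}*_t\mathcal{H}*_s\mathcal{D}=\mathcal{D}$. First, though, we must make sense of both factors and show they give the same tensor. Since $\|\mathcal{E}*_t\mathcal{A}^{(1)}\|<1$, Lemma \ref{le-inv} shows that $\bigrho=(\mathcal{I}+\mathcal{E}*_t\mathcal{A}^{(1)})^{-1}$ exists. The spectra of $\mathcal{E}*_t\mathcal{A}^{(1)}$ and $\mathcal{A}^{(1)}*_s\mathcal{E}$ agree away from zero, as they do for the reshaped matrices under rsh. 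Hence $\mathcal{I}+\mathcal{A}^{(1)}*_s\mathcal{E}$ is invertible as well. We then use the push-through identity
\[
\mathcal{A}^{(1)}*_s(\mathcal{I}+\mathcal{E}*_t\mathcal{A}^{(1)})=(\mathcal{I}+\mathcal{A}^{(1)}*_s\mathcal{E})*_t\mathcal{A}^{(1)}.
\]
Multiplying by $\bigdelta$ on the left and by $\bigrho$ on the right gives $\mathcal{A}^{(1)}*_s\bigrho=\bigdelta*_t\mathcal{A}^{(1)}$.

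Next, treat the case $\mathcal{E}=\mathcal{E}*_t\mathcal{A}^{(1)}*_s\mathcal{A}$. The key observation is that $\mathcal{D}$ then factors:
\[
\mathcal{D}=\mathcal{A}+\mathcal{E}*_t\mathcal{A}^{(1)}*_s\mathcal{A}=(\mathcal{I}+\mathcal{E}*_t\mathcal{A}^{(1)})*_s\mathcal{A}=\bigrho^{-1}*_s\mathcal{A}.
\]
Substituting this factorization gives
\[
\mathcal{D}*_t\mathcal{A}^{(1)}*_s\bigrho*_s\mathcal{D}=\bigrho^{-1}*_s\mathcal{A}*_t\mathcal{A}^{(1)}*_s\bigrho*_s\bigrho^{-1}*_s\mathcal{A}=\bigrho^{-1}*_s\mathcal{A}*_t\mathcal{A}^{(1)}*_s\mathcal{A}=\bigrho^{-1}*_s\mathcal{A}=\mathcal{D}.
\]
So $\mathcal{H}$ is an inner inverse of $\mathcal{D}$.

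The case $\mathcal{E}=\mathcal{A}*_t\mathcal{A}^{(1)}*_s\mathcal{E}$ is symmetric. Here
\[
\mathcal{D}=\mathcal{A}*_t(\mathcal{I}+\mathcal{A}^{(1)}*_s\mathcal{E})=\mathcal{A}*_t\bigdelta^{-1}.
\]
Using the form $\mathcal{H}=\bigdelta*_t\mathcal{A}^{(1)}$, the same cancellation yields $\mathcal{D}*_t\mathcal{H}*_s\mathcal{D}=\mathcal{A}*_t\mathcal{A}^{(1)}*_s\mathcal{A}*_t\bigdelta^{-1}=\mathcal{D}$.

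The main obstacle is the invertibility of $\bigdelta$. The hypothesis only bounds $\|\mathcal{E}*_t\mathcal{A}^{(1)}\|$, not $\|\mathcal{A}^{(1)}*_s\mathcal{E}\|$, so Lemma \ref{le-inv} does not apply to it directly. I would argue through the reshaped matrices, using that $-1$ is an eigenvalue of $\mathcal{A}^{(1)}*_s\mathcal{E}$ if and only if it is one of $\mathcal{E}*_t\mathcal{A}^{(1)}$. The latter is excluded because the spectral radius of $\mathcal{E}*_t\mathcal{A}^{(1)}$ is at most its norm, which is less than $1$. Alternatively, an explicit inverse is available:
\[
(\mathcal{I}+\mathcal{A}^{(1)}*_s\mathcal{E})^{-1}=\mathcal{I}-\mathcal{A}^{(1)}*_s\bigrho*_s\mathcal{E},
\]
and it can be checked by direct multiplication using the push-through identity.
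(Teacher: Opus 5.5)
Your proof is correct and follows the paper's argument: the factorization $\mathcal{D}=(\mathcal{I}+\mathcal{E}*_t\mathcal{A}^{(1)})*_s\mathcal{A}$ gives $\mathcal{D}*_t\mathcal{A}^{(1)}*_s\bigrho*_s\mathcal{D}=\mathcal{D}$ directly, and the push-through identity yields $\mathcal{A}^{(1)}*_s\bigrho=\bigdelta*_t\mathcal{A}^{(1)}$. You are more careful than the paper, which merely asserts that $\bigdelta$ exists because $\Vert\mathcal{E}*_t\mathcal{A}^{(1)}\Vert<1$ and never writes out the case $\mathcal{E}=\mathcal{A}*_t\mathcal{A}^{(1)}*_s\mathcal{E}$; your explicit inverse $\mathcal{I}-\mathcal{A}^{(1)}*_s\bigrho*_s\mathcal{E}$ and the factorization $\mathcal{D}=\mathcal{A}*_t(\mathcal{I}+\mathcal{A}^{(1)}*_s\mathcal{E})$ fill both of these in.
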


\begin{proof} Based on the assumption $\Vert \mathcal{E}*_t\mathcal{A}^{(1)}\Vert<1$, we conclude the existence of $\bigrho$ and $\bigdelta$.
From $\mathcal{D}=(\mathcal{I}+\mathcal{E} *_t\mathcal{A}^{(1)})*_s\mathcal{A}$, we derive
$\mathcal{A}^{(1)}*_s \bigrho*_s\mathcal{D}=\mathcal{A}^{(1)}*_s\mathcal{A}$ and
\begin{equation*}
\aligned
\mathcal{D} *_t \mathcal{A}^{(1)}*_s\bigrho *_s \mathcal{D} &=\mathcal{D} *_t \mathcal{A}^{(1)}*_s\mathcal{A} \\
&=\mathcal{A} *_t \mathcal{A}^{(1)}*_s\mathcal{A} +\mathcal{E} *_t \mathcal{A}^{(1)}*_s\mathcal{A}\\
&=\mathcal{D}.
\endaligned
\end{equation*}
In this way, $\mathcal{A}^{(1)}*_s\bigrho$ is an inner inverse of $\mathcal{D}$.
Now, using
\begin{equation*}
\aligned
(\mathcal{I}+\mathcal{A}^{(1)} *_s\mathcal{E}) *_t \mathcal{A}^{(1)} &=\mathcal{A}^{(1)}+\mathcal{A}^{(1)}*_s \mathcal{E} *_t\mathcal{A}^{(1)}\\
&=\mathcal{A}^{(1)} *_s (\mathcal{I}+\mathcal{E} *_t \mathcal{A}^{(1)}),
\endaligned
\end{equation*}
the equality $\mathcal{A}^{(1)} *_s\bigrho   =\bigdelta *_t\mathcal{A}^{(1)}$ is concluded.
\end{proof}

\section{Perturbation of outer and $(\mathcal{B},\mathcal{C})$-inverses}

Next, we will express perturbations for the $(\mathcal{B})$-inverse, $(\mathcal{C})$-inverse and the $(\mathcal{B},\mathcal{C})$-inverse.

We begin with perturbation formulae for tensor outer inverses.
Theorem \ref{0.3B} addresses {\bf Problem 2.}
This problem involves a perturbation $\mathcal{E}$ in $\mathcal{A}$ that is small enough such that $\|\mathcal{E}*_t\mathcal{A}^{(2)}\|< 1$ holds true.
The research question focuses on understanding how $\mathcal{E}$ affects the computation of $\mathcal{A}^{(2)}$ under specific conditions.
More specifically, the research task is to explore relations between $ (\mathcal{A} + \mathcal{E})^{(2)}$ and $\mathcal{A}^{(2)}$, defined by the multiplicative factors
$\bigDdelta   =(\mathcal{I} + \mathcal{A}^{(2)}*_s\mathcal{E})^{-1}$ and $\bigDrho  =(\mathcal{I} + \mathcal{E}*_t \mathcal{A}^{(2)})^{-1}$.
Essentially, Theorem \ref{0.3B} investigates the relationship of the form $ (\mathcal{A} + \mathcal{E})^{(2)}=\mathcal{A}^{(2)} *_s\bigDrho =\bigDdelta   *_t\mathcal{A}^{(2)}$ under certain specified conditions.
Subsequent results in Theorem \ref{0.3B} will further explore the issue concerning outer inverses with specified range or/and null space.

\begin{theorem}\label{0.3B}
    Let \( \mathcal{A} \in \mathbb{C}^{S(s) \times T(t)} \)  such that $\mathcal{A}^{(2)}\in \mathbb{C}^{S(s) \times T(t)}$ is an outer inverse of $\mathcal{A}$.
If $\mathcal{D}=\mathcal{A}+\mathcal{E}$, $\Vert \mathcal{E}*_t\mathcal{A}^{(2)}\Vert<1 $ and $\mathcal{E}=\mathcal{E}*_t\mathcal{A}^{(2)}*_s\mathcal{A}$ (or $\mathcal{E}=\mathcal{A} *_t\mathcal{A}^{(2)}*_s\mathcal{E}$), then
    $$\mathcal{D}^{(2)}=\mathcal{A}^{(2)}*_s\bigDrho     =\bigDdelta  *_t\mathcal{A}^{(2)};$$
    $$\mathcal{D}^{(2)} - \mathcal{A}^{(2)} =\mathcal{A}^{(2)}*_s (\bigDrho - \mathcal{I});$$
    $$\frac{\Vert\mathcal{D}^{(2)} - \mathcal{A}^{(2)}\Vert}{\Vert\mathcal{A}^{(2)}\Vert} \leq \frac{2 - \Vert\mathcal{E} *_t \mathcal{A}^{(2)}\Vert}{1 - \Vert\mathcal{E} *_t \mathcal{A}^{(2)} \Vert}.$$
\end{theorem}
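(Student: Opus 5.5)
We mirror the proof of the preceding theorem on inner inverses, replacing the inner-inverse identity by the outer-inverse identity $\mathcal{A}^{(2)}*_s\mathcal{A}*_t\mathcal{A}^{(2)}=\mathcal{A}^{(2)}$. First, since $\Vert \mathcal{E}*_t\mathcal{A}^{(2)}\Vert<1$, Lemma \ref{le-inv} gives that $\mathcal{I}+\mathcal{E}*_t\mathcal{A}^{(2)}$ is invertible, so $\bigDrho$ exists. The elementary identity
$$(\mathcal{I}+\mathcal{A}^{(2)}*_s\mathcal{E})*_t\mathcal{A}^{(2)}=\mathcal{A}^{(2)}+\mathcal{A}^{(2)}*_s\mathcal{E}*_t\mathcal{A}^{(2)}=\mathcal{A}^{(2)}*_s(\mathcal{I}+\mathcal{E}*_t\mathcal{A}^{(2)})$$
then yields $\mathcal{A}^{(2)}*_s\bigDrho=\bigDdelta*_t\mathcal{A}^{(2)}$. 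For this we also need invertibility of $\mathcal{I}+\mathcal{A}^{(2)}*_s\mathcal{E}$, which we obtain from the standard fact that $\mathcal{I}+\mathcal{P}*\mathcal{Q}$ and $\mathcal{I}+\mathcal{Q}*\mathcal{P}$ are simultaneously invertible, with inverse $\mathcal{I}-\mathcal{Q}*(\mathcal{I}+\mathcal{P}*\mathcal{Q})^{-1}*\mathcal{P}$.

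Next, let $\mathcal{H}=\mathcal{A}^{(2)}*_s\bigDrho$ and verify $\mathcal{H}*_s\mathcal{D}*_t\mathcal{H}=\mathcal{H}$. In the case $\mathcal{E}=\mathcal{E}*_t\mathcal{A}^{(2)}*_s\mathcal{A}$ we do not factor $\mathcal{D}$; the $(2)$ equation only needs the computation
$$\mathcal{D}*_t\mathcal{A}^{(2)}=\mathcal{A}*_t\mathcal{A}^{(2)}+\mathcal{E}*_t\mathcal{A}^{(2)}.$$
Then
$$\mathcal{H}*_s\mathcal{D}*_t\mathcal{H}=\mathcal{A}^{(2)}*_s\bigDrho*_s(\mathcal{A}*_t\mathcal{A}^{(2)}+\mathcal{E}*_t\mathcal{A}^{(2)})*_s\bigDrho.$$
We rewrite $\mathcal{A}^{(2)}*_s\bigDrho=\bigDdelta*_t\mathcal{A}^{(2)}$ and use
$$\mathcal{A}^{(2)}*_s(\mathcal{A}*_t\mathcal{A}^{(2)}+\mathcal{E}*_t\mathcal{A}^{(2)})=\mathcal{A}^{(2)}+\mathcal{A}^{(2)}*_s\mathcal{E}*_t\mathcal{A}^{(2)}=(\mathcal{I}+\mathcal{A}^{(2)}*_s\mathcal{E})*_t\mathcal{A}^{(2)}.$$
The product therefore collapses to $\bigDdelta*_t(\mathcal{I}+\mathcal{A}^{(2)}*_s\mathcal{E})*_t\mathcal{A}^{(2)}*_s\bigDrho=\mathcal{A}^{(2)}*_s\bigDrho=\mathcal{H}$. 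This computation uses only the outer-inverse property, so it works for either side condition. The hypothesis $\mathcal{E}=\mathcal{E}*_t\mathcal{A}^{(2)}*_s\mathcal{A}$ (respectively $\mathcal{E}=\mathcal{A}*_t\mathcal{A}^{(2)}*_s\mathcal{E}$) is what lets us write $\mathcal{D}=(\mathcal{I}+\mathcal{E}*_t\mathcal{A}^{(2)})*_s\mathcal{A}$ (respectively $\mathcal{A}*_t(\mathcal{I}+\mathcal{A}^{(2)}*_s\mathcal{E})$). It is needed to keep the range and null space structure, namely $\mathcal{H}*_s\mathcal{D}=\mathcal{A}^{(2)}*_s\mathcal{A}$ (respectively $\mathcal{D}*_t\mathcal{H}=\mathcal{A}*_t\mathcal{A}^{(2)}$), so that $\mathcal{H}$ is the natural outer inverse of $\mathcal{D}$ corresponding to $\mathcal{A}^{(2)}$. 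We record this as a remark, since the later $(\mathcal{B})$, $(\mathcal{C})$ results rely on it.

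The main obstacle is making sense of the claim ``$\mathcal{D}^{(2)}=\ldots$''. Outer inverses are not unique; $\mathcal{O}$ is always one. The statement therefore has to be read as: $\mathcal{H}$ is an outer inverse of $\mathcal{D}$, denoted $\mathcal{D}^{(2)}$, with $\aunclfamily{\bf R}(\mathcal{H})=\aunclfamily{\bf R}(\mathcal{A}^{(2)})$ and $\aunclfamily{\bf N}(\mathcal{H})=\aunclfamily{\bf N}(\mathcal{A}^{(2)})$. Both equalities follow immediately from invertibility of $\bigDrho$ and $\bigDdelta$. I will adopt this reading explicitly.

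The remaining two claims are routine. The difference formula $\mathcal{D}^{(2)}-\mathcal{A}^{(2)}=\mathcal{A}^{(2)}*_s(\bigDrho-\mathcal{I})$ is immediate. For the bound, \eqref{0.4} gives $\Vert\bigDrho\Vert\le 1/(1-\Vert\mathcal{E}*_t\mathcal{A}^{(2)}\Vert)$. Combining this with submultiplicativity and the triangle inequality, exactly as in Theorem \ref{APlusB1}, gives
$$\Vert\mathcal{D}^{(2)}-\mathcal{A}^{(2)}\Vert\le\Vert\mathcal{A}^{(2)}\Vert\Bigl(\tfrac{1}{1-\Vert\mathcal{E}*_t\mathcal{A}^{(2)}\Vert}+1\Bigr),$$
which simplifies to the stated estimate.
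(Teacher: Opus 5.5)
Your proof is correct, but its central step differs from the paper's. The paper uses the side condition $\mathcal{E}=\mathcal{E}*_t\mathcal{A}^{(2)}*_s\mathcal{A}$ to factor $\mathcal{D}=(\mathcal{I}+\mathcal{E}*_t\mathcal{A}^{(2)})*_s\mathcal{A}$. From this it gets $\mathcal{H}*_s\mathcal{D}=\mathcal{A}^{(2)}*_s\mathcal{A}$, so $\mathcal{H}*_s\mathcal{D}*_t\mathcal{H}=\mathcal{H}$ reduces in one line to $\mathcal{A}^{(2)}*_s\mathcal{A}*_t\mathcal{A}^{(2)}=\mathcal{A}^{(2)}$. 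It writes out only this alternative, and it asserts without justification that $\mathcal{I}+\mathcal{A}^{(2)}*_s\mathcal{E}$ is invertible.

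You instead compute $\mathcal{A}^{(2)}*_s\mathcal{D}*_t\mathcal{A}^{(2)}=(\mathcal{I}+\mathcal{A}^{(2)}*_s\mathcal{E})*_t\mathcal{A}^{(2)}$ and combine it with the intertwining identity. You also justify invertibility of $\mathcal{I}+\mathcal{A}^{(2)}*_s\mathcal{E}$ via the $\mathcal{I}+\mathcal{P}*\mathcal{Q}$ versus $\mathcal{I}+\mathcal{Q}*\mathcal{P}$ lemma.

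Your route buys generality and symmetry. It shows that neither side condition is needed for $\mathcal{H}$ to be an outer inverse of $\mathcal{D}$ with the same range and null space as $\mathcal{A}^{(2)}$. Both alternatives in the hypothesis are therefore handled at once, and the unproved invertibility step is filled in.

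The paper's route buys the stronger structural identity $\mathcal{H}*_s\mathcal{D}=\mathcal{A}^{(2)}*_s\mathcal{A}$ (respectively $\mathcal{D}*_t\mathcal{H}=\mathcal{A}*_t\mathcal{A}^{(2)}$ under the other condition). That is, the associated idempotent is unchanged by the perturbation, and this is what the side condition genuinely adds.

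Your remark that the later $(\mathcal{B})$- and $(\mathcal{C})$-results rely on this factorization overstates its role. The paper's proof of Theorem \ref{0.4B} uses only that the range of $\mathcal{H}$ equals that of $\mathcal{A}^{(2)}$, which your argument already gives with no side condition. The same holds for the null space in the $(\mathcal{C})$ case. The difference formula and the norm estimate are the same as in the paper.
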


\begin{proof} The assumption $\Vert \mathcal{E}*_t\mathcal{A}^{(2)}\Vert<1 $ implies the existence of $\bigDrho $ and $\bigDdelta  $.
Now, the identity $\mathcal{D}=(\mathcal{I}+\mathcal{E} *_t\mathcal{A}^{(2)})*_s\mathcal{A}$ gives
$$\mathcal{A}^{(2)}*_s\bigDrho*_s\mathcal{D}=
\mathcal{A}^{(2)}*_s\mathcal{A}.$$
Hence, we obtain
\begin{equation*}
\aligned
\mathcal{A}^{(2)}*_s\bigDrho  *_s \mathcal{D} *_t \mathcal{A}^{(2)}*_s\bigDrho  &=\mathcal{A}^{(2)}*_s\mathcal{A}*_t \mathcal{A}^{(2)}*_s\bigDrho \\
&=\mathcal{A}^{(2)}*_s\bigDrho,
\endaligned
\end{equation*}
which yield $\mathcal{D}^{(2)}=\mathcal{A}^{(2)}*_s\bigDrho$.

Since \begin{eqnarray*}
(\mathcal{I}+\mathcal{A}^{(2)} *_s\mathcal{E}) *_t \mathcal{A}^{(2)}
&=&\mathcal{A}^{(2)}+\mathcal{A}^{(2)}*_s \mathcal{E} *_t\mathcal{A}^{(2)}\\&=&\mathcal{A}^{(2)} *_s
(\mathcal{I}+\mathcal{E} *_t \mathcal{A}^{(2)}),
\end{eqnarray*}
we deduce that
$\mathcal{A}^{(2)}*_s\bigDrho      =\bigDdelta  *_t\mathcal{A}^{(2)}$.

Then, it follows
  $$ \mathcal{D}^{(2)} - \mathcal{A}^{(2)} =\mathcal{A}^{(2)}*_s (\bigDrho - \mathcal{I}).$$
From \eqref{0.4}, we bound the norm
$$\Vert \bigDrho \Vert \leq \frac{1}{1 - \Vert\mathcal{E} *_t \mathcal{A}^{(2)}\Vert}.$$
Thus
$$\Vert\mathcal{D}^{(2)}\Vert \leq \Vert\mathcal{A}^{(2)}\Vert \cdot \frac{1}{1 - \Vert\mathcal{E} *_t \mathcal{A}^{(2)}\Vert}.$$
Applying the triangle inequality
$$\Vert\mathcal{D}^{(2)} - \mathcal{A}^{(2)}\Vert \leq \Vert\mathcal{D}^{(2)}\Vert + \Vert\mathcal{A}^{(2)}\Vert \leq \Vert\mathcal{A}^{(2)}\Vert \left( \frac{1}{1 - \Vert\mathcal{E} *_t \mathcal{A}^{(2)}\Vert} + 1 \right)
$$
and simplifying the expression
$$\frac{\Vert\mathcal{D}^{(2)} - \mathcal{A}^{(2)}\Vert}{\Vert\mathcal{A}^{(2)}\Vert} \leq \frac{1}{1 - \Vert\mathcal{E} *_t \mathcal{A}^{(2)}\Vert} + 1 = \frac{2 - \Vert\mathcal{E} *_t \mathcal{A}^{(2)}\Vert}{1 - \Vert\mathcal{E} *_t \mathcal{A}^{(2)}\Vert}$$
we finalize the proof.
\end{proof}

Perturbation expressions for the $(\mathcal{B})$-inverse follow by the results of Theorem \ref{0.3B}.
We will use the notations
$\bigBrho=(\mathcal{I}+\mathcal{E} *_t\mathcal{A}_{\aunclfamily{\bf R}(\mathcal{B}),*}^{(2)})^{-1}$ and $\bigBdelta  =(\mathcal{I}+\mathcal{A}_{\aunclfamily{\bf R}(\mathcal{B}),*}^{(2)} *_s\mathcal{E})^{-1}$.

\begin{theorem}\label{0.4B}
    Let \( \mathcal{A} \in \mathbb{C}^{S(s) \times T(t)} \) and the $(\mathcal{B})$-inverse $\mathcal{A}_{\aunclfamily{\bf R}(\mathcal{B}),*}^{(2)}\in \mathbb{C}^{S(s) \times T(t)}$  exist.
If $\mathcal{D}=\mathcal{A}+\mathcal{E}$,  $\Vert \mathcal{E}*_t\mathcal{A}_{\aunclfamily{\bf R}(\mathcal{B}), *}^{(2)}\Vert<1 $ and $\mathcal{E}=\mathcal{E}*_t\mathcal{A}_{\aunclfamily{\bf R}(\mathcal{B}), *}^{(2)}*_s\mathcal{A}$,
then
$$\mathcal{D}_{\aunclfamily{\bf R}(\mathcal{B}), *}^{(2)}=\mathcal{A}_{\aunclfamily{\bf R}(\mathcal{B}), *}^{(2)}*_s \bigBrho    =\bigBdelta  *_t\mathcal{A}_{\aunclfamily{\bf R}(\mathcal{B}), *}^{(2)},$$
 %$$\mathcal{D}_{\aunclfamily{\bf R}(\mathcal{B}), *}^{(2)} - \mathcal{A}_{\aunclfamily{\bf R}(\mathcal{B}), *}^{(2)} =\mathcal{A}_{\aunclfamily{\bf R}(\mathcal{B}), *}^{(2)}*_s (\bigBrho - \mathcal{I} ).$$
 $$\frac{\Vert\mathcal{D}_{\aunclfamily{\bf R}(\mathcal{B}), *}^{(2)} - \mathcal{A}_{\aunclfamily{\bf R}(\mathcal{B}), *}^{(2)}\Vert}{\Vert\mathcal{A}_{\aunclfamily{\bf R}(\mathcal{B}), *}^{(2)}\Vert} \leq \frac{2 - \Vert\mathcal{E} *_t \mathcal{A}_{\aunclfamily{\bf R}(\mathcal{B}), *}^{(2)}\Vert}{1 - \Vert\mathcal{E} *_t \mathcal{A}_{\aunclfamily{\bf R}(\mathcal{B}), *}^{(2)} \Vert}.$$
\end{theorem}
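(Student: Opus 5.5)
Write $\mathcal{X}=\mathcal{A}_{\aunclfamily{\bf R}(\mathcal{B}),*}^{(2)}$, so that $\bigBrho=(\mathcal{I}+\mathcal{E}*_t\mathcal{X})^{-1}$ and $\bigBdelta=(\mathcal{I}+\mathcal{X}*_s\mathcal{E})^{-1}$. The plan is to specialize Theorem \ref{0.3B} to $\mathcal{A}^{(2)}=\mathcal{X}$ and then check that the perturbed outer inverse keeps the prescribed range $\aunclfamily{\bf R}(\mathcal{B})$. Lemma \ref{RBinverse} lets us work either with the range description or with the $(\mathcal{B})$-inverse equations (1TB), (2TB). The hypotheses $\Vert\mathcal{E}*_t\mathcal{X}\Vert<1$ and $\mathcal{E}=\mathcal{E}*_t\mathcal{X}*_s\mathcal{A}$ are exactly those of Theorem \ref{0.3B}. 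That theorem therefore gives that $\mathcal{Y}:=\mathcal{X}*_s\bigBrho=\bigBdelta*_t\mathcal{X}$ is an outer inverse of $\mathcal{D}$. It also gives the identity $\mathcal{Y}-\mathcal{X}=\mathcal{X}*_s(\bigBrho-\mathcal{I})$ and the relative bound
\[
\frac{\Vert\mathcal{Y}-\mathcal{X}\Vert}{\Vert\mathcal{X}\Vert}\leq\frac{2-\Vert\mathcal{E}*_t\mathcal{X}\Vert}{1-\Vert\mathcal{E}*_t\mathcal{X}\Vert}.
\]
The norm estimate is then immediate, via the same triangle-inequality argument combined with Lemma \ref{le-inv}.

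The remaining step is to show $\aunclfamily{\bf R}(\mathcal{Y})=\aunclfamily{\bf R}(\mathcal{B})$, so that $\mathcal{Y}$ is in fact $\mathcal{D}_{\aunclfamily{\bf R}(\mathcal{B}),*}^{(2)}$. Since $\bigBdelta$ is invertible, one might hope to use $\mathcal{Y}=\bigBdelta*_t\mathcal{X}$, but that only shows $\aunclfamily{\bf R}(\mathcal{Y})=\bigBdelta*_t\aunclfamily{\bf R}(\mathcal{X})$, which is not obviously the same space. Using the other factorization instead, $\mathcal{Y}=\mathcal{X}*_s\bigBrho$ with $\bigBrho$ invertible on $\mathbb{C}^{S(s)}$, gives $\aunclfamily{\bf R}(\mathcal{Y})=\aunclfamily{\bf R}(\mathcal{X})=\aunclfamily{\bf R}(\mathcal{B})$ directly.

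To be safe, I would also verify the $(\mathcal{B})$-inverse equations explicitly. For (2TB): since $\mathcal{X}\in\mathcal{B}*_u\mathbb{C}^{U(u)\times T(t)}*_t\mathcal{X}$, write $\mathcal{X}=\mathcal{B}*_u\mathcal{W}*_t\mathcal{X}$. Then
\[
\mathcal{Y}=\mathcal{X}*_s\bigBrho=\mathcal{B}*_u\mathcal{W}*_t\mathcal{X}*_s\bigBrho=\mathcal{B}*_u\mathcal{W}*_t\mathcal{Y}.
\]
For (1TB), use the identity $\mathcal{Y}*_s\mathcal{D}=\mathcal{X}*_s\mathcal{A}$ obtained in the proof of Theorem \ref{0.3B}. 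Combining it with $\mathcal{X}*_s\mathcal{A}*_t\mathcal{B}=\mathcal{B}$ gives
\[
\mathcal{Y}*_s\mathcal{D}*_t\mathcal{B}=\mathcal{X}*_s\mathcal{A}*_t\mathcal{B}=\mathcal{B}.
\]
By Lemma \ref{RBinverse}, $\mathcal{Y}$ is then the $(\mathcal{B})$-inverse of $\mathcal{D}$.

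The main obstacle is this range-preservation step, since Theorem \ref{0.3B} alone only yields some outer inverse of $\mathcal{D}$. The key is to recognize that $\mathcal{Y}*_s\mathcal{D}=\mathcal{X}*_s\mathcal{A}$, which follows from $\mathcal{D}=(\mathcal{I}+\mathcal{E}*_t\mathcal{X})*_s\mathcal{A}$, transports (1TB) from $\mathcal{A}$ to $\mathcal{D}$ without extra hypotheses. Everything else is substitution into earlier results.
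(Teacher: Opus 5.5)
Your proof is correct and follows the paper's argument. You apply Theorem \ref{0.3B} with $\mathcal{A}^{(2)}=\mathcal{A}_{\aunclfamily{\bf R}(\mathcal{B}),*}^{(2)}$ to get an outer inverse of $\mathcal{D}$ together with the norm bound, and then note that right multiplication by the invertible $\bigBrho$ keeps the range equal to $\aunclfamily{\bf R}(\mathcal{B})$. Your extra explicit check of (1TB) and (2TB), via $\mathcal{Y}*_s\mathcal{D}=\mathcal{X}*_s\mathcal{A}$, is valid but redundant once the range argument is in place.
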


\begin{proof} Based on Theorem \ref{0.3B},
$\mathcal{A}_{\aunclfamily{\bf R}(\mathcal{B}), *}^{(2)}*_s\bigBrho     =\bigBdelta   *_t\mathcal{A}_{\aunclfamily{\bf R}(\mathcal{B}), *}^{(2)}$ represents an outer inverse of $\mathcal{D}$.
Because $\aunclfamily{\bf R}(\mathcal{A}_{\aunclfamily{\bf R}(\mathcal{B}), *}^{(2)}*_s\bigBrho )=\aunclfamily{\bf R}(\mathcal{A}_{\aunclfamily{\bf R}(\mathcal{B}), *}^{(2)})=\aunclfamily{\bf R}(\mathcal{B})$, we conclude
$\mathcal{D}_{\aunclfamily{\bf R}(\mathcal{B}), *}^{(2)}=\mathcal{A}_{\aunclfamily{\bf R}(\mathcal{B}), *}^{(2)}*_s\bigBrho =\bigBdelta  *_t\mathcal{A}_{\aunclfamily{\bf R}(\mathcal{B}), *}^{(2)}$.
The remaining steps are simple and can be completed by applying Theorem \ref{0.3B}.
\end{proof}

Similarly, Theorem \ref{0.4C} gives corresponding results for outer inverses with specified null space.
We will use the notations
$\bigCrho =(\mathcal{I}+\mathcal{E} *_t\mathcal{A}_{*,\aunclfamily{\bf N}(\mathcal{C})}^{(2)})^{-1}$ and $\bigCdelta  =(\mathcal{I}+\mathcal{A}_{*,\aunclfamily{\bf N}(\mathcal{C})}^{(2)} *_s\mathcal{E})^{-1}$.

\begin{theorem}\label{0.4C}
    Let \( \mathcal{A} \in \mathbb{C}^{S(s) \times T(t)} \) and the $(\mathcal{C})$-inverse $\mathcal{A}_{*, \aunclfamily{\bf N}(\mathcal{C})}^{(2)}\in \mathbb{C}^{S(s) \times T(t)}$  exist. If $\mathcal{D}=\mathcal{A}+\mathcal{E}$, $\Vert \mathcal{E}*_t\mathcal{A}_{*, \aunclfamily{\bf N}(\mathcal{C})}^{(2)}\Vert<1 $ and $\mathcal{E}=\mathcal{A}*_t\mathcal{A}_{*, \aunclfamily{\bf N}(\mathcal{C})}^{(2)}*_s\mathcal{E}$, then
    $$\mathcal{D}_{*, \aunclfamily{\bf N}(\mathcal{C})}^{(2)}=\mathcal{A}_{*, \aunclfamily{\bf N}(\mathcal{C})}^{(2)}*_s\bigCrho =\bigCdelta  *_t\mathcal{A}_{*, \aunclfamily{\bf N}(\mathcal{C})}^{(2)},$$
    %$$\mathcal{D}_{*, \aunclfamily{\bf N}(\mathcal{C})}^{(2)} - \mathcal{A}_{*, \aunclfamily{\bf N}(\mathcal{C})}^{(2)} =\mathcal{A}_{*, \aunclfamily{\bf N}(\mathcal{C})}^{(2)}*_s (\bigCrho - \mathcal{I}).$$
    $$\frac{\Vert\mathcal{D}_{*, \aunclfamily{\bf N}(\mathcal{C})}^{(2)} - \mathcal{A}_{*, \aunclfamily{\bf N}(\mathcal{C})}^{(2)}\Vert}{\Vert\mathcal{A}_{*, \aunclfamily{\bf N}(\mathcal{C})}^{(2)}\Vert} \leq \frac{2 - \Vert\mathcal{E} *_t \mathcal{A}_{*, \aunclfamily{\bf N}(\mathcal{C})}^{(2)}\Vert}{1 - \Vert\mathcal{E} *_t \mathcal{A}_{*, \aunclfamily{\bf N}(\mathcal{C})}^{(2)} \Vert}.$$
\end{theorem}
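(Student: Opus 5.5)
Write $\mathcal{X}=\mathcal{A}_{*,\aunclfamily{\bf N}(\mathcal{C})}^{(2)}$. The plan mirrors the proof of Theorem \ref{0.4B}, with range replaced by null space. The one change is which side condition of Theorem \ref{0.3B} we invoke: here it is the second one, $\mathcal{E}=\mathcal{A}*_t\mathcal{X}*_s\mathcal{E}$. That choice forces us to factor $\mathcal{D}$ on the right rather than on the left.

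\textbf{Step 1 (outer inverse).} The hypothesis $\Vert\mathcal{E}*_t\mathcal{X}\Vert<1$ and Lemma \ref{le-inv} show that $\bigCrho$ exists. Since $\mathcal{X}*_s\mathcal{E}$ and $\mathcal{E}*_t\mathcal{X}$ share their nonzero spectrum, $\bigCdelta$ exists as well. The push-through identity $(\mathcal{I}+\mathcal{X}*_s\mathcal{E})*_t\mathcal{X}=\mathcal{X}*_s(\mathcal{I}+\mathcal{E}*_t\mathcal{X})$ then gives $\mathcal{X}*_s\bigCrho=\bigCdelta*_t\mathcal{X}$. Call this common tensor $\mathcal{Z}$.

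Using $\mathcal{E}=\mathcal{A}*_t\mathcal{X}*_s\mathcal{E}$ we factor
$$\mathcal{D}=\mathcal{A}*_t(\mathcal{I}+\mathcal{X}*_s\mathcal{E}).$$
Hence $\mathcal{D}*_t\mathcal{Z}=\mathcal{D}*_t\bigCdelta*_t\mathcal{X}=\mathcal{A}*_t\mathcal{X}$. It follows that
$$\mathcal{Z}*_s\mathcal{D}*_t\mathcal{Z}=\bigCdelta*_t\mathcal{X}*_s\mathcal{A}*_t\mathcal{X}=\bigCdelta*_t\mathcal{X}=\mathcal{Z},$$
so $\mathcal{Z}$ is an outer inverse of $\mathcal{D}$. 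This is exactly the second alternative in Theorem \ref{0.3B}, which we may cite directly.

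\textbf{Step 2 (prescribed null space).} Because $\bigCdelta$ is invertible, $\aunclfamily{\bf N}(\mathcal{Z})=\aunclfamily{\bf N}(\bigCdelta*_t\mathcal{X})=\aunclfamily{\bf N}(\mathcal{X})=\aunclfamily{\bf N}(\mathcal{C})$. So $\mathcal{Z}$ is an outer inverse of $\mathcal{D}$ with null space $\aunclfamily{\bf N}(\mathcal{C})$, that is, $\mathcal{Z}=\mathcal{D}_{*,\aunclfamily{\bf N}(\mathcal{C})}^{(2)}$. By Lemma \ref{RBinverse} this is the same as saying $\mathcal{Z}$ is a $(\mathcal{C})$-inverse of $\mathcal{D}$.

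We should use the $\bigCdelta$ form here and not the $\bigCrho$ form. The $\bigCrho$ form would naturally preserve the range, which is what Theorem \ref{0.4B} needed, but here we need the null space.

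\textbf{Step 3 (bound).} From $\mathcal{Z}-\mathcal{X}=\mathcal{X}*_s(\bigCrho-\mathcal{I})$, together with $\Vert\bigCrho\Vert\le 1/(1-\Vert\mathcal{E}*_t\mathcal{X}\Vert)$ from \eqref{0.4} and the triangle inequality, we get the stated estimate exactly as in Theorem \ref{0.3B}.

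\textbf{Expected obstacle.} The only delicate point is the identification in Step 2. An outer inverse with prescribed null space need not be unique, since the range is free. So the statement should be read as the paper does in Theorem \ref{0.4B}: the displayed tensor is an outer inverse of $\mathcal{D}$ with null space $\aunclfamily{\bf N}(\mathcal{C})$. If uniqueness is wanted, one can note that $\aunclfamily{\bf R}(\mathcal{Z})=\aunclfamily{\bf R}(\mathcal{X})$, which pins the tensor down as the outer inverse with that range and that null space.
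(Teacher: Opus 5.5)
Your proof is correct and follows the route the paper intends; the paper gives no separate proof, only a remark that the argument is similar to Theorem \ref{0.4B}. That route is: obtain $\mathcal{X}*_s\bigCrho=\bigCdelta*_t\mathcal{X}$ as an outer inverse of $\mathcal{D}$ via Theorem \ref{0.3B}, note that the invertible left factor $\bigCdelta$ leaves $\aunclfamily{\bf N}(\mathcal{X})=\aunclfamily{\bf N}(\mathcal{C})$ unchanged, and reuse the triangle-inequality bound. Your direct verification through $\mathcal{D}=\mathcal{A}*_t(\mathcal{I}+\mathcal{X}*_s\mathcal{E})$, together with your justification that $\bigCdelta$ exists, actually fills a gap: the paper's proof of Theorem \ref{0.3B} handles only the alternative $\mathcal{E}=\mathcal{E}*_t\mathcal{A}^{(2)}*_s\mathcal{A}$, whereas this theorem assumes the other one.
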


The rest of the paper investigates perturbation of the $(\mathcal{B},\mathcal{C})$-inverse under appropriate restrictions.
The practical notations
$\bigBCrho =(\mathcal{I}+\mathcal{E} *_t\mathcal{A}_{\aunclfamily{\bf R}(\mathcal{B}),\aunclfamily{\bf N}(\mathcal{C})}^{(2)})^{-1}$ and $\bigBCdelta  =(\mathcal{I}+\mathcal{A}_{\aunclfamily{\bf R}(\mathcal{B}),\aunclfamily{\bf N}(\mathcal{C})}^{(2)}*_s\mathcal{E} )^{-1}$ will be used with the aim to improve presentation.

\begin{theorem}\label{0.3BC}
   If \( \mathcal{A} \in \mathbb{C}^{S(s) \times T(t)} \), the $(\mathcal{B},\mathcal{C})$-inverse $\mathcal{A}_{\aunclfamily{\bf R}(\mathcal{B}), \aunclfamily{\bf N}(\mathcal{C})}^{(2)}\in \mathbb{C}^{S(s) \times T(t)}$  exists, $\mathcal{D}=\mathcal{A}+\mathcal{E}$, $\Vert \mathcal{E}*_t\mathcal{A}_{\aunclfamily{\bf R}(\mathcal{B}), \aunclfamily{\bf N}(\mathcal{C})}^{(2)}\Vert<1 $ and $\mathcal{E}=\mathcal{E}*_t\mathcal{A}_{\aunclfamily{\bf R}(\mathcal{B}), \aunclfamily{\bf N}(\mathcal{C})}^{(2)}*_s\mathcal{A}=\mathcal{A}*_t\mathcal{A}_{\aunclfamily{\bf R}(\mathcal{B}),\aunclfamily{\bf N}(\mathcal{C})}^{(2)}*_s\mathcal{E}$, then
    $$\mathcal{D}_{\aunclfamily{\bf R}(\mathcal{B}), \aunclfamily{\bf N}(\mathcal{C})}^{(2)}=\mathcal{A}_{\aunclfamily{\bf R}(\mathcal{B}), \aunclfamily{\bf N}(\mathcal{C})}^{(2)}*_s\bigBCrho
    =\bigBCdelta  *_t\mathcal{A}_{\aunclfamily{\bf R}(\mathcal{B}), \aunclfamily{\bf N}(\mathcal{C})}^{(2)},$$
    %$$\mathcal{D}_{\aunclfamily{\bf R}(\mathcal{B}),\aunclfamily{\bf N}(\mathcal{C})}^{(2)} - \mathcal{A}_{\aunclfamily{\bf R}(\mathcal{B}),\aunclfamily{\bf N}(\mathcal{C})}^{(2)} =\mathcal{A}_{\aunclfamily{\bf R}(\mathcal{B}),\aunclfamily{\bf N}(\mathcal{C})}^{(2)}*_s (\bigBCrho - \mathcal{I}).$$
    $$\frac{\Vert\mathcal{D}_{\aunclfamily{\bf R}(\mathcal{B}),\aunclfamily{\bf N}(\mathcal{C})}^{(2)} - \mathcal{A}_{\aunclfamily{\bf R}(\mathcal{B}),\aunclfamily{\bf N}(\mathcal{C})}^{(2)}\Vert}{\Vert\mathcal{A}_{\aunclfamily{\bf R}(\mathcal{B}),\aunclfamily{\bf N}(\mathcal{C})}^{(2)}\Vert} \leq \frac{2 - \Vert\mathcal{E} *_t \mathcal{A}_{\aunclfamily{\bf R}(\mathcal{B}),\aunclfamily{\bf N}(\mathcal{C})}^{(2)}\Vert}{1 - \Vert\mathcal{E} *_t \mathcal{A}_{\aunclfamily{\bf R}(\mathcal{B}),\aunclfamily{\bf N}(\mathcal{C})}^{(2)} \Vert}.$$
\end{theorem}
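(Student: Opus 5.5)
Write $\mathcal{X}=\mathcal{A}_{\aunclfamily{\bf R}(\mathcal{B}),\aunclfamily{\bf N}(\mathcal{C})}^{(2)}$ and $\mathcal{Y}=\mathcal{X}*_s\bigBCrho$. The plan is to use Theorem \ref{0.3B} for the outer-inverse property and the norm bound. Then I check separately that $\mathcal{Y}$ has range $\aunclfamily{\bf R}(\mathcal{B})$ and null space $\aunclfamily{\bf N}(\mathcal{C})$, following the proofs of Theorems \ref{0.4B} and \ref{0.4C}.

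First, $\Vert\mathcal{E}*_t\mathcal{X}\Vert<1$ and Lemma \ref{le-inv} give that $\bigBCrho$ and $\bigBCdelta$ exist. The hypothesis $\mathcal{E}=\mathcal{E}*_t\mathcal{X}*_s\mathcal{A}$ is exactly one of the alternative hypotheses of Theorem \ref{0.3B} with $\mathcal{A}^{(2)}=\mathcal{X}$. That theorem therefore gives that $\mathcal{Y}=\bigBCdelta*_t\mathcal{X}$ is an outer inverse of $\mathcal{D}$. The same theorem gives $\mathcal{Y}-\mathcal{X}=\mathcal{X}*_s(\bigBCrho-\mathcal{I})$ and the relative bound
\[
\frac{\Vert\mathcal{Y}-\mathcal{X}\Vert}{\Vert\mathcal{X}\Vert}\le \frac{2-\Vert\mathcal{E}*_t\mathcal{X}\Vert}{1-\Vert\mathcal{E}*_t\mathcal{X}\Vert}.
\]
Hence, once $\mathcal{Y}=\mathcal{D}_{\aunclfamily{\bf R}(\mathcal{B}),\aunclfamily{\bf N}(\mathcal{C})}^{(2)}$ is established, the inequality in the statement follows with nothing further to do.

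Second, I identify the range and null space. Since $\bigBCrho$ is invertible, $\aunclfamily{\bf R}(\mathcal{X}*_s\bigBCrho)=\aunclfamily{\bf R}(\mathcal{X})=\aunclfamily{\bf R}(\mathcal{B})$. Since $\bigBCdelta$ is invertible, $\aunclfamily{\bf N}(\bigBCdelta*_t\mathcal{X})=\aunclfamily{\bf N}(\mathcal{X})=\aunclfamily{\bf N}(\mathcal{C})$. The two representations of $\mathcal{Y}$ coincide by the identity $(\mathcal{I}+\mathcal{X}*_s\mathcal{E})*_t\mathcal{X}=\mathcal{X}*_s(\mathcal{I}+\mathcal{E}*_t\mathcal{X})$, which the proof of Theorem \ref{0.3B} already uses. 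So $\mathcal{Y}$ is an outer inverse of $\mathcal{D}$ with range $\aunclfamily{\bf R}(\mathcal{B})$ and null space $\aunclfamily{\bf N}(\mathcal{C})$. An outer inverse with prescribed range and null space is unique, so $\mathcal{Y}=\mathcal{D}_{\aunclfamily{\bf R}(\mathcal{B}),\aunclfamily{\bf N}(\mathcal{C})}^{(2)}$.

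The main obstacle is this uniqueness step, which the paper has not stated explicitly for tensors. I would prove it directly. Let $\mathcal{Y}_1,\mathcal{Y}_2$ be outer inverses of $\mathcal{D}$ with the same range and null space. From $\mathcal{Y}_i*_s\mathcal{D}*_t\mathcal{Y}_i=\mathcal{Y}_i$, the tensor $\mathcal{Y}_i*_s\mathcal{D}$ is a projector onto $\aunclfamily{\bf R}(\mathcal{Y}_i)$, and $\mathcal{D}*_t\mathcal{Y}_i$ is a projector with null space $\aunclfamily{\bf N}(\mathcal{Y}_i)$. This gives
\[
\mathcal{Y}_1=\mathcal{Y}_2*_s\mathcal{D}*_t\mathcal{Y}_1=\mathcal{Y}_2.
\]
The first equality uses $\aunclfamily{\bf R}(\mathcal{Y}_1)=\aunclfamily{\bf R}(\mathcal{Y}_2)$. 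The second uses $\aunclfamily{\bf N}(\mathcal{Y}_1)=\aunclfamily{\bf N}(\mathcal{Y}_2)$, which gives $\mathcal{Y}_2*_s(\mathcal{I}-\mathcal{D}*_t\mathcal{Y}_1)=\mathcal{O}$. Alternatively, one could combine Lemma \ref{RBinverse} for both the $(\mathcal{B})$- and $(\mathcal{C})$-inverse characterizations.

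The extra hypothesis $\mathcal{E}=\mathcal{A}*_t\mathcal{X}*_s\mathcal{E}$ is not needed for this argument. I would remark that it makes the statement symmetric, since it alone also suffices via the other alternative of Theorem \ref{0.3B}.
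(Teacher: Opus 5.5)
Your proposal is correct and follows essentially the route the paper uses for Theorem~\ref{0.4B}; the paper states this theorem itself without proof. In both, Theorem~\ref{0.3B} supplies the outer-inverse property and the norm bound, and multiplication by the invertible factors $\bigBCrho$ and $\bigBCdelta$ preserves the range and null space of your $\mathcal{X}$. Your explicit uniqueness argument fills in a step the paper leaves tacit, and your remark that one of the two hypotheses on $\mathcal{E}$ suffices is right. The only small gloss is that Lemma~\ref{le-inv} gives $\bigBCrho$ directly, whereas $\bigBCdelta$ exists because $(\mathcal{I}+\mathcal{X}*_s\mathcal{E})^{-1}=\mathcal{I}-\mathcal{X}*_s\bigBCrho*_s\mathcal{E}$.
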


Applying perturbation results for inner inverse given in Section 3, we will obtain perturbation formula for outer inverses.

\begin{theorem}\label{TheoremBInverse}
Let $\mathcal{A}, \mathcal{E}\in\mathbb{C}^{S(s)\times T(t)}$, $\mathcal{B}\in\mathbb{C}^{T(t)\times U(u)}$ and $\mathcal{A}^{(2)}_{\aunclfamily{\bf R}(\mathcal{B}),*}$ be the outer inverse of $\mathcal{A}$ with prescribed range $\mathcal{\aunclfamily{\bf R}(\mathcal{B})}$.
If the following conditions are met $\mathcal{D}=\mathcal{A}+\mathcal{E}$ with \\
1. $(\mathcal{D}^{(1)}*_s\mathcal{D}*_t\mathcal{B}*_u \mathcal{B}^{(1)})^2=\mathcal{D}^{(1)}*_s\mathcal{D}*_t\mathcal{B}*_u \mathcal{B}^{(1)}$;\\
2. $\rra{\mathcal{A}*_t \mathcal{B}}=\rra{\mathcal{D}*_t\mathcal{B}}=\rra{\mathcal{B}}$; \\
3. $\Vert \mathcal{A}^{(1)}\Vert \Vert \mathcal{E} \Vert <1 \ \  and \ \ \rra{\mathcal{A}}=\rra{\mathcal{D}}, $\\
then
\begin{equation}\label{The1.1}
\mathcal{D}^{(2)}_{\aunclfamily{\bf R}(\mathcal{B}),*}=\mathcal{B}*_u \mathcal{B}^{(1)}*_t\mathcal{A}^{(1)}*_s\bigrho
\end{equation}
%\begin{equation}\label{The1.2}
%\mathcal{D}^{(2)}_{\aunclfamily{\bf R}(\mathcal{B}),*}-\mathcal{A}^{(2)}_{\aunclfamily{\bf R}(\mathcal{B}),*}=\mathcal{B}*_u \mathcal{B}^{(1)}*_t\mathcal{A}^{(1)}*_t\bigrho -\mathcal{B}*_u (\mathcal{A}*_t\mathcal{B})^{(1)},
%\end{equation}
\begin{equation}\label{The1.3}
\frac{\Vert \mathcal{D}^{(2)}_{\aunclfamily{\bf R}(\mathcal{B}),*}-\mathcal{A}^{(2)}_{\aunclfamily{\bf R}(\mathcal{B}),*}\Vert}{\Vert \mathcal{A}^{(2)}_{\aunclfamily{\bf R}(\mathcal{B}),*}\Vert} \leq
\frac{\Vert \mathcal{B}*_u \mathcal{B}^{(1)} \Vert \Vert \mathcal{A}^{(1)} \Vert}{(1-\Vert \mathcal{E}*_t\mathcal{A}^{(1)} \Vert)\Vert \mathcal{B}*_u (\mathcal{A}*_t\mathcal{B})^{(1)} \Vert}+1.
\end{equation}
\end{theorem}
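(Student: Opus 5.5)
The plan is to chain three earlier results: Lemma \ref{BInverseEquation} for $\mathcal{D}$, then the reverse order law of Lemma \ref{ABLeqB1A1} for $(\mathcal{D}*_t\mathcal{B})^{(1)}$, and finally Theorem \ref{Chai1Inverse1Trank} (or Theorem \ref{APlusB1}) for $\mathcal{D}^{(1)}$. The norm bound will then come from Lemma \ref{le-inv} and the triangle inequality, exactly as in Theorem \ref{APlusB1}.

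\textbf{The representation \eqref{The1.1}.} By condition 2 we have $\rra{\mathcal{D}*_t\mathcal{B}}=\rra{\mathcal{B}}$, so Lemma \ref{BInverseEquation} applied to $\mathcal{D}$ gives
\[
\mathcal{D}^{(2)}_{\aunclfamily{\bf R}(\mathcal{B}),*}=\mathcal{B}*_u(\mathcal{D}*_t\mathcal{B})^{(1)}.
\]
Next take $\mathcal{P}=\mathcal{D}$ and $\mathcal{Q}=\mathcal{B}$ in Lemma \ref{ABLeqB1A1}. Condition 1 is exactly the idempotency requirement \eqref{0.1}, so $(\mathcal{D}*_t\mathcal{B})^{(1)}=\mathcal{B}^{(1)}*_t\mathcal{D}^{(1)}$. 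Condition 3 is precisely the hypothesis of Theorem \ref{Chai1Inverse1Trank}, which yields $\mathcal{D}^{(1)}=\mathcal{A}^{(1)}*_s\bigrho$. Substituting both identities gives \eqref{The1.1}.

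The delicate point is that condition 1 refers to some inner inverse $\mathcal{D}^{(1)}$. We must read it with the specific choice $\mathcal{D}^{(1)}=\mathcal{A}^{(1)}*_s\bigrho$, which makes sense because Theorem \ref{Chai1Inverse1Trank} guarantees this is an inner inverse. We must also read Lemma \ref{ABLeqB1A1} as producing an inner inverse of $\mathcal{D}*_t\mathcal{B}$ of the form $\mathcal{B}^{(1)}*_t\mathcal{D}^{(1)}$ for those particular inner inverses. I would state this interpretation explicitly. Under it, the object $\mathcal{B}*_u\mathcal{B}^{(1)}*_t\mathcal{A}^{(1)}*_s\bigrho$ is a legitimate choice of $\mathcal{B}*_u(\mathcal{D}*_t\mathcal{B})^{(1)}$.

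\textbf{The bound \eqref{The1.3}.} Condition 2 gives $\rra{\mathcal{A}*_t\mathcal{B}}=\rra{\mathcal{B}}$, so Lemma \ref{BInverseEquation} for $\mathcal{A}$ gives $\mathcal{A}^{(2)}_{\aunclfamily{\bf R}(\mathcal{B}),*}=\mathcal{B}*_u(\mathcal{A}*_t\mathcal{B})^{(1)}$. Since $\|\mathcal{E}*_t\mathcal{A}^{(1)}\|\le\|\mathcal{E}\|\|\mathcal{A}^{(1)}\|<1$, Lemma \ref{le-inv} gives $\|\bigrho\|\le 1/(1-\|\mathcal{E}*_t\mathcal{A}^{(1)}\|)$. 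By submultiplicativity,
\[
\|\mathcal{D}^{(2)}_{\aunclfamily{\bf R}(\mathcal{B}),*}\|\le \frac{\|\mathcal{B}*_u\mathcal{B}^{(1)}\|\,\|\mathcal{A}^{(1)}\|}{1-\|\mathcal{E}*_t\mathcal{A}^{(1)}\|}.
\]
The triangle inequality then gives $\|\mathcal{D}^{(2)}_{\aunclfamily{\bf R}(\mathcal{B}),*}-\mathcal{A}^{(2)}_{\aunclfamily{\bf R}(\mathcal{B}),*}\|\le\|\mathcal{D}^{(2)}_{\aunclfamily{\bf R}(\mathcal{B}),*}\|+\|\mathcal{A}^{(2)}_{\aunclfamily{\bf R}(\mathcal{B}),*}\|$. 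Dividing by $\|\mathcal{A}^{(2)}_{\aunclfamily{\bf R}(\mathcal{B}),*}\|=\|\mathcal{B}*_u(\mathcal{A}*_t\mathcal{B})^{(1)}\|$ yields \eqref{The1.3}. This step assumes $\mathcal{A}^{(2)}_{\aunclfamily{\bf R}(\mathcal{B}),*}\neq\mathcal{O}$, which is implicit in the statement.

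\textbf{Main obstacle.} The estimate itself is routine. The real difficulty is the bookkeeping about which inner inverses appear in condition 1 and in Lemma \ref{ABLeqB1A1}, since that lemma is a statement about classes of inner inverses. I would handle it by fixing $\mathcal{D}^{(1)}=\mathcal{A}^{(1)}*_s\bigrho$ from the outset. I would also check directly that $\mathcal{B}^{(1)}*_t\mathcal{D}^{(1)}$ is an inner inverse of $\mathcal{D}*_t\mathcal{B}$, by expanding $\mathcal{D}*_t\mathcal{B}*_u\mathcal{B}^{(1)}*_t\mathcal{D}^{(1)}*_s\mathcal{D}*_t\mathcal{B}$ and using the idempotency in condition 1. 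This makes the argument independent of how Lemma \ref{ABLeqB1A1} is read.
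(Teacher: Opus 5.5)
Your proposal is correct and follows the paper's proof essentially step for step. You apply Lemma \ref{BInverseEquation} to $\mathcal{D}$ and to $\mathcal{A}$ using condition 2, invoke the reverse order law of Lemma \ref{ABLeqB1A1} through condition 1 with $\mathcal{D}^{(1)}=\mathcal{A}^{(1)}*_s\bigrho$ supplied by Theorem \ref{Chai1Inverse1Trank}, and finish with Lemma \ref{le-inv} and the triangle inequality. Fixing $\mathcal{D}^{(1)}$ explicitly and checking directly that $\mathcal{D}*_t\mathcal{B}*_u\mathcal{B}^{(1)}*_t\mathcal{D}^{(1)}*_s\mathcal{D}*_t\mathcal{B}=\mathcal{D}*_t P^2*_t\mathcal{B}=\mathcal{D}*_t P*_t\mathcal{B}=\mathcal{D}*_t\mathcal{B}$, with $P=\mathcal{D}^{(1)}*_s\mathcal{D}*_t\mathcal{B}*_u\mathcal{B}^{(1)}$, makes rigorous a point the paper leaves implicit.
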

\begin{proof}
By the condition 2 and \eqref{0.31}, $\mathcal{D}^{(2)}_{\aunclfamily{\bf R}(\mathcal{B}),*}$ can be written as:
\begin{equation}\label{Proof1.1}
\mathcal{D}^{(2)}_{\aunclfamily{\bf R}(\mathcal{B}),*}=\mathcal{B}*_u (\mathcal{D}*_t\mathcal{B})^{(1)}.
\end{equation}
From $\mathcal{D}=\mathcal{A}+\mathcal{E}$, we get:
\begin{equation}\label{Proof1.2}
\mathcal{D}*_t\mathcal{B}=(\mathcal{A}+\mathcal{E})*_t\mathcal{B}=\mathcal{A}*_t\mathcal{B}+\mathcal{E}*_t\mathcal{B}.
\end{equation}
Lemma \ref{le-inv} guarantees that $\mathcal{I}+\mathcal{A}^{(1)}*_s\mathcal{E}$ is invertible.
Since $\rra{\mathcal{A}}=\rra{\mathcal{D}}$, by to Theorem \ref{Chai1Inverse1Trank}, an inner inverse of $\mathcal{D}=\mathcal{A}+\mathcal{E}$ is equal to
\begin{equation}\label{Proof1.3}
\mathcal{D}^{(1)}=\mathcal{A}^{(1)}*_s \bigrho.
\end{equation}
By considering equations 1 and 2, along with \eqref{0.1}, and substituting \(\mathcal{D}^{(1)}\) from \label{Proof1.3} into \((\mathcal{D} * _n \mathcal{B})^{(1)}\) gives us the following result:
\begin{equation}\label{Proof1.4}
(\mathcal{D}*_t\mathcal{B})^{(1)}=\mathcal{B}^{(1)}*_t\mathcal{D}^{(1)}=\mathcal{B}^{(1)}*_t\mathcal{A}^{(1)}*_s\bigrho.
\end{equation}
Thus,
\begin{equation}\label{Proof1.5}
\mathcal{D}^{(2)}_{\aunclfamily{\bf R}(\mathcal{B}),*}=\mathcal{B}*_u (\mathcal{D}*_t\mathcal{B})^{(1)}=\mathcal{B}*_u \mathcal{B}^{(1)}*_t\mathcal{D}^{(1)}=\mathcal{B}*_u \mathcal{B}^{(1)}*_t\mathcal{A}^{(1)}*_s\bigrho.
\end{equation}
By \eqref{0.31} and \eqref{Proof1.5}, it follows $\mathcal{A}^{(2)}_{\aunclfamily{\bf R}(\mathcal{B}),*}=\mathcal{B}*_u (\mathcal{A}*_t\mathcal{B})^{(1)}$, and further
\begin{equation}\label{Proof1.6}
\mathcal{D}^{(2)}_{\aunclfamily{\bf R}(\mathcal{B}),*}-\mathcal{A}^{(2)}_{\aunclfamily{\bf R}(\mathcal{B}),*}=\mathcal{B}*_u \mathcal{B}^{(1)}*_t\mathcal{A}^{(1)}*_s\bigrho-\mathcal{B}*_u (\mathcal{A}*_t\mathcal{B})^{(1)}.
\end{equation}
The application of the norm to equation \eqref{Proof1.6} results in the following outcome:
\begin{equation}\label{Proof1.7}
\Vert \mathcal{D}^{(2)}_{\aunclfamily{\bf R}(\mathcal{B}),*}-\mathcal{A}^{(2)}_{\aunclfamily{\bf R}(\mathcal{B}),*} \Vert \leq \Vert \mathcal{B}*_u \mathcal{B}^{(1)}\Vert\Vert \mathcal{A}^{(1)}\Vert\Vert \bigrho\Vert +\Vert \mathcal{B}*_u (\mathcal{A}*_t\mathcal{B})^{(1)}\Vert.
\end{equation}
From \eqref{0.4} it can be obtained
\begin{equation}\label{Proof1.8}
\Vert \bigrho\Vert \leq \frac{1}{1-\Vert \mathcal{E}*_t\mathcal{A}^{(1)}\Vert}.
\end{equation}
Substituting the expression from \eqref{Proof1.8} into \eqref{Proof1.7} yields
\begin{equation}\label{Proof1.9}
\Vert \mathcal{D}^{(2)}_{\aunclfamily{\bf R}(\mathcal{B}),*}-\mathcal{A}^{(2)}_{\aunclfamily{\bf R}(\mathcal{B}),*} \Vert \leq \frac{\Vert \mathcal{B}*_u \mathcal{B}^{(1)}\Vert\Vert \mathcal{A}^{(1)}\Vert}{1-\Vert\mathcal{E}*_t\mathcal{A}^{(1)}\Vert}+\Vert \mathcal{B}*_u (\mathcal{A}*_t\mathcal{B})^{(1)}\Vert.
\end{equation}
Normalization by $\Vert \mathcal{A}^{(2)}_{\aunclfamily{\bf R}(\mathcal{B}),*}\Vert$ results in
\begin{equation}\label{Proof1.10}
\frac{\Vert \mathcal{D}^{(2)}_{\aunclfamily{\bf R}(\mathcal{B}),*}-\mathcal{A}^{(2)}_{\aunclfamily{\bf R}(\mathcal{B}),*}\Vert}{\Vert \mathcal{A}^{(2)}_{\aunclfamily{\bf R}(\mathcal{B}),*}\Vert}
\leq \frac{\Vert \mathcal{B}*_u \mathcal{B}^{(1)}\Vert\Vert \mathcal{A}^{(1)}\Vert}{(1-\Vert \mathcal{E}*_t\mathcal{A}^{(1)}\Vert)\Vert \mathcal{B}*_u (\mathcal{A}*_t\mathcal{B})^{(1)}\Vert}+1,
\end{equation}
which ends the proof.
\end{proof}

\begin{example}
Let $\mathcal{A}\in\mathbb{R}^{(3\times 2\times 1)\times (2\times 3)}$ be defined by
\begin{equation*}
\aligned
\mathcal{A}(:,:,1,1,1)&=\begin{bmatrix} \ 1 & 0  \ \\ \ 1 & 0 \ \\ \ 0 & 0 \  \end{bmatrix};\
\mathcal{A}(:,:,1,2,1)=\begin{bmatrix} \ 1 & 0  \ \\ \ 1 & 0 \ \\ \ 0 & 0 \  \end{bmatrix};\
\mathcal{A}(:,:,1,1,2)=\begin{bmatrix} \ 0 & 1  \ \\ \ 0 & 0 \ \\ \ 1 & 0 \  \end{bmatrix};\\
\mathcal{A}(:,:,1,2,2)&=\begin{bmatrix} \ 0 & 1  \ \\ \ 0 & 0 \ \\ \ 1 & 0 \  \end{bmatrix}; \
\mathcal{A}(:,:,1,1,3)=\begin{bmatrix} \ 0 & 0  \ \\ \ 0 & 0 \ \\ \ 0 & 0 \  \end{bmatrix};\
\mathcal{A}(:,:,1,2,3)=\begin{bmatrix} \ 0 & 0  \ \\ \ 0 & 0 \ \\ \ 0 & 0 \  \end{bmatrix},
\endaligned
\end{equation*}
and $\mathcal{B}\in\mathbb{R}^{(2\times 3)\times (2\times 2)}$ by
 \begin{center}
$\mathcal{B}(:,:,1,1)=\begin{bmatrix} \ 5 & 0 & 0 \ \\ \ 5 & 0 & 0 \ \end{bmatrix};$
$\mathcal{B}(:,:,1,2)=\begin{bmatrix} \ 3 & 0 & 0 \ \\ \ 3 & 0 & 0 \  \end{bmatrix};$\\
$\mathcal{B}(:,:,2,1)=\begin{bmatrix} \ 0 & 2 & 0 \ \\ \ 0 & 2 & 0 \  \end{bmatrix};$
$\mathcal{B}(:,:,2,2)=\begin{bmatrix} \ 0 & 1 & 0 \ \\ \ 0 & 1 & 0 \  \end{bmatrix}$.
\end{center}
Consider perturbation tensor $\mathcal{E}\in\mathbb{R}^{(3\times 2\times 1)\times (2\times 3)}$ with entries
\begin{equation*}
\aligned
\mathcal{E}(:,:,1,1,1)&=\begin{bmatrix} \ 0.1 & 0  \ \\ \ 0.1 & 0 \ \\ \ 0 & 0 \ \end{bmatrix};\
\mathcal{E}(:,:,1,2,1)=\begin{bmatrix} \ 0.1 & 0  \ \\ \ 0.1 & 0 \ \\ \ 0 & 0 \ \end{bmatrix};\
\mathcal{E}(:,:,1,1,2)=\begin{bmatrix} \ 0 & 0.1  \ \\ \ 0 & 0 \ \\ \ 0 & 0 \ \end{bmatrix};\\
\mathcal{E}(:,:,1,2,2)&=\begin{bmatrix} \ 0 & 0.1  \ \\ \ 0 & 0 \ \\ \ 0 & 0 \ \end{bmatrix};\
\mathcal{E}(:,:,1,1,3)=\begin{bmatrix} \ 0 & 0  \ \\ \ 0 & 0 \ \\ \ 0 & 0 \  \end{bmatrix};\
\mathcal{E}(:,:,1,2,3)=\begin{bmatrix} \ 0 & 0  \ \\ \ 0 & 0 \ \\ \ 0 & 0 \  \end{bmatrix}.
\endaligned
\end{equation*}
Next, Algorithm 4 from \cite{LMA685} is used to compute $\mathcal{A}^{(1)}$.
The F-norms of $\mathcal{A}^{(1)}$ and $\mathcal{E}$ are equal to
\begin{center}
$\Vert \mathcal{A}^{(1)} \Vert_F=0.7071,\ \Vert \mathcal{E} \Vert_F=0.2449,\ \Vert \mathcal{A}^{(1)}\Vert_F\Vert \mathcal{E} \Vert_F=0.1732<1$.
\end{center}
Then, Algorithm 3 form \cite{LMA685} is used to verify
$\rra{\mathcal{A}*_2 \mathcal{B}}=\rra{\mathcal{D}*_2\mathcal{B}}=\rra{\mathcal{B}}=2$ and $\rra{\mathcal{A}}=\rra{\mathcal{D}}=2$.
Indeed, after performing the necessary calculations, it can be verified that $ \mathcal{D}^{(2)}_{\aunclfamily{\bf R}(\mathcal{B}),*} \in \mathbb{R}^{(2\times 3)\times(3\times2\times 1)} $ is expressed as
 \begin{center}
$\mathcal{D}^{(2)}_{\aunclfamily{\bf R}(\mathcal{B}),*}(:,:,1,1,1)=\begin{bmatrix} \ 0.2273 & 0 & 0 \ \\ \ 0.2273 & 0 & 0 \ \end{bmatrix};$
$\mathcal{D}^{(2)}_{\aunclfamily{\bf R}(\mathcal{B}),*}(:,:,2,1,1)=\begin{bmatrix}\  0.2273 & 0 & 0 \ \\ \ 0.2273 & 0 & 0 \ \end{bmatrix};$
$\mathcal{D}^{(2)}_{\aunclfamily{\bf R}(\mathcal{B}),*}(:,:,3,1,1)=\begin{bmatrix} \ 0 & 0.2381 & 0 \ \\ \ 0 & 0.2381 & 0 \ \end{bmatrix};$
$\mathcal{D}^{(2)}_{\aunclfamily{\bf R}(\mathcal{B}),*}(:,:,1,2,1)=\begin{bmatrix}\ 0 & 0.2381 & 0 \ \\ \ 0 & 0.2381 & 0 \ \end{bmatrix};$
$\mathcal{D}^{(2)}_{\aunclfamily{\bf R}(\mathcal{B}),*}(:,:,2,2,1)=\begin{bmatrix} \ 0 & 0 & 0 \ \\ \ 0 & 0 & 0 \ \end{bmatrix};$
$\mathcal{D}^{(2)}_{\aunclfamily{\bf R}(\mathcal{B}),*}(:,:,3,2,1)=\begin{bmatrix} \ 0 & 0 & 0 \ \\ \ 0 & 0 & 0 \ \end{bmatrix}$,
 \end{center}
 and  $ \mathcal{A}^{(2)}_{\aunclfamily{\bf R}(\mathcal{B}),*}\in \mathbb{R}^{ (2\times 3)\times(3\times2\times 1)} $ is given by
\begin{equation*}
\aligned
\mathcal{A}^{(2)}_{\aunclfamily{\bf R}(\mathcal{B}),*}(:,:,1,1,1)&=\begin{bmatrix} \ 0.25 & 0 & 0 \ \\ \ 0.25 & 0 & 0 \ \end{bmatrix};\
\mathcal{A}^{(2)}_{\aunclfamily{\bf R}(\mathcal{B}),*}(:,:,2,1,1)=\begin{bmatrix}\ 0.25 & 0 & 0 \ \\ \ 0.25 & 0 & 0 \ \end{bmatrix};\\
\mathcal{A}^{(2)}_{\aunclfamily{\bf R}(\mathcal{B}),*}(:,:,3,1,1)&=\begin{bmatrix} \ 0 & 0.25 & 0 \ \\ \ 0 & 0.25 & 0 \ \end{bmatrix};\
\mathcal{A}^{(2)}_{\aunclfamily{\bf R}(\mathcal{B}),*}(:,:,1,2,1)=\begin{bmatrix}\ 0 & 0.25 & 0 \ \\ \ 0 & 0.25 & 0 \ \end{bmatrix};\\
\mathcal{A}^{(2)}_{\aunclfamily{\bf R}(\mathcal{B}),*}(:,:,2,2,1)&=\begin{bmatrix} \ 0 & 0 & 0 \ \\ \ 0 & 0 & 0 \ \end{bmatrix};\
\mathcal{A}^{(2)}_{\aunclfamily{\bf R}(\mathcal{B}),*}(:,:,3,2,1)=\begin{bmatrix} \ 0 & 0 & 0 \ \\ \ 0 & 0 & 0 \ \end{bmatrix}.
\endaligned
\end{equation*}
By calculation, $(\mathcal{D}^{(1)}*_3\mathcal{D}*_2\mathcal{B}*_2\mathcal{B}^{(1)})$  is idempotent.
Conditions are met, we proceed with the computation
\begin{center}
 $\Vert \mathcal{D}^{(2)}_{\aunclfamily{\bf R}(\mathcal{B}),*}-\mathcal{A}^{(2)}_{\aunclfamily{\bf R}(\mathcal{B}),*}\Vert_F=0.0513$, \ \ \ \
$\Vert \mathcal{E}*_2\mathcal{A}^{(1)} \Vert_F=0.1225$, \ \ \ \
$\Vert \mathcal{B}*_2\mathcal{B}^{(1)} \Vert_F=1.4142$, \ \\
 $\Vert \mathcal{A}^{(2)}_{\aunclfamily{\bf R}(\mathcal{B}),*}\Vert_F = \Vert \mathcal{B}*_2(\mathcal{A}*_2\mathcal{B})^{(1)} \Vert_F=0.7071$.
 \end{center}
Further calculation yields
$$\frac{\Vert \mathcal{D}^{(2)}_{\aunclfamily{\bf R}(\mathcal{B}),*}-\mathcal{A}^{(2)}_{\aunclfamily{\bf R}(\mathcal{B}),*}\Vert_F}{\Vert \mathcal{A}^{(2)}_{\aunclfamily{\bf R}(\mathcal{B}),*}\Vert_F}=0.0726,$$
$$\frac{\Vert \mathcal{B}*_2\mathcal{B}^{(1)} \Vert_F \Vert \mathcal{A}^{(1)} \Vert_F}{(1-\Vert \mathcal{E}*_2\mathcal{A}^{(1)} \Vert_F)\Vert \mathcal{B}*_2(\mathcal{A}*_2\mathcal{B})^{(1)} \Vert_F}+1=2.6116.$$
Therefore, the inequality of Theorem \ref{TheoremBInverse} holds.
\end{example}

\begin{corollary}
Based on the condition of Theorem \ref{TheoremBInverse}, if $(\mathcal{A}*_t\mathcal{B})^{(1)}=\mathcal{B}^{(1)}*_t\mathcal{A}^{(1)}$, the following inequality holds:
$$\frac{\Vert \mathcal{D}^{(2)}_{\aunclfamily{\bf R}(\mathcal{B}),*}-\mathcal{A}^{(2)}_{\aunclfamily{\bf R}(\mathcal{B}),*}\Vert}{\Vert \mathcal{A}^{(2)}_{\aunclfamily{\bf R}(\mathcal{B}),*}\Vert} \leq \frac {1}{1-\Vert \mathcal{E}*_t\mathcal{A}^{(1)} \Vert}+1.$$
\end{corollary}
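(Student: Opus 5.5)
Under the hypotheses of Theorem \ref{TheoremBInverse}, the extra assumption $(\mathcal{A}*_t\mathcal{B})^{(1)}=\mathcal{B}^{(1)}*_t\mathcal{A}^{(1)}$ gives two representations that share the prefactor $\mathcal{B}*_u\mathcal{B}^{(1)}*_t\mathcal{A}^{(1)}$. The plan is to factor this out of the difference, so that the constant ratio in \eqref{The1.3} disappears.

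By condition 2 and Lemma \ref{BInverseEquation},
\[
\mathcal{A}^{(2)}_{\aunclfamily{\bf R}(\mathcal{B}),*}=\mathcal{B}*_u(\mathcal{A}*_t\mathcal{B})^{(1)}=\mathcal{B}*_u\mathcal{B}^{(1)}*_t\mathcal{A}^{(1)}.
\]
By \eqref{The1.1},
\[
\mathcal{D}^{(2)}_{\aunclfamily{\bf R}(\mathcal{B}),*}=\mathcal{B}*_u\mathcal{B}^{(1)}*_t\mathcal{A}^{(1)}*_s\bigrho=\mathcal{A}^{(2)}_{\aunclfamily{\bf R}(\mathcal{B}),*}*_s\bigrho .
\]
Hence
\[
\mathcal{D}^{(2)}_{\aunclfamily{\bf R}(\mathcal{B}),*}-\mathcal{A}^{(2)}_{\aunclfamily{\bf R}(\mathcal{B}),*}=\mathcal{A}^{(2)}_{\aunclfamily{\bf R}(\mathcal{B}),*}*_s(\bigrho-\mathcal{I}).
\]

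Next we take norms. Submultiplicativity and the triangle inequality give
\[
\Vert \mathcal{D}^{(2)}_{\aunclfamily{\bf R}(\mathcal{B}),*}-\mathcal{A}^{(2)}_{\aunclfamily{\bf R}(\mathcal{B}),*}\Vert\le \Vert\mathcal{A}^{(2)}_{\aunclfamily{\bf R}(\mathcal{B}),*}\Vert\,\bigl(\Vert\bigrho\Vert+\Vert\mathcal{I}\Vert\bigr).
\]
Since $\Vert\mathcal{E}*_t\mathcal{A}^{(1)}\Vert\le\Vert\mathcal{A}^{(1)}\Vert\Vert\mathcal{E}\Vert<1$, Lemma \ref{le-inv} yields $\Vert\bigrho\Vert\le 1/(1-\Vert\mathcal{E}*_t\mathcal{A}^{(1)}\Vert)$. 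Dividing by $\Vert\mathcal{A}^{(2)}_{\aunclfamily{\bf R}(\mathcal{B}),*}\Vert$ (assumed nonzero) gives the stated bound.

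An alternative route avoids factoring: the same identity makes the numerator $\Vert\mathcal{B}*_u\mathcal{B}^{(1)}*_t\mathcal{A}^{(1)}\Vert=\Vert\mathcal{B}*_u(\mathcal{A}*_t\mathcal{B})^{(1)}\Vert$ in the first term of \eqref{The1.3}, if that term is left unsplit as $\Vert\mathcal{B}*_u\mathcal{B}^{(1)}*_t\mathcal{A}^{(1)}\Vert$ before applying submultiplicativity. The ratio then equals $1$.

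The only delicate point is the term $\Vert\mathcal{I}\Vert$ from the triangle inequality. It equals $1$ for the spectral norm, but for the Frobenius norm it exceeds $1$. To avoid relying on it, I would instead bound $\Vert\mathcal{A}^{(2)}*_s\bigrho\Vert+\Vert\mathcal{A}^{(2)}\Vert$ directly, as in the proof of Theorem \ref{TheoremBInverse}: bound $\Vert\mathcal{D}^{(2)}\Vert$ and $\Vert\mathcal{A}^{(2)}\Vert$ separately rather than factoring out $\bigrho-\mathcal{I}$. This gives the corollary for any consistent norm.
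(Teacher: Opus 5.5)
Your proof is correct and is essentially the argument the paper intends (the corollary has no written proof). Under the extra hypothesis $\mathcal{A}^{(2)}_{\aunclfamily{\bf R}(\mathcal{B}),*}=\mathcal{B}*_u\mathcal{B}^{(1)}*_t\mathcal{A}^{(1)}$, so $\mathcal{D}^{(2)}_{\aunclfamily{\bf R}(\mathcal{B}),*}=\mathcal{A}^{(2)}_{\aunclfamily{\bf R}(\mathcal{B}),*}*_s\bigrho$, and one bounds $\Vert\mathcal{D}^{(2)}_{\aunclfamily{\bf R}(\mathcal{B}),*}\Vert+\Vert\mathcal{A}^{(2)}_{\aunclfamily{\bf R}(\mathcal{B}),*}\Vert$ as in Theorem \ref{0.3B}; you rightly avoid substituting into the already-split bound \eqref{The1.3}, where the ratio is only $\geq 1$.

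One caveat: your closing claim that this works for any consistent norm is too strong. Lemma \ref{le-inv} itself presupposes $\Vert\mathcal{I}\Vert=1$, and for $\Vert\cdot\Vert_F$ it already fails at $\mathcal{F}=\mathcal{O}$. In the Frobenius case, use instead $\Vert\mathcal{A}^{(2)}_{\aunclfamily{\bf R}(\mathcal{B}),*}*_s\bigrho\Vert_F\leq\Vert\mathcal{A}^{(2)}_{\aunclfamily{\bf R}(\mathcal{B}),*}\Vert_F\,\Vert\bigrho\Vert_2$ together with $\Vert\bigrho\Vert_2\leq 1/(1-\Vert\mathcal{E}*_t\mathcal{A}^{(1)}\Vert_F)$.
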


\begin{theorem}\label{TheoremCInverse}
Let $\mathcal{A}, \mathcal{E}\in\mathbb{C}^{S(s)\times T(t)}$, $\mathcal{C}\in\mathbb{C}^{V(v)\times S(s)}$ and $\mathcal{A}^{(2)}_{*,\aunclfamily{\bf N}(\mathcal{C})}$ be an outer inverse of $\mathcal{A}$ possessing prescribed null space $\aunclfamily{\bf N}(\mathcal{C})$.
If the following conditions hold for $\mathcal{D}=\mathcal{A}+\mathcal{E}$, it can be concluded \\
1. $(\mathcal{C}^{(1)}*_v\mathcal{C}*_s\mathcal{D}*_t\mathcal{D}^{(1)})^2=\mathcal{C}^{(1)}*_v\mathcal{C}*_s\mathcal{D}*_t\mathcal{D}^{(1)}$;\\
2. $\rra{\mathcal{C}*_s\mathcal{D}}=\rra{\mathcal{C}*_s\mathcal{A}}=\rra{\mathcal{C}}$; \\
3. $\Vert \mathcal{A}^{(1)}\Vert \Vert \mathcal{E} \Vert <1 \ \  and \ \ \rra{\mathcal{A}}=\rra{\mathcal{D}},$\\
then\\
\begin{equation}\label{The2.1}
\mathcal{D}^{(2)}_{*,\aunclfamily{\bf N}(\mathcal{C})}=\mathcal{A}^{(1)}*_s \bigrho*_s\mathcal{C}^{(1)}*_v\mathcal{C},
\end{equation}
%\begin{equation}\label{The2.2}
%\mathcal{D}^{(2)}_{*,N(\mathcal{C})}-\mathcal{A}^{(2)}_{*,N(\mathcal{C} )}=\mathcal{A}^{(1)}*_s \bigrho *_s \mathcal{C}^{(1)}*_v \mathcal{C} -  (\mathcal{C}*_s \mathcal{A} )^{(1)} *_v \mathcal{C},
%\end{equation}
\begin{equation}\label{The3.3}
\frac{\Vert \mathcal{D}^{(2)}_{*,\aunclfamily{\bf N}(\mathcal{C})}-\mathcal{A}^{(2)}_{*,\aunclfamily{\bf N}(\mathcal{C})}\Vert}{\Vert \mathcal{A}^{(2)}_{*,\aunclfamily{\bf N}(\mathcal{C})}\Vert} \leq
\frac{\Vert \mathcal{A}^{(1)} \Vert \Vert \mathcal{C}^{(1)}*_v \mathcal{C} \Vert}{(1-\Vert \mathcal{E}*_t\mathcal{A}^{(1)} \Vert) \Vert  (\mathcal{C}*_s \mathcal{A})^{(1)} *_v \mathcal{C} \Vert}+1.
\end{equation}
\end{theorem}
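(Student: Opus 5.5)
The plan is to mirror the proof of Theorem \ref{TheoremBInverse}, interchanging the roles of left and right factors. Condition 2 gives $\rra{\mathcal{C}*_s\mathcal{D}}=\rra{\mathcal{C}}$ and $\rra{\mathcal{C}*_s\mathcal{A}}=\rra{\mathcal{C}}$. Lemma \ref{CInverseEquation}, applied first to $\mathcal{D}$ and then to $\mathcal{A}$, therefore yields
\[
\mathcal{D}^{(2)}_{*,\aunclfamily{\bf N}(\mathcal{C})}=(\mathcal{C}*_s\mathcal{D})^{(1)}*_v\mathcal{C},\qquad
\mathcal{A}^{(2)}_{*,\aunclfamily{\bf N}(\mathcal{C})}=(\mathcal{C}*_s\mathcal{A})^{(1)}*_v\mathcal{C}.
\]

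Next, condition 3 lets us invoke Theorem \ref{Chai1Inverse1Trank}, which says that $\mathcal{D}^{(1)}=\mathcal{A}^{(1)}*_s\bigrho$ is an inner inverse of $\mathcal{D}$. Here $\bigrho$ exists by Lemma \ref{le-inv}, since $\Vert\mathcal{E}*_t\mathcal{A}^{(1)}\Vert\le\Vert\mathcal{A}^{(1)}\Vert\Vert\mathcal{E}\Vert<1$.

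We then apply the reverse order law of Lemma \ref{ABLeqB1A1} with $\mathcal{P}=\mathcal{C}$ and $\mathcal{Q}=\mathcal{D}$. Its hypothesis \eqref{0.1} becomes exactly the idempotency in condition 1, with $\mathcal{D}^{(1)}$ the inner inverse just constructed. This gives
\[
(\mathcal{C}*_s\mathcal{D})^{(1)}=\mathcal{D}^{(1)}*_s\mathcal{C}^{(1)}=\mathcal{A}^{(1)}*_s\bigrho*_s\mathcal{C}^{(1)}.
\]
Substituting this into the expression for $\mathcal{D}^{(2)}_{*,\aunclfamily{\bf N}(\mathcal{C})}$ proves \eqref{The2.1}.

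For the bound \eqref{The3.3}, subtract the two representations:
\[
\mathcal{D}^{(2)}_{*,\aunclfamily{\bf N}(\mathcal{C})}-\mathcal{A}^{(2)}_{*,\aunclfamily{\bf N}(\mathcal{C})}=\mathcal{A}^{(1)}*_s\bigrho*_s\mathcal{C}^{(1)}*_v\mathcal{C}-(\mathcal{C}*_s\mathcal{A})^{(1)}*_v\mathcal{C}.
\]
Apply the triangle inequality and submultiplicativity of the consistent norm, and bound $\Vert\bigrho\Vert\le 1/(1-\Vert\mathcal{E}*_t\mathcal{A}^{(1)}\Vert)$ by \eqref{0.4}. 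Dividing by $\Vert\mathcal{A}^{(2)}_{*,\aunclfamily{\bf N}(\mathcal{C})}\Vert=\Vert(\mathcal{C}*_s\mathcal{A})^{(1)}*_v\mathcal{C}\Vert$ then gives the stated inequality, with the constant term $+1$.

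The only delicate step is the reverse-order-law application. The inner inverse $\mathcal{D}^{(1)}$ appearing in condition 1 must be the specific one $\mathcal{A}^{(1)}*_s\bigrho$, so that Lemma \ref{ABLeqB1A1} produces exactly the required representation. We also need $(\mathcal{C}*_s\mathcal{D})^{(1)}$ in Lemma \ref{CInverseEquation} to be allowed to be this particular inner inverse. I will treat the inner inverses as in the proof of Theorem \ref{TheoremBInverse}: the conclusion of Lemma \ref{ABLeqB1A1} is read as saying that $\mathcal{D}^{(1)}*_s\mathcal{C}^{(1)}$ is an inner inverse of $\mathcal{C}*_s\mathcal{D}$. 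Lemma \ref{CInverseEquation} holds for any inner inverse of $\mathcal{C}*_s\mathcal{D}$, so this suffices. Everything else is routine.
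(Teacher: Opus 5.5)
Your proposal is correct and follows the paper's proof step for step. Like the paper, you use Lemma~\ref{CInverseEquation} for both $\mathcal{D}$ and $\mathcal{A}$, then Theorem~\ref{Chai1Inverse1Trank} to get $\mathcal{D}^{(1)}=\mathcal{A}^{(1)}*_s\bigrho$, then Lemma~\ref{ABLeqB1A1} with $\mathcal{P}=\mathcal{C}$, $\mathcal{Q}=\mathcal{D}$ via condition 1, and finally the triangle inequality with \eqref{0.4}. Your remark on the delicate step makes explicit what the paper leaves implicit: Lemma~\ref{ABLeqB1A1} is read as saying that this particular $\mathcal{D}^{(1)}*_s\mathcal{C}^{(1)}$ is an inner inverse of $\mathcal{C}*_s\mathcal{D}$, which suffices because Lemma~\ref{CInverseEquation} accepts any inner inverse.
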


\begin{proof}
According to condition 2 and \eqref{0.32}, $\mathcal{D}^{(2)}_{*,\aunclfamily{\bf N}(\mathcal{C})}$ can be  expressed as follows:
\begin{equation}\label{Proof2.1}
\mathcal{D}^{(2)}_{*,\aunclfamily{\bf N}(\mathcal{C})}= (\mathcal{C}*_s \mathcal{D} )^{(1)} *_v \mathcal{C}.
\end{equation}
From $\mathcal{D}=\mathcal{A}+\mathcal{E}$, we can derive valuable insight:
\begin{equation}\label{Proof2.2}
\mathcal{C}*_s \mathcal{D} =\mathcal{C}*_s \mathcal{A} + \mathcal{C}*_s \mathcal{E}.
\end{equation}
Lemma \ref{le-inv} yields invertibility of $\mathcal{I}+\mathcal{E}*_t\mathcal{A}^{(1)} $.
Next $\rra{\mathcal{A}}=\rra{\mathcal{D}}$, and calculation based on Theorem \ref{Chai1Inverse1Trank} yields the inner inverse of $\mathcal{D}=\mathcal{A}+\mathcal{E}$ as follows:
\begin{equation}\label{Proof2.3}
\mathcal{D}^{(1)}=\mathcal{A}^{(1)}*_s \bigrho.
\end{equation}
By conditions $1$,$2$ and \eqref{0.1}, substitution of $\mathcal{D}^{(1)}$ from \eqref{Proof2.3} into $(\mathcal{C}*_s \mathcal{D})^{(1)}$ yields
\begin{equation}\label{Proof2.4}
(\mathcal{C}*_s \mathcal{D} )^{(1)}=  \mathcal{D}^{(1)}*_s \mathcal{C}^{(1)}.
\end{equation}
Thus,
\begin{equation}\label{Proof2.5}
\mathcal{D}^{(2)}_{*,\aunclfamily{\bf N}(\mathcal{C})}= ( \mathcal{D}^{(1)}*_s \mathcal{C}^{(1)} )*_u  \mathcal{C} = \mathcal{A}^{(1)}*_s \bigrho *_s \mathcal{C}^{(1)}*_v \mathcal{C}.
\end{equation}
From \eqref{0.32} and \eqref{Proof2.5}, $\mathcal{A}^{(2)}_{*,\aunclfamily{\bf N}(\mathcal{C})}= (\mathcal{C}*_s \mathcal{A})^{(1)} *_v \mathcal{C}$, then:

\begin{equation}\label{Proof2.6}
\mathcal{D}^{(2)}_{*,\aunclfamily{\bf N}(\mathcal{C})}-\mathcal{A}^{(2)}_{*,\aunclfamily{\bf N}(\mathcal{C})}=
\mathcal{A}^{(1)}*_s \bigrho *_s \mathcal{C}^{(1)}*_v \mathcal{C} -  (\mathcal{C}*_s \mathcal{A} )^{(1)} *_v \mathcal{C}.
\end{equation}
Applying the norm to both sides of equation \eqref{Proof2.6} yields the following results:
\begin{equation}\label{Proof2.7}
\Vert \mathcal{D}^{(2)}_{*,\aunclfamily{\bf N}(\mathcal{C})}-\mathcal{A}^{(2)}_{*,\aunclfamily{\bf N}(\mathcal{C})} \Vert \leq \Vert \mathcal{A}^{(1)}\Vert \Vert \bigrho \Vert \Vert \mathcal{C}^{(1)}*_v \mathcal{C}\Vert + \Vert(\mathcal{C}*_s \mathcal{A})^{(1)} *_v \mathcal{C} \Vert.
\end{equation}
An application of \eqref{0.4} leads to
\begin{equation}\label{Proof2.8}
\Vert \bigrho\Vert \leq \frac{1}{1-\Vert \mathcal{E}*_t\mathcal{A}^{(1)}\Vert}.
\end{equation}
The substitution of \eqref{Proof2.8} in \eqref{Proof2.7} yields
\begin{equation}\label{Proof2.9}
\Vert \mathcal{D}^{(2)}_{*,\aunclfamily{\bf N}(\mathcal{C})}-\mathcal{A}^{(2)}_{*,\aunclfamily{\bf N}(\mathcal{C})} \Vert \leq \frac{ \Vert \mathcal{A}^{(1)} \Vert \Vert \mathcal{C}^{(1)}*_v \mathcal{C} \Vert}{1-\Vert \mathcal{E}*_t\mathcal{A}^{(1)}  \Vert}+\Vert  (\mathcal{C}*_s \mathcal{A} )^{(1)} *_v \mathcal{C} \Vert.
\end{equation}
A normalization by $\Vert \mathcal{A}^{(2)}_{*,\aunclfamily{\bf N}(\mathcal{C})}\Vert $ initiates
\begin{equation*}\label{Proof2.10}
\frac{\Vert \mathcal{D}^{(2)}_{*,\aunclfamily{\bf N}(\mathcal{C})}-\mathcal{A}^{(2)}_{*,\aunclfamily{\bf N}(\mathcal{C})}\Vert}{\Vert \mathcal{A}^{(2)}_{*,\aunclfamily{\bf N}(\mathcal{C})}\Vert}
\leq \frac{\Vert \mathcal{A}^{(1)} \Vert \Vert \mathcal{C}^{(1)}*_v \mathcal{C} \Vert}{(1-\Vert \mathcal{E}*_t\mathcal{A}^{(1)} \Vert) \Vert  (\mathcal{C}*_s \mathcal{A})^{(1)} *_v \mathcal{C} \Vert}+1,
\end{equation*}
which confirms our original intention.
\end{proof}

\begin{example}
Suppose that  $\mathcal{A}\in\mathbb{R}^{(2\times 3)\times (3\times2\times 1)}$ is given with the entries
\begin{center}
$\mathcal{A}(:,:,1,1,1)=\begin{bmatrix} \ 1 & 0 & 0 \ \\ \ 0 & 0 & 0 \  \end{bmatrix}$;
$\mathcal{A}(:,:,2,1,1)=\begin{bmatrix} \ 0 & 0 & 0 \ \\ \ 1 & 0 & 0 \  \end{bmatrix}$;
$\mathcal{A}(:,:,3,1,1)=\begin{bmatrix} \ 1 & 0 & 0 \ \\ \ 0 & 0 & 0 \  \end{bmatrix}$;
$\mathcal{A}(:,:,1,2,1)=\begin{bmatrix} \ 0 & 0 & 0 \ \\ \ 1 & 0 & 0 \  \end{bmatrix}$;
$\mathcal{A}(:,:,2,2,1)=\begin{bmatrix} \ 1 & 0 & 0 \ \\ \ 0 & 0 & 0 \  \end{bmatrix}$;
$\mathcal{A}(:,:,3,2,1)=\begin{bmatrix} \ 0 & 0 & 0 \ \\ \ 1 & 0 & 0 \  \end{bmatrix}$.
\end{center}
and consider $\mathcal{C}\in\mathbb{R}^{(2\times 2)\times (2\times 3)}$ with components
 \begin{center}
$\mathcal{C}(:,:,1,1)=\begin{bmatrix} \ 3 & 3  \ \\ \ 0 & 0  \  \end{bmatrix}$;
$\mathcal{C}(:,:,1,2)=\begin{bmatrix} \ 0 & 0  \ \\ \ 0 & 0  \  \end{bmatrix}$;
$\mathcal{C}(:,:,1,3)=\begin{bmatrix} \ 0 & 0  \ \\ \ 0 & 0  \  \end{bmatrix}$;\\
$\mathcal{C}(:,:,2,1)=\begin{bmatrix} \ 0 & 2  \ \\ \ 2 & 0  \  \end{bmatrix}$;
$\mathcal{C}(:,:,2,2)=\begin{bmatrix} \ 0 & 0  \ \\ \ 0 & 0  \  \end{bmatrix}$;
$\mathcal{C}(:,:,2,3)=\begin{bmatrix} \ 0 & 0  \ \\ \ 0 & 0  \  \end{bmatrix}$.
\end{center}
Let the perturbation $\mathcal{E}\in\mathbb{R}^{(2\times 3)\times (3\times2\times1)}$ be with
\begin{center}
$\mathcal{E}(:,:,1,1,1)=\begin{bmatrix} \ 0 & 0 & 0 \ \\ \ 0 & 0 & 0 \ \end{bmatrix}$;
$\mathcal{E}(:,:,2,1,1)=\begin{bmatrix} \ 0 & 0 & 0 \ \\ \ 0 & 0 & 0 \ \end{bmatrix}$;
$\mathcal{E}(:,:,3,1,1)=\begin{bmatrix} \ 0.5 & 0 & 0 \ \\ \ 0.5 & 0 & 0 \ \end{bmatrix}$;
$\mathcal{E}(:,:,1,2,1)=\begin{bmatrix} \ 0 & 0 & 0 \ \\ \ 0 & 0 & 0 \ \end{bmatrix}$;
$\mathcal{E}(:,:,2,2,1)=\begin{bmatrix} \ 0 & 0 & 0 \ \\ \ 0 & 0 & 0 \ \end{bmatrix}$;
$\mathcal{E}(:,:,3,2,1)=\begin{bmatrix} \ 0 & 0 & 0 \ \\ \ 0 & 0 & 0 \ \end{bmatrix}$.
\end{center}
Next, we will apply Algorithm 4 from \cite{LMA685} to compute $\mathcal{A}^{(1)}$.
The F-norms of $\mathcal{A}^{(1)}$ and $\mathcal{E}$ are equal to
\begin{center}
$\Vert \mathcal{A}^{(1)} \Vert_F=0.8165$,\ \ \ $\Vert \mathcal{E} \Vert_F=0.7071$,\ \ \ $\Vert \mathcal{A}^{(1)}\Vert_F\Vert \mathcal{E} \Vert_F=0.5774<1$.
\end{center}
Then, Algorithm 3 form \cite{LMA685} can be used to verify $\rra{\mathcal{C}*_2\mathcal{D}}=\rra{\mathcal{C}*_2\mathcal{A}}=\rra{\mathcal{C}}=2$ and $\rra{\mathcal{A}}=\rra{\mathcal{D}}=2$.
Indeed, by computations, it follows
$\mathcal{D}^{(2)}_{*,\aunclfamily{\bf N}(\mathcal{C})}\in \mathbb{R}^{ (3\times2\times 1)\times(2\times 3)} $is given by
 \begin{center}
$\mathcal D^{(2)}_{*,\mathcal N(\mathcal C)}(:,:,1,1,1) = \begin{bmatrix} \ 0.2857 & -0.0476 \\ -0.0476 & 0.2857 \\ 0.2857 & -0.0476\ \end{bmatrix}; \quad \mathcal D^{(2)}_{*,\mathcal N(\mathcal C)}(:,:,1,2,1) = \begin{bmatrix} \ 0 & 0.3333 \\ 0.3333 & 0 \\ 0 & 0.3333\ \end{bmatrix};$
     $\mathcal D^{(2)}_{*,\mathcal N(\mathcal C)}(:,:,1,1,2) = \begin{bmatrix} \ 0 & 0 \\ 0 & 0 \\ 0 & 0\ \end{bmatrix}; \quad \mathcal D^{(2)}_{*,\mathcal N(\mathcal C)}(:,:,1,2,2) = \begin{bmatrix} \ 0 & 0 \\ 0 & 0 \\ 0 & 0\ \end{bmatrix};$
     $\mathcal D^{(2)}_{*,\mathcal N(\mathcal C)}(:,:,1,1,3) = \begin{bmatrix} \ 0 & 0 \\ 0 & 0 \\ 0 & 0\ \end{bmatrix}; \quad \mathcal D^{(2)}_{*,\mathcal N(\mathcal C)}(:,:,1,2,3) = \begin{bmatrix} \ 0 & 0 \\ 0 & 0 \\ 0 & 0\ \end{bmatrix}$.

 \end{center}
 and  $\mathcal{A}^{(2)}_{*,\aunclfamily{\bf N}(\mathcal{C})}\in \mathbb{R}^{ (3\times2\times 1)\times(2\times 3)} $is given by
  \begin{center}
$\mathcal A^{(2)}_{*,\mathcal N(\mathcal C)}(:,:,1,1,1) = \begin{bmatrix} \ 0.3333 & 0 \\ 0 & 0.3333 \\ 0.3333 & 0\ \end{bmatrix}; \quad \mathcal A^{(2)}_{*,\mathcal N(\mathcal C)}(:,:,1,2,1) = \begin{bmatrix} \ 0 & 0.3333 \\ 0.3333 & 0 \\ 0 & 0.3333\ \end{bmatrix};$
     $\mathcal A^{(2)}_{*,\mathcal N(\mathcal C)}(:,:,1,1,2) = \begin{bmatrix} \ 0 & 0 \\ 0 & 0 \\ 0 & 0\ \end{bmatrix}; \quad \mathcal A^{(2)}_{*,\mathcal N(\mathcal C)}(:,:,1,2,2) = \begin{bmatrix} \ 0 & 0 \\ 0 & 0 \\ 0 & 0\ \end{bmatrix};$
     $\mathcal A^{(2)}_{*,\mathcal N(\mathcal C)}(:,:,1,1,3) = \begin{bmatrix} \ 0 & 0 \\ 0 & 0 \\ 0 & 0\ \end{bmatrix}; \quad \mathcal A^{(2)}_{*,\mathcal N(\mathcal C)}(:,:,1,2,3) = \begin{bmatrix} \ 0 & 0 \\ 0 & 0 \\ 0 & 0\ \end{bmatrix}$.
 \end{center}
 By calculation, $(\mathcal{C}^{(1)}*_2\mathcal{C}*_2\mathcal{D}*_3\mathcal{D}^{(1)})$  is idempotent.
The necessary conditions have been met, allowing for the calculations to proceed as follows:
 \begin{center}
 $\Vert \mathcal{D}^{(2)}_{*,\aunclfamily{\bf N}(\mathcal{C})}-\mathcal{A}^{(2)}_{*,\aunclfamily{\bf N}(\mathcal{C})}\Vert_F=0.1116$, \ \ \
$\Vert \mathcal{E}*_3\mathcal{A}^{(1)} \Vert_F=0.2357$, \ \ \
 $\Vert \mathcal{C}^{(1)}*_2 \mathcal{C} \Vert_F=1.4142$, \\
$\Vert \mathcal{A}^{(2)}_{*,\aunclfamily{\bf N}(\mathcal{C})}\Vert_F=\Vert  (\mathcal{C}*_2 \mathcal{A})^{(1)} *_2 \mathcal{C} \Vert_F=0.8165$.
 \end{center}
In this situation, normalization results in
$$\frac{\Vert \mathcal{D}^{(2)}_{*,\aunclfamily{\bf N}(\mathcal{C})}-\mathcal{A}^{(2)}_{*,\aunclfamily{\bf N}(\mathcal{C})}\Vert_F}{\Vert \mathcal{A}^{(2)}_{*,\aunclfamily{\bf N}(\mathcal{C})}\Vert_F}=0.1429$$
$$\frac{\Vert \mathcal{A}^{(1)} \Vert_F \Vert \mathcal{C}^{(1)}*_v \mathcal{C} \Vert_F}{(1-\Vert \mathcal{E}*_t\mathcal{A}^{(1)} \Vert_F) \Vert  (\mathcal{C}*_2 \mathcal{A})^{(1)} *_2 \mathcal{C} \Vert_F}+1=2.8503.$$
As a result, the inequality presented in Theorem \ref{TheoremCInverse} is valid.
\end{example}

\begin{corollary}\label{Corollary2.1}
According to the condition of Theorem \ref{TheoremCInverse}, if $(\mathcal{C}*_s \mathcal{A})^{(1)}  = \mathcal{A}^{(1)}*_s \mathcal{C}^{(1)} $, the following inequality holds \\
$$\frac{\Vert \mathcal{D}^{(2)}_{*,\aunclfamily{\bf N}(\mathcal{C})}-\mathcal{A}^{(2)}_{*,\aunclfamily{\bf N}(\mathcal{C})}\Vert}{\Vert \mathcal{A}^{(2)}_{*,\aunclfamily{\bf N}(\mathcal{C})}\Vert} \leq \frac {1}{1-\Vert \mathcal{E}*_t\mathcal{A}^{(1)}\Vert}+1.$$
\end{corollary}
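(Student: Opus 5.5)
Under the hypothesis $(\mathcal{C}*_s \mathcal{A})^{(1)} = \mathcal{A}^{(1)}*_s \mathcal{C}^{(1)}$, I will specialize the bound \eqref{The3.3} of Theorem \ref{TheoremCInverse} by comparing the two products of norms that appear in it. The plan is not to redo the perturbation argument; all hypotheses of Theorem \ref{TheoremCInverse} remain in force. By Lemma \ref{CInverseEquation} (condition 2 gives $\rra{\mathcal{C}*_s\mathcal{A}}=\rra{\mathcal{C}}$) we have $\mathcal{A}^{(2)}_{*,\aunclfamily{\bf N}(\mathcal{C})}=(\mathcal{C}*_s\mathcal{A})^{(1)}*_v\mathcal{C}$. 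With the extra assumption this becomes
\[
\mathcal{A}^{(2)}_{*,\aunclfamily{\bf N}(\mathcal{C})}=\mathcal{A}^{(1)}*_s\mathcal{C}^{(1)}*_v\mathcal{C}.
\]

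Next I would redo the estimate \eqref{Proof2.7} with this factorization in hand. By \eqref{Proof2.6},
\[
\mathcal{D}^{(2)}_{*,\aunclfamily{\bf N}(\mathcal{C})}-\mathcal{A}^{(2)}_{*,\aunclfamily{\bf N}(\mathcal{C})}=\mathcal{A}^{(1)}*_s\bigrho*_s\mathcal{C}^{(1)}*_v\mathcal{C}-\mathcal{A}^{(1)}*_s\mathcal{C}^{(1)}*_v\mathcal{C}.
\]
The triangle inequality together with \eqref{Proof2.8} bounds the first term by $\Vert\mathcal{A}^{(1)}\Vert\,\Vert\mathcal{C}^{(1)}*_v\mathcal{C}\Vert/(1-\Vert\mathcal{E}*_t\mathcal{A}^{(1)}\Vert)$, and the second term is exactly $\Vert\mathcal{A}^{(2)}_{*,\aunclfamily{\bf N}(\mathcal{C})}\Vert$. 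Dividing by $\Vert\mathcal{A}^{(2)}_{*,\aunclfamily{\bf N}(\mathcal{C})}\Vert$ therefore reproduces \eqref{The3.3}, with denominator $\Vert\mathcal{A}^{(1)}*_s\mathcal{C}^{(1)}*_v\mathcal{C}\Vert$.

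The remaining, and main, step is to show that the ratio $\Vert\mathcal{A}^{(1)}\Vert\,\Vert\mathcal{C}^{(1)}*_v\mathcal{C}\Vert/\Vert\mathcal{A}^{(1)}*_s\mathcal{C}^{(1)}*_v\mathcal{C}\Vert$ may be replaced by $1$. Submultiplicativity of the consistent norm only gives the inequality in the wrong direction, so this needs a genuine argument. The obstacle is to choose the inner inverses so that the ratio collapses. The parallel Corollary for the $(\mathcal{B})$-inverse shows the authors treat $\mathcal{A}^{(1)}$ and $\mathcal{C}^{(1)}*_v\mathcal{C}$ as acting compatibly, so I would try the following route. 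Since the inner inverses are free, pick $\mathcal{C}^{(1)}$ so that $\mathcal{C}^{(1)}*_v\mathcal{C}$ is the projector onto the range on which $\mathcal{A}^{(1)}$ acts nontrivially. Then $\mathcal{A}^{(1)}*_s\mathcal{C}^{(1)}*_v\mathcal{C}$ coincides with $\mathcal{A}^{(1)}$ restricted accordingly. In the spectral norm, taking the Moore--Penrose choice makes $\Vert\mathcal{C}^{(1)}*_v\mathcal{C}\Vert_2=1$, so the ratio reduces to $\Vert\mathcal{A}^{(1)}\Vert/\Vert\mathcal{A}^{(2)}_{*,\aunclfamily{\bf N}(\mathcal{C})}\Vert$.

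If equality of these two norms cannot be justified in general, I would instead bound the first term directly by writing $\mathcal{A}^{(1)}*_s\bigrho*_s\mathcal{C}^{(1)}*_v\mathcal{C}$ in terms of $\mathcal{A}^{(2)}_{*,\aunclfamily{\bf N}(\mathcal{C})}$. This might go through the relation $\bigrho*_s\mathcal{A}*_t\mathcal{A}^{(1)}=\mathcal{A}*_t\bigdelta*_t\mathcal{A}^{(1)}$, in the spirit of Theorem \ref{0.3B}, to get the first term bounded by $\Vert\mathcal{A}^{(2)}_{*,\aunclfamily{\bf N}(\mathcal{C})}\Vert/(1-\Vert\mathcal{E}*_t\mathcal{A}^{(1)}\Vert)$. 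Once either route succeeds, adding $1$ for the second term gives the stated bound $\frac{1}{1-\Vert\mathcal{E}*_t\mathcal{A}^{(1)}\Vert}+1$.
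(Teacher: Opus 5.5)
\textbf{There is a genuine gap: neither of your two routes can be completed.} Your first step is right. By Lemma \ref{CInverseEquation} and the extra hypothesis, the $(\mathcal{C})$-inverse of $\mathcal{A}$ is $\mathcal{X}_{\mathcal{A}}:=\mathcal{A}^{(1)}*_s\mathcal{C}^{(1)}*_v\mathcal{C}$. By \eqref{The2.1}, that of $\mathcal{D}$ is $\mathcal{X}_{\mathcal{D}}:=\mathcal{A}^{(1)}*_s\bigrho*_s\mathcal{C}^{(1)}*_v\mathcal{C}$. You are also right that submultiplicativity does not let the factor in \eqref{The3.3} collapse to $1$. The paper states the corollary without proof, and that collapse appears to be the intended argument.

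However, each of your routes aims at an intermediate inequality that is false. Take matrices ($s=t=v=1$, all Einstein products ordinary matrix products), with $M=10$ and $\varepsilon=0.09$:
\[
\mathcal{A}=\mathrm{diag}(1/M,1),\quad \mathcal{E}=\begin{bmatrix}0&\varepsilon\\0&0\end{bmatrix},\quad \mathcal{C}=\mathcal{C}^{(1)}=\mathrm{diag}(0,1).
\]
Here $\mathcal{A}^{(1)}=\mathrm{diag}(M,1)$ is forced. Every hypothesis of Theorem \ref{TheoremCInverse} and of the corollary holds:
\begin{itemize}
\item $\mathcal{D}$ is invertible and $\mathcal{A}^{(1)}*_s\bigrho=\mathcal{D}^{-1}$.
\item $\Vert\mathcal{A}^{(1)}\Vert_F\Vert\mathcal{E}\Vert_F\approx0.905<1$.
\item $\mathcal{A},\mathcal{D}$ have rank $2$, and $\mathcal{C}\mathcal{A},\mathcal{C}\mathcal{D},\mathcal{C}$ have rank $1$.
\item Condition 1 reduces to idempotency of $\mathcal{C}^{(1)}\mathcal{C}$, which holds.
\item $\mathcal{A}^{(1)}\mathcal{C}^{(1)}=\mathrm{diag}(0,1)$ is an inner inverse of $\mathcal{C}\mathcal{A}=\mathrm{diag}(0,1)$.
\end{itemize}
Then $\mathcal{X}_{\mathcal{A}}=\mathrm{diag}(0,1)$, $\mathcal{X}_{\mathcal{D}}=\begin{bmatrix}0&-M\varepsilon\\0&1\end{bmatrix}$, and $\Vert\mathcal{E}\mathcal{A}^{(1)}\Vert=\varepsilon$.

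\emph{Route (a).} The ratio is $\Vert\mathcal{A}^{(1)}\Vert_2\Vert\mathcal{C}^{(1)}\mathcal{C}\Vert_2/\Vert\mathcal{X}_{\mathcal{A}}\Vert_2=M=10$. This happens even though $\mathcal{A}^{(1)}$ is unique and $\mathcal{C}^{(1)}$ is already the Moore--Penrose inverse. In any case you may not re-choose the inner inverses, since the hypotheses and $\bigrho$ are built from the given ones.

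\emph{Route (b).} In both the spectral and Frobenius norms, $\Vert\mathcal{X}_{\mathcal{D}}\Vert=\sqrt{1+M^2\varepsilon^2}\approx1.345$, which exceeds $1/(1-\varepsilon)\approx1.099$. Also, $\bigrho*_s\mathcal{A}*_t\mathcal{A}^{(1)}=\mathcal{A}*_t\bigdelta*_t\mathcal{A}^{(1)}$ is not an identity: it asserts that $\bigrho$ commutes with $\mathcal{A}*_t\mathcal{A}^{(1)}$. The valid relation is $\mathcal{A}^{(1)}*_s\bigrho=\bigdelta*_t\mathcal{A}^{(1)}$. That only yields a bound involving $\Vert\mathcal{A}^{(1)}*_s\mathcal{E}\Vert$ rather than $\Vert\mathcal{E}*_t\mathcal{A}^{(1)}\Vert$.

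\textbf{The missing idea:} do not split the difference with the triangle inequality; factor it instead. Since $\bigrho-\mathcal{I}=-\bigrho*_s\mathcal{E}*_t\mathcal{A}^{(1)}$,
\[
\mathcal{X}_{\mathcal{D}}-\mathcal{X}_{\mathcal{A}}=\mathcal{A}^{(1)}*_s(\bigrho-\mathcal{I})*_s\mathcal{C}^{(1)}*_v\mathcal{C}=-\mathcal{A}^{(1)}*_s\bigrho*_s\mathcal{E}*_t\mathcal{X}_{\mathcal{A}}.
\]
Lemma \ref{le-inv}, as in \eqref{Proof2.8}, then gives
\[
\frac{\Vert\mathcal{X}_{\mathcal{D}}-\mathcal{X}_{\mathcal{A}}\Vert}{\Vert\mathcal{X}_{\mathcal{A}}\Vert}\le\frac{\Vert\mathcal{A}^{(1)}\Vert\,\Vert\mathcal{E}\Vert}{1-\Vert\mathcal{E}*_t\mathcal{A}^{(1)}\Vert}<\frac{1}{1-\Vert\mathcal{E}*_t\mathcal{A}^{(1)}\Vert}.
\]
This is stronger than the stated bound. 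The extra hypothesis is used only to recognize $\mathcal{A}^{(1)}*_s\mathcal{C}^{(1)}*_v\mathcal{C}$ as $\mathcal{X}_{\mathcal{A}}$, so that this factor can be peeled off on the right. In the example above (spectral norm) this reads $0.9\le0.989$.
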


\begin{theorem}\label{TheoremBCInverse}
Let $\mathcal{A}, \mathcal{E}\in\mathbb{C}^{S(s)\times T(t)}$, $\mathcal{B}\in\mathbb{C}^{T(t)\times U(u)}$, $\mathcal{C}\in\mathbb{C}^{V(v)\times S(s)}$ and $\mathcal{A}^{(2)}_{\aunclfamily{\bf R}(\mathcal{B}),\aunclfamily{\bf N}(\mathcal{C})}$ be the $(\mathcal{B},\mathcal{C})$-inverse of $\mathcal{A}$.
If the subsequent conditions are satisfied for $\mathcal{D}=\mathcal{A}+\mathcal{E}$:\\
1. $((\mathcal{C}*_s\mathcal{D})^{(1)}*_v\mathcal{C}*_s\mathcal{D}*_t\mathcal{B}*_u \mathcal{B}^{(1)})^2=(\mathcal{C}*_s\mathcal{D})^{(1)}*_v\mathcal{C}*_s\mathcal{D}*_t\mathcal{B}*_u \mathcal{B}^{(1)}$; \\
2. $(\mathcal{C}^{(1)}*_v\mathcal{C}*_s\mathcal{D}*_t\mathcal{D}^{(1)})^2=\mathcal{C}^{(1)}*_v\mathcal{C}*_s\mathcal{D}*_t\mathcal{D}^{(1)}$;\\
3. $\rra{\mathcal{C}*_s\mathcal{A}*_t\mathcal{B}}=\rra{\mathcal{C}*_s\mathcal{D}*_t\mathcal{B}}=\rra{\mathcal{C}}=\rra{\mathcal{B}}$; \\
4. $\Vert \mathcal{A}^{(1)}\Vert \Vert \mathcal{E} \Vert <1 \ \  and \ \ \rra{\mathcal{A}}=\rra{\mathcal{D}},$\\
then it follows\\
\begin{equation}\label{The3.1}
\mathcal{D}^{(2)}_{\aunclfamily{\bf R}(\mathcal{B}),\aunclfamily{\bf N}(\mathcal{C})}=\mathcal{B}*_u \mathcal{B}^{(1)}*_t\mathcal{A}^{(1)}*_s \bigrho *_s\mathcal{C}^{(1)}*_v\mathcal{C},
\end{equation}
\begin{equation}\label{The3.30}
\frac{\Vert \mathcal{D}^{(2)}_{\aunclfamily{\bf R}(\mathcal{B}),\aunclfamily{\bf N}(\mathcal{C})}-\mathcal{A}^{(2)}_{\aunclfamily{\bf R}(\mathcal{B}),\aunclfamily{\bf N}(\mathcal{C})}\Vert}{\Vert \mathcal{A}^{(2)}_{\aunclfamily{\bf R}(\mathcal{B}),\aunclfamily{\bf N}(\mathcal{C})}\Vert} \leq
\frac{\Vert \mathcal{B}*_u  \mathcal{B}^{(1)}\Vert \Vert \mathcal{A}^{(1)} \Vert \Vert \mathcal{C}^{(1)}*_v \mathcal{C} \Vert}{(1-\Vert \mathcal{E}*_t\mathcal{A}^{(1)} \mathcal{E} \Vert) \Vert \mathcal{B}*_u  (\mathcal{C}*_s \mathcal{A}*_t \mathcal{B})^{(1)} *_v \mathcal{C} \Vert}+1.
\end{equation}
\end{theorem}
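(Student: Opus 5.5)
The plan is to combine the proofs of Theorem \ref{TheoremBInverse} and Theorem \ref{TheoremCInverse}, applying the reverse order law of Lemma \ref{ABLeqB1A1} twice. First, condition 3 together with Lemma \ref{BCInverseEquation}, applied once to $\mathcal{A}$ and once to $\mathcal{D}$, gives
\[
\mathcal{A}^{(2)}_{\aunclfamily{\bf R}(\mathcal{B}),\aunclfamily{\bf N}(\mathcal{C})}=\mathcal{B}*_u(\mathcal{C}*_s\mathcal{A}*_t\mathcal{B})^{(1)}*_v\mathcal{C},\qquad
\mathcal{D}^{(2)}_{\aunclfamily{\bf R}(\mathcal{B}),\aunclfamily{\bf N}(\mathcal{C})}=\mathcal{B}*_u(\mathcal{C}*_s\mathcal{D}*_t\mathcal{B})^{(1)}*_v\mathcal{C}.
\]
Condition 4 and Lemma \ref{le-inv} make $\bigrho$ well defined. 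Theorem \ref{Chai1Inverse1Trank}, used with $\rra{\mathcal{A}}=\rra{\mathcal{D}}$, then shows that $\mathcal{D}^{(1)}=\mathcal{A}^{(1)}*_s\bigrho$ is an inner inverse of $\mathcal{D}$.

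Next I factor $\mathcal{C}*_s\mathcal{D}*_t\mathcal{B}=(\mathcal{C}*_s\mathcal{D})*_t\mathcal{B}$. I apply Lemma \ref{ABLeqB1A1} with $\mathcal{P}=\mathcal{C}*_s\mathcal{D}$ and $\mathcal{Q}=\mathcal{B}$. Condition 1 is exactly the idempotency hypothesis \eqref{0.1}, so $(\mathcal{C}*_s\mathcal{D}*_t\mathcal{B})^{(1)}=\mathcal{B}^{(1)}*_t(\mathcal{C}*_s\mathcal{D})^{(1)}$. Then I apply Lemma \ref{ABLeqB1A1} again with $\mathcal{P}=\mathcal{C}$ and $\mathcal{Q}=\mathcal{D}$. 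Condition 2 is the corresponding hypothesis, which yields $(\mathcal{C}*_s\mathcal{D})^{(1)}=\mathcal{D}^{(1)}*_s\mathcal{C}^{(1)}=\mathcal{A}^{(1)}*_s\bigrho*_s\mathcal{C}^{(1)}$. Substituting both into the representation of $\mathcal{D}^{(2)}_{\aunclfamily{\bf R}(\mathcal{B}),\aunclfamily{\bf N}(\mathcal{C})}$ gives \eqref{The3.1}.

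\textbf{Main obstacle.} The delicate point is the meaning of the inner inverses. Lemma \ref{ABLeqB1A1} should be read with consistent choices: the $\mathcal{D}^{(1)}$ appearing in condition 2 is the particular inner inverse $\mathcal{A}^{(1)}*_s\bigrho$, and the $(\mathcal{C}*_s\mathcal{D})^{(1)}$ in condition 1 is the one produced by the second application. The representation in Lemma \ref{BCInverseEquation} does not depend on which inner inverse of $\mathcal{C}*_s\mathcal{D}*_t\mathcal{B}$ is used, so this choice is harmless, exactly as in the earlier theorems.

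For the bound, I subtract the two representations and use the triangle inequality together with submultiplicativity of the consistent norm:
\[
\Vert\mathcal{D}^{(2)}_{\aunclfamily{\bf R}(\mathcal{B}),\aunclfamily{\bf N}(\mathcal{C})}-\mathcal{A}^{(2)}_{\aunclfamily{\bf R}(\mathcal{B}),\aunclfamily{\bf N}(\mathcal{C})}\Vert\le \Vert\mathcal{B}*_u\mathcal{B}^{(1)}\Vert\,\Vert\mathcal{A}^{(1)}\Vert\,\Vert\bigrho\Vert\,\Vert\mathcal{C}^{(1)}*_v\mathcal{C}\Vert+\Vert\mathcal{B}*_u(\mathcal{C}*_s\mathcal{A}*_t\mathcal{B})^{(1)}*_v\mathcal{C}\Vert .
\]
Lemma \ref{le-inv} gives $\Vert\bigrho\Vert\le 1/(1-\Vert\mathcal{E}*_t\mathcal{A}^{(1)}\Vert)$, which is valid because $\Vert\mathcal{E}*_t\mathcal{A}^{(1)}\Vert\le\Vert\mathcal{E}\Vert\Vert\mathcal{A}^{(1)}\Vert<1$. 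Dividing by $\Vert\mathcal{A}^{(2)}_{\aunclfamily{\bf R}(\mathcal{B}),\aunclfamily{\bf N}(\mathcal{C})}\Vert$, which equals the last norm on the right, gives \eqref{The3.30}. Here I read the factor $\Vert\mathcal{E}*_t\mathcal{A}^{(1)}\mathcal{E}\Vert$ in the denominator of \eqref{The3.30} as a typo for $\Vert\mathcal{E}*_t\mathcal{A}^{(1)}\Vert$.
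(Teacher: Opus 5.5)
Your proposal is correct and follows the paper's proof step for step. You use the representation from Lemma \ref{BCInverseEquation} under condition 3, obtain $\mathcal{D}^{(1)}=\mathcal{A}^{(1)}*_s\bigrho$ from Theorem \ref{Chai1Inverse1Trank}, apply the reverse order law of Lemma \ref{ABLeqB1A1} through conditions 1 and 2 to get $(\mathcal{C}*_s\mathcal{D}*_t\mathcal{B})^{(1)}=\mathcal{B}^{(1)}*_t\mathcal{D}^{(1)}*_s\mathcal{C}^{(1)}$, and finish with the triangle inequality and $\Vert\bigrho\Vert\le 1/(1-\Vert\mathcal{E}*_t\mathcal{A}^{(1)}\Vert)$; your remarks on consistent choices of inner inverses, on the invariance of $\mathcal{B}*_u(\mathcal{C}*_s\mathcal{D}*_t\mathcal{B})^{(1)}*_v\mathcal{C}$ under condition 3, and on reading $\Vert\mathcal{E}*_t\mathcal{A}^{(1)}\mathcal{E}\Vert$ as a typo (the paper's own proof uses $\Vert\mathcal{E}*_t\mathcal{A}^{(1)}\Vert$) make explicit what the paper leaves implicit.
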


\begin{proof}
By \eqref{0.33}, the $(\mathcal{B},\mathcal{C})$-inverse of $\mathcal{D}$ is equal to
\begin{equation}\label{Proof3.1}
\mathcal{D}^{(2)}_{\aunclfamily{\bf R}(\mathcal{B}),\aunclfamily{\bf N}(\mathcal{C})}=\mathcal{B}*_u  (\mathcal{C}*_s \mathcal{D}*_t \mathcal{B} )^{(1)} *_v \mathcal{C}.
\end{equation}
Utilizing $\mathcal{D}=\mathcal{A}+\mathcal{E}$, it can be concluded
\begin{equation}\label{Proof3.2}
\mathcal{C}*_s \mathcal{D}*_t \mathcal{B} =\mathcal{C}*_s (\mathcal{A}+\mathcal{E}) *_t\mathcal{B}.
\end{equation}
Under the condition $\Vert \mathcal{A}^{(1)} \Vert \Vert \mathcal{E} \Vert <1 $, the inequality \eqref{0.4} guarantees invertibility of $\mathcal{I}+\mathcal{E}*_t\mathcal{A}^{(1)}$.
Next $\rra{\mathcal{A}}=\rra{\mathcal{D}}$, and using Theorem \ref{Chai1Inverse1Trank} the inner inverse of $\mathcal{D}=\mathcal{A}+\mathcal{E}$ is calculated as

\begin{equation}\label{Proof3.3}
\mathcal{D}^{(1)}=\mathcal{A}^{(1)}*_s \bigrho.
\end{equation}
By conditions 1,2 and \eqref{0.1}, substitute $\mathcal{D}^{(1)}$ into $(\mathcal{C}*_s \mathcal{D}*_t \mathcal{B} )^{(1)}$:
\begin{equation}\label{Proof3.4}
(\mathcal{C}*_s \mathcal{D}*_t \mathcal{B} )^{(1)}= \mathcal{B}^{(1)}*_t \mathcal{D}^{(1)}*_s \mathcal{C}^{(1)}.
\end{equation}
Thus,
\begin{equation}\label{Proof3.5}
\mathcal{D}^{(2)}_{\aunclfamily{\bf R}(\mathcal{B}),\aunclfamily{\bf N}(\mathcal{C})}= \mathcal{B}*_u  (\mathcal{B}^{(1)}*_t \mathcal{D}^{(1)}*_s \mathcal{C}^{(1)} )*_u  \mathcal{C} = \mathcal{B}*_u  \mathcal{B}^{(1)}*_t \mathcal{A}^{(1)}*_s \bigrho *_s \mathcal{C}^{(1)}*_v \mathcal{C}.
\end{equation}
From \eqref{0.33} and \eqref{Proof3.5}, $\mathcal{A}^{(2)}_{\aunclfamily{\bf R}(\mathcal{B}),\aunclfamily{\bf N}(\mathcal{C})}=\mathcal{B}*_u  (\mathcal{C}*_s \mathcal{A}*_t \mathcal{B})^{(1)} *_v \mathcal{C} $, and further

\begin{equation}\label{Proof3.6}
\mathcal{D}^{(2)}_{\aunclfamily{\bf R}(\mathcal{B}),\aunclfamily{\bf N}(\mathcal{C})}-\mathcal{A}^{(2)}_{\aunclfamily{\bf R}(\mathcal{B}),\aunclfamily{\bf N}(\mathcal{C})}=
\mathcal{B}*_u  \mathcal{B}^{(1)}*_t \mathcal{A}^{(1)}*_s \bigrho *_s \mathcal{C}^{(1)}*_v \mathcal{C} - \mathcal{B}*_u  (\mathcal{C}*_s \mathcal{A}*_t \mathcal{B} )^{(1)} *_v \mathcal{C}.
\end{equation}
An application of the norm to the expressions in \eqref{Proof3.6} yields
\begin{equation}\label{Proof3.7}
\Vert \mathcal{D}^{(2)}_{\aunclfamily{\bf R}(\mathcal{B}),\aunclfamily{\bf N}(\mathcal{C})}-\mathcal{A}^{(2)}_{\aunclfamily{\bf R}(\mathcal{B}),\aunclfamily{\bf N}(\mathcal{C})} \Vert \leq
\Vert \mathcal{B}*_u  \mathcal{B}^{(1)} \Vert \Vert \mathcal{A}^{(1)}\Vert \Vert \bigrho \Vert \Vert \mathcal{C}^{(1)}*_v \mathcal{C}\Vert + \Vert \mathcal{B}*_u  (\mathcal{C}*_s \mathcal{A}*_t \mathcal{B} )^{(1)} *_v \mathcal{C} \Vert.
\end{equation}
Further, \eqref{0.4} leads to
\begin{equation}\label{Proof3.8}
\Vert \bigrho\Vert \leq \frac{1}{1-\Vert \mathcal{E}*_t\mathcal{A}^{(1)}\Vert}.
\end{equation}
An application of \eqref{Proof3.8} into \eqref{Proof3.7} produces
\begin{equation}\label{Proof3.9}
\Vert \mathcal{D}^{(2)}_{\aunclfamily{\bf R}(\mathcal{B}),\aunclfamily{\bf N}(\mathcal{C})}-\mathcal{A}^{(2)}_{\aunclfamily{\bf R}(\mathcal{B}),\aunclfamily{\bf N}(\mathcal{C})} \Vert \leq \frac{\Vert \mathcal{B}*_u  \mathcal{B}^{(1)} \Vert \Vert \mathcal{A}^{(1)} \Vert \Vert \mathcal{C}^{(1)}*_v \mathcal{C} \Vert}{1-\Vert \mathcal{E}*_t\mathcal{A}^{(1)} \Vert}+\Vert \mathcal{B}*_u  (\mathcal{C}*_s \mathcal{A}*_t \mathcal{B} )^{(1)} *_v \mathcal{C} \Vert.
\end{equation}
The normalization of \eqref{Proof3.9} by $\Vert \mathcal{A}^{(2)}_{\aunclfamily{\bf R}(\mathcal{B}),\aunclfamily{\bf N}(\mathcal{C})}\Vert $ leads to
\begin{equation}\label{Proof3.10}
\frac{\Vert \mathcal{D}^{(2)}_{\aunclfamily{\bf R}(\mathcal{B}),\aunclfamily{\bf N}(\mathcal{C})}-\mathcal{A}^{(2)}_{\aunclfamily{\bf R}(\mathcal{B}),\aunclfamily{\bf N}(\mathcal{C})}\Vert}{\Vert \mathcal{A}^{(2)}_{\aunclfamily{\bf R}(\mathcal{B}),\aunclfamily{\bf N}(\mathcal{C})}\Vert}
\leq \frac{\Vert \mathcal{B}*_u  \mathcal{B}^{(1)}\Vert \Vert \mathcal{A}^{(1)} \Vert \Vert \mathcal{C}^{(1)}*_v \mathcal{C} \Vert}{(1-\Vert  \mathcal{E} *_t\mathcal{A}^{(1)}\Vert) \Vert \mathcal{B}*_u  (\mathcal{C}*_s \mathcal{A}*_t \mathcal{B})^{(1)} *_v \mathcal{C} \Vert}+1,
\end{equation}
which completes the proof.
\end{proof}

\begin{example}
Let $\mathcal{A}\in\mathbb{R}^{(2\times 2)\times (2\times 2)}$ be with elements
\begin{center}
$\mathcal{A}(:,:,1,1)=\begin{bmatrix} \ 1 & 0  \ \\ \ 0 & 0 \  \end{bmatrix}$;
$\mathcal{A}(:,:,1,2)=\begin{bmatrix} \ 0 & 1  \ \\ \ 1 & 0 \  \end{bmatrix}$;
$\mathcal{A}(:,:,2,1)=\begin{bmatrix} \ 1 & 0  \ \\ \ 1 & 0 \  \end{bmatrix}$;
$\mathcal{A}(:,:,2,2)=\begin{bmatrix} \ 0 & 0  \ \\ \ 0 & 0 \  \end{bmatrix}$.
\end{center}
Further, choose $\mathcal{B}\in\mathbb{R}^{(2\times 2)\times (2\times 2)}$ with components
 \begin{center}
$\mathcal{B}(:,:,1,1)=\begin{bmatrix} \ 3 & 0  \ \\ \ 0 & 0 \ \end{bmatrix}$;
$\mathcal{B}(:,:,1,2)=\begin{bmatrix} \ 0 & 1  \ \\ \ 1 & 0 \  \end{bmatrix}$;
$\mathcal{B}(:,:,2,1)=\begin{bmatrix} \ 1 & 0  \ \\ \ 1 & 0 \  \end{bmatrix}$;
$\mathcal{B}(:,:,2,2)=\begin{bmatrix} \ 0 & 0  \ \\ \ 0 & 0 \ \end{bmatrix}$,
\end{center}
as well as $\mathcal{C}\in\mathbb{R}^{(2\times 2)\times (2\times 2)}$ determined by
 \begin{center}
$\mathcal{C}(:,:,1,1)=\begin{bmatrix} \ 1 & 0  \ \\ \ 1 & 0 \ \end{bmatrix}$;\
$\mathcal{C}(:,:,1,2)=\begin{bmatrix} \ 0 & 1  \ \\ \ 0 & 0 \  \end{bmatrix}$;\
$\mathcal{C}(:,:,2,1)=\begin{bmatrix} \ 0 & 1  \ \\ \ 1 & 0 \  \end{bmatrix}$;\
$\mathcal{C}(:,:,2,2)=\begin{bmatrix} \ 0 & 0  \ \\ \ 0 & 0 \ \end{bmatrix}$.
\end{center}
Consider perturbation tensor $\mathcal{E}\in\mathbb{R}^{(2\times 2)\times (2\times 2)}$ defined by
\begin{center}
$\mathcal{E}(:,:,1,1)=\begin{bmatrix} \ 0.2 & 0 \ \\ \ 0 & 0 \ \end{bmatrix}$;\
$\mathcal{E}(:,:,1,2)=\begin{bmatrix} \ 0 & 0.2  \ \\ \ 0 & 0 \  \end{bmatrix}$;\
$\mathcal{E}(:,:,2,1)=\begin{bmatrix} \ 0 & 0  \ \\ \ 0.2 & 0 \  \end{bmatrix}$;\
$\mathcal{E}(:,:,2,2)=\begin{bmatrix} \ 0 & 0  \ \\ \ 0 & 0 \ \end{bmatrix}$.
\end{center}
Next, Algorithm 4 from \cite{LMA685} is applicable in computing $\mathcal{A}^{(1)}$.
The F-norm norms of $\mathcal{A}^{(1)}$ and $\mathcal{E}$ are calculated as

$$\Vert \mathcal{A}^{(1)} \Vert_F=2.4495, \ \ \ \Vert \mathcal{E} \Vert_F=0.3464, \ \ \ \Vert \mathcal{A}^{(1)}\Vert_F\Vert \mathcal{E} \Vert_F=0.8485<1.$$

Then,  Algorithm 3 from \cite{LMA685} can be used to verify $\rra{\mathcal{C}*_2\mathcal{A}*_2 \mathcal{B}}=\rra{\mathcal{C}*_2\mathcal{D}*_2\mathcal{B}}=\rra{\mathcal{B}}=\rra{\mathcal{C}}=3$ and
$\rra{\mathcal{A}}=\rra{\mathcal{D}}=3$. Indeed, after necessary calculations, one can verify the following representation of $ \mathcal{D}^{(2)}_{\aunclfamily{\bf R}(\mathcal{B}),\aunclfamily{\bf N}(\mathcal{C})}\in \mathbb{R}^{(2\times 2)\times(2\times 2)}$:
 \begin{center}
$\mathcal{D}^{(2)}_{\aunclfamily{\bf R}(\mathcal{B}),\aunclfamily{\bf N}(\mathcal{C})}(:,:,1,1)=\begin{bmatrix} \ 0.8333 & 0  \ \\ \ 0 & 0 \ \end{bmatrix}$;\
$\mathcal{D}^{(2)}_{\aunclfamily{\bf R}(\mathcal{B}),\aunclfamily{\bf N}(\mathcal{C})}(:,:,1,2)=\begin{bmatrix} \ 0.5787 & 0.8333  \ \\ \ -0.6944 & 0 \ \end{bmatrix}$;\\
$\mathcal{D}^{(2)}_{\aunclfamily{\bf R}(\mathcal{B}),\aunclfamily{\bf N}(\mathcal{C})}(:,:,2,1)=\begin{bmatrix} \ -0.6944 & 0  \ \\ \ 0.8333 & 0 \ \end{bmatrix}$;
$\mathcal{D}^{(2)}_{\aunclfamily{\bf R}(\mathcal{B}),\aunclfamily{\bf N}(\mathcal{C})}(:,:,2,2)=\begin{bmatrix} \ 0 & 0  \ \\ \ 0 & 0 \ \end{bmatrix}$.
 \end{center}
In addition, $ \mathcal{A}^{(2)}_{\aunclfamily{\bf R}(\mathcal{B}),\aunclfamily{\bf N}(\mathcal{C})}\in \mathbb{R}^{(2\times 2)\times(2\times 2)} $ includes components
  \begin{center}
$\mathcal{A}^{(2)}_{\aunclfamily{\bf R}(\mathcal{B}),\aunclfamily{\bf N}(\mathcal{C})}(:,:,1,1)=\begin{bmatrix} \ 1 & 0  \ \\ \ 0 & 0 \ \end{bmatrix}$;\
$\mathcal{A}^{(2)}_{\aunclfamily{\bf R}(\mathcal{B}),\aunclfamily{\bf N}(\mathcal{C})}(:,:,1,2)=\begin{bmatrix} \ 1 & 1  \ \\ \ -1 & 0 \ \end{bmatrix}$;\\
$\mathcal{A}^{(2)}_{\aunclfamily{\bf R}(\mathcal{B}),\aunclfamily{\bf N}(\mathcal{C})}(:,:,2,1)=\begin{bmatrix} \ -1 & 0  \ \\ \ 1 & 0 \ \end{bmatrix}$;\
$\mathcal{A}^{(2)}_{\aunclfamily{\bf R}(\mathcal{B}),\aunclfamily{\bf N}(\mathcal{C})}(:,:,2,2)=\begin{bmatrix} \ 0 & 0  \ \\ \ 0 & 0 \ \end{bmatrix}$.
 \end{center}
 By calculation, $(\mathcal{C}*_2\mathcal{D})^{(1)}*_2\mathcal{C}*_2\mathcal{D}*_2\mathcal{B}*_2\mathcal{B}^{(1)}$ and $\mathcal{C}^{(1)}*_2\mathcal{C}*_2\mathcal{D}*_2\mathcal{D}^{(1)}$ are idempotent. Conditions are satisfied, and calculation gives
  \begin{center}
 $\Vert \mathcal{D}^{(2)}_{\aunclfamily{\bf R}(\mathcal{B}),\aunclfamily{\bf N}(\mathcal{C})}-\mathcal{A}^{(2)}_{\aunclfamily{\bf R}(\mathcal{B}),\aunclfamily{\bf N}(\mathcal{C})}\Vert_F=0.6690$, \ \ \
  $\Vert \mathcal{C}^{(1)}*_2 \mathcal{C} \Vert_F=1.7321$,\ \ \
 $\Vert \mathcal{B}*_2 \mathcal{B}^{(1)}\Vert_F=1.7321$, \\
 $\Vert \mathcal{E}*_2\mathcal{A}^{(1)} \Vert_F=0.4899$, \ \ \
  $\Vert \mathcal{A}^{(2)}_{\aunclfamily{\bf R}(\mathcal{B}),\aunclfamily{\bf N}(\mathcal{C})}\Vert_F=\Vert \mathcal{B}*_2 (\mathcal{C}*_2 \mathcal{A}*_2 \mathcal{B})^{(1)} *_2 \mathcal{C} \Vert_F=2.4495$.
 \end{center}
Then
$$\frac{\Vert \mathcal{D}^{(2)}_{\aunclfamily{\bf R}(\mathcal{B}),\aunclfamily{\bf N}(\mathcal{C})}-\mathcal{A}^{(2)}_{\aunclfamily{\bf R}(\mathcal{B}),\aunclfamily{\bf N}(\mathcal{C})}\Vert}{\Vert \mathcal{A}^{(2)}_{\aunclfamily{\bf R}(\mathcal{B}),\aunclfamily{\bf N}(\mathcal{C})}\Vert}=0.2731$$
$$\frac{\Vert \mathcal{B}*_2 \mathcal{B}^{(1)}\Vert_F \Vert \mathcal{A}^{(1)} \Vert_F \Vert \mathcal{C}^{(1)}*_2 \mathcal{C} \Vert_F}{(1-\Vert  \mathcal{E} *_2\mathcal{A}^{(1)}\Vert_F) \Vert \mathcal{B}*_2 (\mathcal{C}*_2 \mathcal{A}*_2 \mathcal{B})^{(1)} *_2 \mathcal{C} \Vert_F}+1=6.8812.$$
Therefore, the inequality of Theorem \ref{TheoremBCInverse} holds.
\end{example}

\begin{corollary}
The conditions 1 and 2 in Theorem 2.1 can be changed to \\
 $1$. $(\mathcal{C}^{(1)}*_v \mathcal{C}*_s \mathcal{D}*_t \mathcal{B}*_u  (\mathcal{D}*_t \mathcal{B} )^{(1)}  )^2=\mathcal{C}^{(1)}*_v \mathcal{C}*_s \mathcal{D}*_t \mathcal{B}*_u  (\mathcal{D}*_t \mathcal{B})^{(1)},$ \\
$2$. $(\mathcal{D}^{(1)}*_s \mathcal{D}*_t \mathcal{B}*_u  \mathcal{B}^{(1)} )^2=\mathcal{D}^{(1)}*_s \mathcal{D}*_t \mathcal{B}*_u  \mathcal{B}^{(1)} $\\
 and the conclusion remains unchanged.
\end{corollary}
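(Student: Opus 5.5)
The plan is to rerun the proof of Theorem \ref{TheoremBCInverse} with the order of the two reverse-order-law steps swapped. The new hypotheses are designed to factor $(\mathcal{C}*_s\mathcal{D}*_t\mathcal{B})^{(1)}$ in the opposite grouping. First, condition 3 and Lemma \ref{BCInverseEquation} give
\[
\mathcal{D}^{(2)}_{\aunclfamily{\bf R}(\mathcal{B}),\aunclfamily{\bf N}(\mathcal{C})}=\mathcal{B}*_u(\mathcal{C}*_s\mathcal{D}*_t\mathcal{B})^{(1)}*_v\mathcal{C}.
\]
Condition 4 with Lemma \ref{le-inv} and Theorem \ref{Chai1Inverse1Trank} gives $\mathcal{D}^{(1)}=\mathcal{A}^{(1)}*_s\bigrho$, exactly as before.

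Next I will treat $\mathcal{C}*_s\mathcal{D}*_t\mathcal{B}$ as the product $\mathcal{P}*_s\mathcal{Q}$ with $\mathcal{P}=\mathcal{C}$ and $\mathcal{Q}=\mathcal{D}*_t\mathcal{B}$. The new condition 1 is precisely the idempotency requirement \eqref{0.1} of Lemma \ref{ABLeqB1A1} for this pair. Hence
\[
(\mathcal{C}*_s\mathcal{D}*_t\mathcal{B})^{(1)}=(\mathcal{D}*_t\mathcal{B})^{(1)}*_s\mathcal{C}^{(1)}.
\]
Then I apply Lemma \ref{ABLeqB1A1} again with $\mathcal{P}=\mathcal{D}$ and $\mathcal{Q}=\mathcal{B}$. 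The new condition 2 is the corresponding identity \eqref{0.1} (the same as condition 1 of Theorem \ref{TheoremBInverse}), so $(\mathcal{D}*_t\mathcal{B})^{(1)}=\mathcal{B}^{(1)}*_t\mathcal{D}^{(1)}$. Chaining these gives
\[
(\mathcal{C}*_s\mathcal{D}*_t\mathcal{B})^{(1)}=\mathcal{B}^{(1)}*_t\mathcal{A}^{(1)}*_s\bigrho*_s\mathcal{C}^{(1)}.
\]
This is the same as \eqref{Proof3.4}, so \eqref{The3.1} follows on multiplying by $\mathcal{B}$ on the left and $\mathcal{C}$ on the right.

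From that point the norm estimate is verbatim \eqref{Proof3.6}--\eqref{Proof3.10}. Those steps use only the formula \eqref{The3.1}, the representation \eqref{0.33} of $\mathcal{A}^{(2)}_{\aunclfamily{\bf R}(\mathcal{B}),\aunclfamily{\bf N}(\mathcal{C})}$, submultiplicativity, the triangle inequality, and the bound \eqref{Proof3.8} on $\Vert\bigrho\Vert$.

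The main obstacle is the bookkeeping in the reverse order law. Lemma \ref{ABLeqB1A1} is an "if and only if" statement about the \emph{class} of inner inverses, so the identity must be read with the specific inner inverses that appear in the hypotheses. In condition 1 the inner inverse $(\mathcal{D}*_t\mathcal{B})^{(1)}$ must be the one later expressed as $\mathcal{B}^{(1)}*_t\mathcal{D}^{(1)}$, and $\mathcal{D}^{(1)}$ must be the specific $\mathcal{A}^{(1)}*_s\bigrho$. I will state this choice explicitly, so that the two factorizations compose consistently. The index orders (which Einstein product $*_s,*_t,*_u,*_v$ joins each factor) also need a quick check, mirroring the conventions used in the proof of Theorem \ref{TheoremBCInverse}.
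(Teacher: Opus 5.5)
Your proposal is correct and matches the paper's proof, which likewise uses the new conditions 1 and 2 with Lemma \ref{ABLeqB1A1} (grouping $\mathcal{C}*_s(\mathcal{D}*_t\mathcal{B})$ instead of $(\mathcal{C}*_s\mathcal{D})*_t\mathcal{B}$) to recover \eqref{Proof3.4}, and then repeats the rest of the proof of Theorem \ref{TheoremBCInverse} unchanged. You also state explicitly that the $(\mathcal{D}*_t\mathcal{B})^{(1)}$ in the new condition 1 must be taken to be $\mathcal{B}^{(1)}*_t\mathcal{D}^{(1)}$ with $\mathcal{D}^{(1)}=\mathcal{A}^{(1)}*_s\bigrho$; the paper leaves this point implicit.
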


\begin{proof}
In order to ensure the condition \eqref{Proof3.4}, replace conditions 1,2 in Theorem \ref{TheoremBCInverse} with new conditions  1.2, and follow the subsequent proof as that in Theorem \ref{TheoremBCInverse}.
\end{proof}

\begin{corollary}
Based on the condition of Theorem \ref{TheoremBCInverse}, if $(\mathcal{C}*_s \mathcal{A}*_t \mathcal{B} )^{(1)}  =\mathcal{B}^{(1)}*_t \mathcal{A}^{(1)}*_s \mathcal{C}^{(1)} $, then \\
$$\frac{\Vert \mathcal{D}^{(2)}_{\aunclfamily{\bf R}(\mathcal{B}),\aunclfamily{\bf N}(\mathcal{C})}-\mathcal{A}^{(2)}_{\aunclfamily{\bf R}(\mathcal{B}),\aunclfamily{\bf N}(\mathcal{C})}\Vert}{\Vert \mathcal{A}^{(2)}_{\aunclfamily{\bf R}(\mathcal{B}),\aunclfamily{\bf N}(\mathcal{C})}\Vert} \leq \frac {1}{1-\Vert \mathcal{E}*_t\mathcal{A}^{(1)} \Vert}+1.$$
\end{corollary}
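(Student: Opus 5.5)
The plan is to run the proof of Theorem \ref{TheoremBCInverse} again, stopping at the representation \eqref{Proof3.6}, and then use the extra hypothesis to pull a common left factor and a common right factor out of the difference. By Theorem \ref{TheoremBCInverse} we already have \eqref{The3.1}, and by Lemma \ref{BCInverseEquation} and condition 3 we have
\[
\mathcal{A}^{(2)}_{\aunclfamily{\bf R}(\mathcal{B}),\aunclfamily{\bf N}(\mathcal{C})}=\mathcal{B}*_u(\mathcal{C}*_s\mathcal{A}*_t\mathcal{B})^{(1)}*_v\mathcal{C}.
\]
Substituting the assumption $(\mathcal{C}*_s\mathcal{A}*_t\mathcal{B})^{(1)}=\mathcal{B}^{(1)}*_t\mathcal{A}^{(1)}*_s\mathcal{C}^{(1)}$ turns this into $\mathcal{B}*_u\mathcal{B}^{(1)}*_t\mathcal{A}^{(1)}*_s\mathcal{C}^{(1)}*_v\mathcal{C}$. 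The difference then factors as
\[
\mathcal{D}^{(2)}_{\aunclfamily{\bf R}(\mathcal{B}),\aunclfamily{\bf N}(\mathcal{C})}-\mathcal{A}^{(2)}_{\aunclfamily{\bf R}(\mathcal{B}),\aunclfamily{\bf N}(\mathcal{C})}=\mathcal{B}*_u\mathcal{B}^{(1)}*_t\mathcal{A}^{(1)}*_s(\bigrho-\mathcal{I})*_s\mathcal{C}^{(1)}*_v\mathcal{C}.
\]

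The key step is to estimate the relative error in the same way as Theorem \ref{APlusB1}, but without separating the norms of $\mathcal{B}*_u\mathcal{B}^{(1)}$, $\mathcal{A}^{(1)}$ and $\mathcal{C}^{(1)}*_v\mathcal{C}$. Splitting $\bigrho-\mathcal{I}$ gives the difference as $\mathcal{P}_1-\mathcal{A}^{(2)}_{\aunclfamily{\bf R}(\mathcal{B}),\aunclfamily{\bf N}(\mathcal{C})}$, where
\[
\mathcal{P}_1=\mathcal{B}*_u\mathcal{B}^{(1)}*_t\mathcal{A}^{(1)}*_s\bigrho*_s\mathcal{C}^{(1)}*_v\mathcal{C}.
\]
The triangle inequality bounds the difference by $\Vert\mathcal{P}_1\Vert+\Vert\mathcal{A}^{(2)}_{\aunclfamily{\bf R}(\mathcal{B}),\aunclfamily{\bf N}(\mathcal{C})}\Vert$. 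After dividing by $\Vert\mathcal{A}^{(2)}_{\aunclfamily{\bf R}(\mathcal{B}),\aunclfamily{\bf N}(\mathcal{C})}\Vert$, the second term contributes the $+1$ in the claimed bound. It remains to show
\[
\Vert\mathcal{P}_1\Vert\le \frac{\Vert\mathcal{A}^{(2)}_{\aunclfamily{\bf R}(\mathcal{B}),\aunclfamily{\bf N}(\mathcal{C})}\Vert}{1-\Vert\mathcal{E}*_t\mathcal{A}^{(1)}\Vert}.
\]

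To obtain this, rewrite $\mathcal{P}_1$ with the factor $\bigrho$ at the end rather than in the middle. The natural route is to show that $\mathcal{C}^{(1)}*_v\mathcal{C}$ commutes past $\bigrho$, or to rewrite $\mathcal{A}^{(1)}*_s\bigrho$ as $\bigdelta*_t\mathcal{A}^{(1)}$ (Lemma \ref{Chai1Inverse2T}) and move the factor to the left. If that works, $\mathcal{P}_1$ becomes $\mathcal{A}^{(2)}_{\aunclfamily{\bf R}(\mathcal{B}),\aunclfamily{\bf N}(\mathcal{C})}$ multiplied by an inverse factor. Submultiplicativity of the consistent norm together with \eqref{0.4} then gives the desired estimate, with $\Vert\mathcal{E}*_t\mathcal{A}^{(1)}\Vert\le\Vert\mathcal{A}^{(1)}\Vert\Vert\mathcal{E}\Vert<1$ guaranteeing invertibility.

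This commutation is the main obstacle. Commuting $\mathcal{C}^{(1)}*_v\mathcal{C}$ with $\bigrho$ is not automatic, and relocating $\bigrho$ generally replaces it by a conjugated factor whose norm need not be controlled by $1/(1-\Vert\mathcal{E}*_t\mathcal{A}^{(1)}\Vert)$. The first thing to try is to combine condition 2 of Theorem \ref{TheoremBCInverse} with the reverse order law from Lemma \ref{ABLeqB1A1}, aiming to show that $\mathcal{C}^{(1)}*_v\mathcal{C}$ acts as the identity on $\aunclfamily{\bf R}(\mathcal{D}*_t\mathcal{D}^{(1)})$, so that the projector can be absorbed. If this fails, the fallback is the looser bound $\Vert\mathcal{P}_1\Vert\le\Vert\mathcal{B}*_u\mathcal{B}^{(1)}\Vert\Vert\mathcal{A}^{(1)}\Vert\Vert\bigrho\Vert\Vert\mathcal{C}^{(1)}*_v\mathcal{C}\Vert$. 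Under the extra hypothesis, this yields the stated bound only in the normalised situation where $\Vert\mathcal{A}^{(2)}_{\aunclfamily{\bf R}(\mathcal{B}),\aunclfamily{\bf N}(\mathcal{C})}\Vert=\Vert\mathcal{B}*_u\mathcal{B}^{(1)}\Vert\Vert\mathcal{A}^{(1)}\Vert\Vert\mathcal{C}^{(1)}*_v\mathcal{C}\Vert$, which is the reading implicit in \eqref{The3.30}. In that case the corollary follows directly by substituting into \eqref{The3.30}.
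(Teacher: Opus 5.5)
Your reduction is sound. Write $\mathcal{X}_A,\mathcal{X}_D$ for the $(\mathcal{B},\mathcal{C})$-inverses of $\mathcal{A},\mathcal{D}$. The factorisation $\mathcal{X}_D-\mathcal{X}_A=\mathcal{B}*_u\mathcal{B}^{(1)}*_t\mathcal{A}^{(1)}*_s(\bigrho-\mathcal{I})*_s\mathcal{C}^{(1)}*_v\mathcal{C}$, together with the triangle inequality, reduces everything to $\Vert\mathcal{P}_1\Vert\le\Vert\mathcal{X}_A\Vert/(1-\Vert\mathcal{E}*_t\mathcal{A}^{(1)}\Vert)$. You never prove this inequality. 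Neither commutation is justified, and your fallback simply adds the hypothesis $\Vert\mathcal{X}_A\Vert=\Vert\mathcal{B}*_u\mathcal{B}^{(1)}\Vert\Vert\mathcal{A}^{(1)}\Vert\Vert\mathcal{C}^{(1)}*_v\mathcal{C}\Vert$, which is not in the statement. Submultiplicativity gives only $\le$ here. So the factor multiplying $1/(1-\Vert\mathcal{E}*_t\mathcal{A}^{(1)}\Vert)$ in \eqref{The3.30} is $\ge 1$, and substituting into \eqref{The3.30} gives a weaker bound than the one claimed. The paper states this corollary without proof, and reading it off \eqref{The3.30} needs exactly this cancellation, which goes the wrong way.

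The gap cannot be closed, because the statement is false. Take matrices ($s=t=u=v=1$, ordinary products) with
\[
\mathcal{A}=\mathcal{A}^{(1)}=I_2,\quad \mathcal{E}=\begin{pmatrix}0&0\\-0.09&0\end{pmatrix},\quad \mathcal{B}=(1,0)^T,\quad \mathcal{C}=(0.1,\ 1),\quad \mathcal{B}^{(1)}=(1,\ 100),\quad \mathcal{C}^{(1)}=(10,\ 0)^T.
\]
The hypotheses all hold:
\begin{itemize}
\item $\mathcal{D}$ is invertible and $\mathcal{D}^{(1)}=\mathcal{A}^{(1)}*_s\bigrho=\mathcal{D}^{-1}$.
\item $\mathcal{C}\mathcal{A}\mathcal{B}=0.1$, $\mathcal{C}\mathcal{D}\mathcal{B}=0.01$, and $\mathcal{B}^{(1)}\mathcal{A}^{(1)}\mathcal{C}^{(1)}=10=(\mathcal{C}\mathcal{A}\mathcal{B})^{-1}$, so the extra hypothesis holds.
\item Condition 2 holds because $\mathcal{C}^{(1)}\mathcal{C}\mathcal{D}\mathcal{D}^{(1)}=\mathcal{C}^{(1)}\mathcal{C}$ is idempotent.
\item Take $(\mathcal{C}\mathcal{D})^{(1)}=\mathcal{D}^{-1}\mathcal{C}^{(1)}=(10,\ 0.9)^T$, the choice used for \eqref{Proof3.4}. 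The matrix in condition 1 is then $0.01\,(10,0.9)^T(1,100)$, idempotent since $0.01(10+90)=1$.
\item All rank conditions hold, and $\Vert\mathcal{A}^{(1)}\Vert\Vert\mathcal{E}\Vert<1$ in both the spectral and Frobenius norms.
\end{itemize}
Nevertheless,
\[
\mathcal{X}_A=\begin{pmatrix}1&10\\0&0\end{pmatrix},\qquad \mathcal{X}_D=\begin{pmatrix}10&100\\0&0\end{pmatrix}=10\,\mathcal{X}_A .
\]
The relative error is therefore $9$ in every norm, while $1/(1-\Vert\mathcal{E}\mathcal{A}^{(1)}\Vert)+1=1/0.91+1\approx 2.1$. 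This example also defeats your first route: $\mathcal{D}\mathcal{D}^{(1)}=I$ but $\mathcal{C}^{(1)}\mathcal{C}\neq I$, so the projector cannot be absorbed.

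What is true is the following. Since $\mathcal{X}_A$ and $\mathcal{X}_D$ have the same range and null space, $\mathcal{X}_D*_s\mathcal{D}*_t\mathcal{X}_A=\mathcal{X}_A$ and $\mathcal{X}_D*_s\mathcal{A}*_t\mathcal{X}_A=\mathcal{X}_D$. Hence $\mathcal{X}_D=\mathcal{X}_A*_s(\mathcal{I}+\mathcal{E}*_t\mathcal{X}_A)^{-1}$, and the claimed bound holds with $\Vert\mathcal{E}*_t\mathcal{X}_A\Vert$ in place of $\Vert\mathcal{E}*_t\mathcal{A}^{(1)}\Vert$. In the example that norm is about $0.90$, consistent with the error $9$.
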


\section{Conclusion}
Three perturbation problems involving tensors are examined.
A small perturbation $\mathcal{E}$ in $\mathcal{A}$ is considered, such that $\|\mathcal{A}^{(1)}\|\|\mathcal{E}\| < 1$ in all cases.
The research question aims to understand how $\mathcal{E}$ impacts the computation of generalized inverses of $\mathcal{A}$.

The first research task investigates relationships between $ (\mathcal{A} + \mathcal{E})^{(1)}$ and $\mathcal{A}^{(1)}$ in terms of the multiplicative factors denoted as
$\bigrho =(\mathcal{I} + \mathcal{E}*_t \mathcal{A}^{(1)})^{-1}$ and $\bigdelta =(\mathcal{I} + \mathcal{A}^{(1)}*_s\mathcal{E})^{-1}$.
The objective is to explore the relationships represented by the equation $(\mathcal{A} + \mathcal{E})^{(1)}=\mathcal{A}^{(1)} *_s\bigrho=\bigdelta *_t\mathcal{A}^{(1)}$ under certain conditions.

We consider a small perturbation satisfying some specified requirements in the second research task.
The research task aims to investigate relationship between $ (\mathcal{A} + \mathcal{E})^{(2)}$ and $\mathcal{A}^{(2)}$, which is defined using the multiplicative factors
$\bigDdelta   =(\mathcal{I} + \mathcal{A}^{(2)}*_s\mathcal{E})^{-1}$ and $\bigDrho  =(\mathcal{I} + \mathcal{E}*_t \mathcal{A}^{(2)})^{-1}$.
The goal is to explore the relationship of the form $ (\mathcal{A} + \mathcal{E})^{(2)}=\mathcal{A}^{(2)} *_s\bigDrho =\bigDdelta   *_t\mathcal{A}^{(2)}$.

The third research task aims to generalize the previous two tasks regarding outer inverses with determined ranges or/and null spaces.

The relationship between $\mathcal{E}$ and generalized inverses is a multifaceted challenge.
Our research has specified particular conditions that allow us to evaluate this relationship effectively through various expressions and inequalities.
The effectiveness of perturbation analysis often depends on the validity of its basic assumptions, such as the small size of perturbations.
If these assumptions are not met, the conclusions drawn may be misleading.
Different constraints will inevitably yield diverse results, emphasizing the importance of carefully considering restrictions and choices.

\noindent {\bf Funding.}
The first author is supported by the National Natural Science Foundation of China (NSFC) (No. 11901079), and China Postdoctoral Science Foundation (No. 2021M700751), and the Scientific and Technological Research Program Foundation of Jilin Province, (No. JJKH20190690KJ; No. JJKH20220091KJ; No. JJKH20250851KJ).

The third and fourth authors are supported by the  Ministry of Science, Technological Development and Innovation, Republic of Serbia (No. 451-03-137/2025-03/ 200124).

Predrag Stanimirovi\'c is supported by the Ministry of Science and Technology of China under grant  H20240841.

\medskip
\noindent {\bf Conflict of Interest.}
The authors declare that they have no potential conflict of interest.

\section*{Data Availability Statement}
Data available on request from the authors.
%\medskip
%\noindent {\bf ORCID.}
%Daochang Zhang \orcidC \href{https://orcid.org/0000-0002-9648-362X}{ \hspace{2mm}\textcolor{lightblue}{https://orcid.org/0000-0002-9648-362X}} \\
%Jingqian Li \orcidC \href{https://orcid.org/0009-0006-3763-7867}{ \hspace{2mm}\textcolor{lightblue}{https://orcid.org/0009-0006-3763-7867}} \\
%Dijana Mosi\'c \orcidC \href{https://orcid.org/0000-0002-3255-9322}{ \hspace{2mm}\textcolor{lightblue}{https://orcid.org/0000-0001-6104-5171}} \\
%Predrag S. Stanimirovi\'c \orcidA \href{https://orcid.org/0000-0003-0655-3741}{ \hspace{2mm}\textcolor{lightblue}{ https://orcid.org/0000-0003-0655-3741}}\\

\bibliographystyle{abbrv}

\end{document}